\documentclass{article}

\usepackage{amssymb}

\usepackage{amsmath,amssymb,amsfonts,amsthm,amsxtra }

\usepackage[applemac]{inputenc}

\usepackage[english]{babel}
\usepackage[T1]{fontenc}
\usepackage[all]{xy}

\usepackage{enumitem} 

\usepackage{amssymb}
\usepackage{amsmath}
\usepackage{amsthm}
\theoremstyle{plain}
\newtheorem{thm}{Theorem}[section]
\newtheorem{prop}[thm]{Proposition}

\newtheorem{nota}[thm]{Notation}
\newtheorem{lem}[thm]{Lemma}
\newtheorem{coro}[thm]{Corollary}
\newtheorem{defn}[thm]{Definition}

\newtheorem{rmq}[thm]{Remark} 
\numberwithin{equation}{section}  
\newcommand{\prin}{\operatorname{prin}}

\newcommand{\Stieffel}{\operatorname{Stiefel}}

\newcommand{\V}{\operatorname{V}}
\textwidth 6.5in
\textheight 8.5in
\topmargin 0.5in
\headheight 0in
\oddsidemargin 0in
\evensidemargin 0in
\parskip 0.5\baselineskip
\parindent 0.5pt



\newtheorem{theorem}{Theorem}

\title{\textbf{Rational homotopy of the space of immersions between manifolds}}

\author{ Abdoulkader Yacouba Barma}
\begin{document}
\maketitle
\begin{abstract}
In this paper we study the rational  homotopy of the space of immersions, $Imm\left(M,N\right)$,  of a manifold $M$ of dimension $m\geq 0$ into a manifold  $N$ of dimension $m+k$, with $k\geq 2$. In the special case  when $N=\mathbb{R}^{m+k}$ and $k$ is odd we prove that each connected component of $Imm\left(M,\mathbb{R}^{m+k}\right)$ has the rational homotopy type of product of Eilenberg Mac Lane space.  We give an explicit  description of each connected component and prove that it only depends on $m$, $k$ and the rational Betti numbers of $M$. \\
For a more general manifold $N$, we prove that the path connected of $Imm\left(M,N\right)$ has the rational homotopy type of some component of an explicit mapping space when some Pontryagin classes vanishes.
\end{abstract}

\section{Introduction}
Throughout the paper $M$ and $N$ are  differentiable manifolds of dimension $m$ and $m+k$ respectively, $m,k\geq 0$. We assume that $M$ and $N$  are simply connected and of finite type. An immersion from $M$ to $N$ is a map $f\colon M\longrightarrow N$ of class $C^1$ such that for all $x$ in $M$ the derivative, $d_xf$ is injective. It will be denoted $f\colon M\looparrowright N$. This paper is about the study of the \emph{space of immersions} $Imm\left(M,N\right)$ defined as the set of all immersions  $f\colon M\looparrowright N$ equipped with the weak $C^1$-topology (see \cite[Chapter~2]{hirsh1}).
In \cite{hirsh, smale2}, M. Hirsch and S. Smale construct a homotopical description of $Imm\left(M,N\right)$ as the space of sections of some explicit bundle (see Theorem~\ref{sht} below). In this paper we study the rational homotopy type of $Imm\left(M,N\right)$. 
  Our main result is the following.
\begin{theorem}[Theorem~\ref{casodd}]
\label{theoremB}
Let $M$ be a simply connected manifold of finite type of dimension $m\geq 0$. If $k\geq 3$ is an  odd integer,
then each connected component of $Imm\left(M,\mathbb{R}^{m+k}\right)$ has the rational homotopy type of a product of Eilenberg-Mac Lane spaces which only depends on $m$, $k$ and the rational Betti numbers of $M$.

\end{theorem}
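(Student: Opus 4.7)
The plan is to apply the Smale-Hirsch theorem to identify $\Imm(M, \mathbb{R}^{m+k})$ with a section space, analyze the rational homotopy type of the fiber using the oddness of $k$, and then transfer the resulting product-of-Eilenberg-MacLane structure to the section space via a Sullivan-model argument.

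By Theorem~\ref{sht}, $\Imm(M, \mathbb{R}^{m+k})$ is weakly equivalent to the space of sections of a fiber bundle $p \colon E \to M$ associated to the frame bundle of $TM$, with fiber the Stiefel manifold $\V_m(\mathbb{R}^{m+k}) = \SO(m+k)/\SO(k)$. For $k \geq 3$ odd, this fiber is $(k-1)$-connected. Analyzing the fibration $\SO(k) \to \SO(m+k) \to \V_m(\mathbb{R}^{m+k})$ together with the classical computation of $H^*(\SO(n); \mathbb{Q})$ --- with a short case split on the parity of $m$ --- one sees that $H^*(\V_m(\mathbb{R}^{m+k}); \mathbb{Q})$ is always an exterior algebra on generators in odd degrees $n_1, \ldots, n_r$ depending only on $m$ and $k$. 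By Serre's theorem that the rationalization of $S^{2n+1}$ is $K(\mathbb{Q}, 2n+1)$, this forces $\V_m(\mathbb{R}^{m+k}) \simeq_\mathbb{Q} \prod_i K(\mathbb{Q}, n_i)$.

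The main step is then to transfer this product-of-EML structure from the fiber to the section space. For this I would use a Haefliger-type model of the section space of a fibration whose fiber has minimal Sullivan model $(\Lambda V, 0)$ with $V$ concentrated in odd degrees. Since the bundle $E \to M$ is classified by the tangent map $\tau_M \colon M \to BO(m)$, and since $H^*(BO(m); \mathbb{Q})$ is concentrated in even degrees (generated by Pontryagin classes and, for $m$ even, the Euler class), a degree-parity argument should force the twisting differential in the Haefliger model to vanish. Each connected component of the section space would then have a minimal Sullivan model of the form $(\Lambda W, 0)$, and hence be a product of Eilenberg-MacLane spaces. Unwinding the construction, each $K(\mathbb{Q}, n_i)$ factor of the fiber contributes $\prod_{j=0}^{n_i} K(H^{n_i - j}(M; \mathbb{Q}), j)$ to the section space, so the multiplicities of the EML factors depend only on the $n_i$ (hence on $m, k$) and on $\dim_\mathbb{Q} H^*(M; \mathbb{Q})$, i.e.\ the rational Betti numbers of $M$.

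The hardest part will be the twisting analysis in the third step. The oddness of $k$ is essential: all generators of $H^*(\V_m(\mathbb{R}^{m+k}); \mathbb{Q})$ lie in odd degrees, so they cannot pair with the even-degree classes in $H^*(BO(m); \mathbb{Q})$ to produce a non-trivial differential in the Haefliger model. Were $k$ even, an Euler-class generator in the fiber would interact non-trivially with Pontryagin classes of $TM$ and destroy the product-of-EML structure. A secondary point is to show that the conclusion holds for \emph{every} connected component of $\Imm(M, \mathbb{R}^{m+k})$, not just the one containing a chosen basepoint; a change-of-basepoint argument, again exploiting the oddness of the fiber's generators, should yield that all components share the same rational homotopy type.
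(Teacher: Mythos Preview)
Your overall architecture matches the paper's: Smale--Hirsch reduces to a section space of a bundle with fiber $V_m(\mathbb{R}^{m+k})$, for $k$ odd the fiber is rationally a product of odd Eilenberg--Mac Lane spaces, and if the bundle is rationally trivial each component of the section space is a product of Eilenberg--Mac Lane spaces read off from $m$, $k$, and the Betti numbers of $M$.

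The gap is in your third step. Your ``degree-parity argument'' does not force the twisting to vanish. In the relative Sullivan model of the Stiefel bundle over $M$ (Theorem~\ref{modelfibredestiefel} with $N=\mathbb{R}^{m+k}$), the fiber generator $\bar{a}_\ell$ has odd degree $4\ell-1$ and
\[
D\bar{a}_\ell \;=\; -\,\tilde{p}_\ell(\tau_M)\;\in\;H^{4\ell}(M;\mathbb{Q}).
\]
This target degree is \emph{even}, exactly where the (dual) Pontryagin classes of $\tau_M$ live; nothing about parity, or about $H^*(BO(m);\mathbb{Q})$ being concentrated in even degrees, makes this class vanish. So the bundle is \emph{not} rationally trivial for formal reasons.

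What actually kills the twisting is the existence of an immersion. If $f\colon M\looparrowright\mathbb{R}^{m+k}$ exists, then $\tau_M\oplus\nu_f$ is trivial, so $\tilde{p}_\ell(\tau_M)=p_\ell(\nu_f)$; since $\operatorname{rank}\nu_f=k$, this vanishes for $\ell>\lfloor(k-1)/2\rfloor$, which for $k$ odd covers every $\ell\ge\lceil k/2\rceil$ appearing in the model. This is Lemma~\ref{federico}, and it is precisely the ``surprising'' point flagged after the statement of Theorem~\ref{theoremB}: the Pontryagin data one expects to see is erased by the hypothesis that $Imm(M,\mathbb{R}^{m+k})$ is nonempty. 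Once you insert this step, the bundle becomes rationally trivial (Corollary~\ref{corodecoro}), the section space is rationally $Map(M,V_m(\mathbb{R}^{m+k}))$, and your remaining arguments (including the observation that all components agree because the target is a rational $H$-space) go through as in the paper.
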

This theorem is quite surprising. Indeed Smale-Hirsch describe the space $Imm\left(M,\mathbb{R}^{m+k}\right)$ as the space of section of some bundle depending on the tangent bundle of $M$. The rational information on this tangent bundle is encoded by the Pontryagin classes and therefore one could except that the rational homotopy of  $Imm\left(M,\mathbb{R}^{m+k}\right)$ depends on those Pontryagin classes. This is not the case, the reason being that \emph{if there exist an immersion} then the bundle of Smale-Hirsch is always rationally trivial (by Corollary~\ref{corodecoro} and Lemma~\ref{federico}).\\
When $k$ is even the result is slightly different. In this case we need some extra hypothesis  to  get an easy description of  the connected component of $Imm\left(M,\mathbb{R}^{m+k}\right)$. In order to state this precisely, recall that for a real vector bundle $\xi$ over a space $B$,  the dual Pontryagin  classes of $\xi$ are the unique elements $1, \tilde{p}_1\left(\xi\right) \cdots$,  with $\tilde{p}_i\left(\xi\right)\in H^{4i}\left(B,\mathbb{Q}\right) \forall i\geq 1$, satisfying:
 \[
 \left(1+p_1\left(\xi\right)+\cdots+p_{\lfloor\frac{m-1}{2}\rfloor}\left(\xi\right)\right)\left(1+\tilde{p}_1\left(\xi\right)+\cdots\right)=1
 \] 
 where $p_1\left(\xi\right),\cdots, p_{\lfloor\frac{m-1}{2}\rfloor}\left(\xi\right)$ are the ordinary Pontryagin classes of $\xi$. In other words $\tilde{p}_i\left(\xi\right)= {p}_i\left(-\xi\right)$ where $-\xi$ is the opposite virtual bundle of $\xi$. Also we denote by $e\left(\xi\right)$ the Euler class of $\xi$.\\
Here we use the notation $\lfloor\;\rfloor$ and $\lceil \;\rceil$ for the floor and ceil operations: 
\[ \lfloor{x}\rfloor=\max\{n\in \mathbb{Z}\mid n\leq x \} \text{ and }  \lceil{x}\rceil=\min\{n\in \mathbb{Z}\mid x\leq n \}
. \]We prove the following result
\begin{theorem}[Theorem~\ref{caseven}]
\label{theoremc}
Let $M$ be a manifold of dimension $m\geq0$, simply connected and of finite type, and let $k\geq 2$ be an even integer. Assume that the following two statement hold
\begin{itemize}
\item $e\left(\tau_M\right)=0,$ and
\item
$\tilde{p}_{\frac{k}{2}}\left(\tau_M\right)=0$. \end{itemize}
Then each connected component of $Imm\left(M,\mathbb{R}^{m+k}\right)$ has the rational homotopy type of a product of  some component of  the mapping space of $M$ into $S^k$,$Map\left(M,S^k\right)$, with a product of Eilenberg-Mac Lane spaces which only depends on $m$, $k$ and the rationals Betti numbers of $M$.\\
If moreover $H^k\left(M,\mathbb{Q}\right)=0$ then all the components of $Imm\left(M,\mathbb{R}^{m+k}\right)$ have the same rational homotopy type as well as all the component of $Map\left(M,S^k\right)$.
\end{theorem}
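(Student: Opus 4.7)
The strategy mirrors the proof of Theorem~\ref{theoremB}, the new feature being the even-dimensional sphere factor appearing in the rational model of the Stiefel manifold fibre when $k$ is even. By Smale-Hirsch (Theorem~\ref{sht}), $Imm(M,\mathbb{R}^{m+k})$ is weakly equivalent to the space of sections of a fibre bundle $E\to M$ with fibre the Stiefel manifold $V:=V_m(\mathbb{R}^{m+k})$, associated to the frame bundle of $\tau_M$. Rationally, the proof reduces to a Koszul-Sullivan-style analysis of this section space.

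When $k$ is even, the minimal Sullivan model of $V$ contains one generator $\omega$ in degree $k$ with $d\omega=0$ together with a generator $\eta$ in degree $2k-1$ satisfying $d\eta=\omega^{2}$ (these two encode the rational $S^k$ factor), plus further generators in odd degrees contributing rational odd-sphere, i.e.\ rational Eilenberg-MacLane, factors. In the Koszul-Sullivan model of the associated bundle $E\to M$, the transgressions of these generators pull back to characteristic classes of $\tau_M$: the transgression of $\omega$ is (up to sign) the Euler class $e(\tau_M)$; the transgression of $\eta$ is, modulo $e(\tau_M)^{2}$, the dual Pontryagin class $\tilde p_{k/2}(\tau_M)$; the transgressions of the remaining odd-degree generators are higher dual Pontryagin classes. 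By the argument already used for Theorem~\ref{theoremB}, these last classes vanish rationally whenever an immersion exists, by Corollary~\ref{corodecoro} and Lemma~\ref{federico}.

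The two assumed vanishings $e(\tau_M)=0$ and $\tilde p_{k/2}(\tau_M)=0$ therefore kill the two remaining obstructions, so that $E\to M$ is rationally trivial. Hence $\mathrm{Sec}(E)\simeq_\mathbb{Q} Map(M,V)$; restricting to a connected component and using the product decomposition $V\simeq_\mathbb{Q} S^k\times\prod_i K(\mathbb{Q},n_i)$ displays such a component as a component of $Map(M,S^k)$ times a product of Eilenberg-MacLane spaces whose degrees depend only on $m$, $k$ and the Betti numbers of $M$, as required.

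For the last statement, the hypothesis $H^k(M;\mathbb{Q})=0$ gives $[M,S^k]\otimes\mathbb{Q}=H^k(M;\mathbb{Q})=0$, so rationally $Map(M,S^k)$ has a single component and a fortiori all integral components share the same rational homotopy type (by the standard Haefliger model argument, each component's Sullivan model depends only on a translation class that is now rationally trivial). The product decomposition transfers the same conclusion to $Imm(M,\mathbb{R}^{m+k})$. The main obstacle I anticipate is the middle step: identifying the transgressions in the Koszul-Sullivan model of $E\to M$ precisely, and verifying that, beyond the automatic vanishings provided by the existence of an immersion, the only remaining obstructions are exactly $e(\tau_M)$ and $\tilde p_{k/2}(\tau_M)$ --- that is, checking that the even parity of $k$ introduces no further Pontryagin-type class not already handled by the argument of Theorem~\ref{theoremB}.
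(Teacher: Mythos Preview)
Your overall strategy coincides with the paper's: Smale--Hirsch reduces to a section space, the Stiefel bundle over $M$ is shown to be rationally trivial under the two hypotheses (using Lemma~\ref{federico} for the higher dual Pontryagin classes), whence the section space is rationally a mapping space into $V_m(\mathbb{R}^{m+k})$, and the rational splitting of the Stiefel manifold into $S^k$ times Eilenberg--Mac Lane factors finishes the first part. For the last statement the paper uses exactly the Haefliger-model argument you sketch (this is Lemma~\ref{yvess}).

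There is, however, a concrete inaccuracy in your description of the relative Sullivan model, precisely at the point you flag as the main obstacle. The degree-$k$ generator you call $\omega$ (the paper's $e_k$) has \emph{zero} differential; it does \emph{not} transgress to $e(\tau_M)$ (indeed $e(\tau_M)$ lives in degree $m$, not $k+1$). The Euler class of $M$ enters instead through the generator $\bar e_{m+k}$ of degree $m+k-1$ (present only when $m$ is even), via
\[
D\bar e_{m+k}\;=\;-\,e(\tau_M)\cdot e_k,
\]
a mixed base--fibre term rather than a pure transgression. Likewise your $\eta$ (the paper's $\bar a_{k/2}$) satisfies $D\bar a_{k/2}=-e_k^{2}-\tilde p_{k/2}(\tau_M)$: the square is of the fibre class $e_k$, not of $e(\tau_M)$. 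With these corrections the argument goes through exactly as you outline: the hypotheses $e(\tau_M)=0$ and $\tilde p_{k/2}(\tau_M)=0$, together with Lemma~\ref{federico} for the $\bar a_\ell$ with $\ell>k/2$, force the relative model to split as a tensor product, so the bundle is rationally trivial (Corollary~\ref{corodecoro}). Your plan is sound once the differentials are written correctly; consult Theorem~\ref{modelfibredestiefel} for the precise formulas.
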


Theorem~\ref{theoremB}  and Theorem~\ref{theoremc} imply that the Betti numbers of each connected component  of the space $Imm\left(M, \mathbb{R}
^{m+k}\right)$ have polynomial growth. By combining these results and the main result of \cite{alp} we deduce that, when $k\geq m+1$ and if  $\chi\left(M\right)\leq -2$, then the rational Betti numbers of the space of embeddings  $Emb\left(M,\mathbb{R}^{m+k} \right)$ have exponential growth.\\ When $M$ is a compact manifold without boundary and $k\geq \frac{m}{2}+1$, we give an explicit formulas of the ranks of homotopy groups of the connected components of space of immersions of $M$ into $\mathbb{R}^{m+k}$(see Theorem~\ref{seriepoincare4} and Theorem~\ref{seriepoincare2}). 

We consider now the case of a manifold $N$ more general than $\mathbb{R}^{m+k}$. We will always assume that all the Pontryagin classes of $N$ vanish. We prove that the path connected of $Imm\left(M,N\right)$ has the rational homotopy type of some component of an explicit mapping space when the Pontryagin classes of $N$ vanishes .
Before starting our main  result in this case, we fix some notations. 
\begin{itemize} 
\item
If $X$ and $Y$ and if $\phi\colon X\rightarrow Y$ is a continuous map then we denote by  $Map\left(X,Y;\phi\right)$  the path-connected component containing $\phi$ of  the space of continuous map from $X$ to $Y$, $Map\left(X,Y\right)$ equipped with  the compact-open topology . 
\item
If $f\colon M\looparrowright N$ is an immersion then
  $Imm\left(M,N;f\right)$ is the connected component of  $Imm\left(M,N\right)$ containing $ f$. 
    \end{itemize} 
We prove the following result:
\begin{theorem}[Theorem~\ref{theorem1} ]
  \label{theoremA}
 Suppose that  $k\geq 2$. Let $f\colon M \looparrowright N$ be an immersion. Suppose that all the Pontryagin classes of $N$ vanish. If one of the following statement hold:
 \begin{itemize}
 \item
 $k$ is odd, or
 \item
  the Euler class of $M$ is zero and  $\tilde{p}_{\frac{k}{2}}\left(\tau_M\right)=0$
   \end{itemize}
   then we have the following rational homotopy equivalence:
  \begin{eqnarray*}
 \label{eq:img}
 Imm\left(M,N;f\right) \simeq_{\mathbb{Q}}Map\left(M,V_m\left(\tau_N\right);\phi_f\right),
 \end{eqnarray*}
 where  $\phi_f$ is some continuous  map from $M$ to $V_m\left(\tau_N\right)$ (see Section~\ref{niger} for the definition of the space $V_m\left(\tau_N\right)$).
 \end{theorem}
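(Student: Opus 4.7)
Here is my plan. The strategy is to apply the Smale--Hirsch theorem (Theorem~\ref{sht}) to identify $Imm(M,N;f)$ with a space of sections of an explicit bundle over $M$, and then to reduce the problem to showing that this bundle is rationally fiber--homotopy trivial, in which case its section space is rationally equivalent to a mapping space into the fiber.

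Concretely, Smale--Hirsch gives a weak equivalence between $Imm(M,N;f)$ and the component of $df$ in the space of sections of the bundle $p\colon \operatorname{Mono}(\tau_M,\tau_N)\to M$ whose fiber at $x\in M$ is the space of injective linear maps $T_xM \hookrightarrow TN$. Trivializing $T_xM$ identifies this fiber with the total space of the Stiefel bundle $V_m(\tau_N)\to N$, and intrinsically one has the associated--bundle description
\[
\operatorname{Mono}(\tau_M,\tau_N)\;\cong\;\operatorname{Fr}(\tau_M)\times_{\GL_m}V_m(\tau_N),
\]
where $\GL_m$ acts on $V_m(\tau_N)$ by change of frame inside each Stiefel fiber. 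In particular the classifying map factors through $B\GL_m$, so the obstructions to rational triviality live in Pontryagin--type classes (together with an Euler class in the oriented even case).

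The heart of the argument is to show that, under the stated hypotheses, this bundle is rationally fiber--homotopy equivalent to the product $M\times V_m(\tau_N)\to M$. The vanishing of all Pontryagin classes of $N$ makes $V_m(\tau_N)$ itself split rationally as $N\times V_m(\mathbb{R}^{m+k})$, so the analysis of the $\GL_m$--twist reduces to the universal Stiefel fiber $V_m(\mathbb{R}^{m+k})$, which is exactly the situation treated in Corollary~\ref{corodecoro} and Lemma~\ref{federico} for $N=\mathbb{R}^{m+k}$. When $k$ is odd, $V_m(\mathbb{R}^{m+k})$ is rationally a product of odd spheres on which the $\GL_m$--action is cohomologically trivial, and the obstructions vanish automatically. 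When $k$ is even, the even--sphere factor $S^k\subset V_m(\mathbb{R}^{m+k})$ contributes an obstruction evaluating to $e(\tau_M)$, and a Pontryagin--type obstruction evaluating to $\tilde p_{k/2}(\tau_M)$; these are precisely the two classes killed by the hypotheses.

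Once rational triviality is in hand, sections of the bundle become rationally equivalent to maps into the fiber (via the Haefliger--Sullivan model of a section space, which collapses to the model of the mapping space when the bundle is a product), and matching the component of $df$ gives the component $\phi_f$. The main obstacle is the rational trivialization step: one has to describe the rational characteristic classes of $\operatorname{Fr}(\tau_M)\times_{\GL_m}V_m(\tau_N)$ explicitly in terms of $e(\tau_M)$, the dual Pontryagin classes of $\tau_M$, and the Pontryagin classes of $N$, and check that the hypotheses of the theorem vanish exactly the right classes. This characteristic--class bookkeeping, rather than Smale--Hirsch itself or the passage from sections to maps, is where the real work lies.
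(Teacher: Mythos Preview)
Your overall strategy matches the paper's: apply Smale--Hirsch (Theorem~\ref{sht}) to identify $Imm(M,N;f)$ with a component of the section space of the Stiefel bundle $\Stieffel(\tau_M,\tau_N)\to M$, show this bundle is rationally trivial, and pass from sections to maps into the fibre $V_m(\tau_N)$. You also correctly identify Corollary~\ref{corodecoro} and Lemma~\ref{federico} as the key inputs.

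There is, however, one genuine misstep. You write that ``the vanishing of all Pontryagin classes of $N$ makes $V_m(\tau_N)$ itself split rationally as $N\times V_m(\mathbb{R}^{m+k})$,'' and then use this to reduce the $\GL_m$--twist analysis to the universal fibre $V_m(\mathbb{R}^{m+k})$. That splitting is \emph{not} a consequence of the hypotheses of the theorem: as Corollary~\ref{coro43} shows, one also needs $e(\tau_N)=0$, which is not assumed here (it is only assumed later, in Corollary~\ref{coro1}). So your reduction step, as written, fails.

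Fortunately this detour is unnecessary, and the paper avoids it. Corollary~\ref{corodecoro} is stated for an arbitrary target bundle $\eta$: its hypotheses concern only the dual Pontryagin classes $\tilde p_\ell(\xi)$ and, when $k$ is even, the Euler class $e(\xi)$ of the \emph{source} bundle. No splitting of the fibre $V_m(\eta)$ is required; the relative Sullivan model of Theorem~\ref{modelfibredestiefel} shows directly that the only twisting of the Stiefel bundle over the base comes from $\tilde p_\ell(\tau_M)$ (for $\ell$ in the range $[k/2,(m+k-1)/2]$) and $e(\tau_M)$. Lemma~\ref{federico} then uses the \emph{existence of the immersion} $f$ together with $p_i(\tau_N)=0$ to force $\tilde p_\ell(\tau_M)=0$ for all $\ell$ strictly above $\lfloor(k-1)/2\rfloor$, which disposes of all the relevant classes when $k$ is odd and all but $\tilde p_{k/2}$ when $k$ is even; the remaining hypotheses in the even case are exactly those in the theorem statement. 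So the fix is simply to apply Corollary~\ref{corodecoro} directly to $\xi=\tau_M$, $\eta=\tau_N$, without first trying to untwist $V_m(\tau_N)$ over $N$.

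For the final step, the paper passes from rational triviality of the bundle to triviality of its fibrewise rationalization (Corollary~\ref{corodecoro1}) and then invokes M{\o}ller's theorem to compare section spaces; your Haefliger--Sullivan route would also work, but you should say explicitly how simple connectivity of $V_m(\tau_N)$ (from $k\geq 2$ and $N$ simply connected) is used to make the rationalization arguments go through.
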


 \textbf{Plan of the paper}
\begin{itemize}
\item
In Section~\ref{basicnotions} we recall some definitions and results of rational homotopy theory which are important in this paper.
\item 
In Section~\ref{constructionofstiefel} we introduce the construction of Stiefel associated to a pair of vector bundles and we give some nice properties  of this construction. The importance of this construction is that the space $Imm\left(M,N\right)$ is homotopy equivalent to a space of sections associated to the construction of Stiefel of the tangent bundle $\tau_M$ and $\tau_N$
\item
In Section~\ref{ratmod} we construct a rational model of the construction of Stiefel.
\item 
In Section~\ref{modelimm} using our rational model of the Stiefel construction we prove that it is rationally trivial under good hypothesis, from which we derive easily its space of sections. The  main results  of this paper will follow directly.
\item 
In Section~\ref{computation} we   compute the ranks of the homotopy groups of the connected components of the space of immersions $Imm\left(M,\mathbb{R}^{m+k}\right)$.
\end{itemize}
\textbf{Acknowlement}
The present work is a part of my PhD thesis. I would like to thank my advisor, Pascal Lambrechts, for making it possible throughout his advices and encouragements. I thank Yves F\'elix for suggesting that Lemma~\ref{yvess} is true without a codimension condition. I also thank Thomas Goodwillie and Frederico Cantero Moran for many discussions. 
\section{Rational homotopy theory}
\label{basicnotions}
In this paper  we will use the standard tools and results of rational homotopy theory, following the notation and terminology of \cite{rht}. Thus, the rationalization of a simply connected space $X$ is a rational simply connected space $X_{\mathbb{Q}}$  together with a map $r\colon X\rightarrow X_{\mathbb{Q}}$ such that $\pi_\ast\left(r\right)\otimes\mathbb{Q}$ is an isomorphism.  If $V$ is a graded vector space over $\mathbb{Q}$, we denote by $\Lambda {V}$ the free commutative graded algebra generated by $V$. Recall that  $A_{pl}$  is the Sullivan-de Rham contravariant functor  which associates to each space $X$  a commutative  graded differential algebra (cgda) $A_{pl}\left(X\right)$. A cgda $\left(A,d_A\right)$ is a \emph{model} of $X$ if there exist a zig-zag of quasi-isomorphism between $\left(A,d_A\right)$ and $A_{pl}\left(X\right)$. Similarly a cgda morphism $\left(B,d_B\right)\rightarrow \left(A,d_A\right)$ is a \emph{model} of the map of spaces $f\colon X\rightarrow Y$ if there exist a zig-zag of quasi-isomorphisms between $\left(B,d_B\right)\rightarrow \left(A,d_A\right)$ and $A_{pl}\left(f\right)\colon A_{pl}\left(Y\right)\rightarrow A_{pl}\left(X\right)$.  For each fibre bundle $F\rightarrow E\rightarrow B$ with $E$ and $B$ simply connected and one of the graded spaces $H_\ast\left(B,\mathbb{Q}\right),H_\ast\left(F,\mathbb{Q}\right) $ has finite type, there exist a commutative diagram 
\begin{eqnarray*}
\xymatrix{A_{pl}\left(X\right)\ar[r] &A_{pl}\left(E\right)\ar[r]& A_{pl}\left(F\right)\\
\left(A,d_A\right)\ar@{^{(}->}[r]\ar[u]^{m}&\left(A\otimes \Lambda W,D\right)\ar[r]\ar[u]^{\phi}&\left(\Lambda W,\bar{D}\right)\ar[u]^{\bar{\phi}}
}
\end{eqnarray*}
where $m, \phi$ and $\bar{\phi}$ are quasi-isomorphisms (see \cite[Theorem~15.3]{rht}).
The inclusion $\left(A,d_A\right)\hookrightarrow \left(A\otimes \Lambda W,D\right)$ is called  a \emph{relative Sullivan model} of the fibre bundle $F\rightarrow E\rightarrow B$.
 \begin{defn}
\label{rt}
Let $F\rightarrow E\rightarrow B$ be a fibre bundle between simply connected spaces a relative Sullivan model $
\left(A,d_A\right)\hookrightarrow \left(A\otimes \Lambda W, D\right) 
$. Then $F\rightarrow E\rightarrow B$ is said to be \emph{rationally trivial } if there exist an isomorphism
\[
\psi\colon \left(A,d_A\right)\otimes \left(\Lambda W,\bar{D} \right)\overset{\cong}\longrightarrow  \left(A\otimes \Lambda W, D\right)
\]
such that the following diagram  commute 

\[
 \xymatrix{ \left(A,d_A \right) \ar@{^{(}->}[rr] \ar@{^{(}->}[rd] && \left(A\otimes \Lambda W,D\right)  \\ &\left(A,d_A\right)\otimes \left(\Lambda W,\bar{D}\right).\ar[ru]_{\psi}^{\cong}} 
 \]
\end{defn}
\begin{rmq}
If $ F\rightarrow E\rightarrow B$ is rationally trivial, then its rationalization $ F_{\mathbb{Q}}\rightarrow E_{\mathbb{Q}}\rightarrow B_{\mathbb{Q}}$ is  a trivial fibration.
\end{rmq}

\begin{defn}
Let $F\rightarrow E\overset{\xi}\rightarrow X$ be a fibre bundle.  A \emph{fibrewise rationalization} of $\xi$ is a fibre bundle $F'\rightarrow E'\overset{\xi'}\rightarrow X$ and a map $g\colon E\rightarrow E'$ over $X$ such that $g$ induces a rationalization $F\rightarrow F'$ on the fibres.
\end{defn}
\begin{thm}\cite[Corollary~6.2]{irene}
\label{irene}
Every fibre bundle $F\rightarrow E\overset{\xi}\rightarrow X$ with $F$ simply connected has a fiberwise rationalization which is unique up to a homotopy equivalence.
\end{thm}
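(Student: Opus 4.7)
The plan is to build the fiberwise rationalization by working through the Postnikov tower of the fiber and lifting its invariants stage by stage. Since $F$ is simply connected it admits a convergent Postnikov tower $F\simeq\lim_n F^{(n)}$ in which each stage fits into a principal fibration $K(\pi_n F,n)\to F^{(n)}\to F^{(n-1)}$ classified by a $k$-invariant $k_n\colon F^{(n-1)}\to K(\pi_n F,n+1)$. Rationally, the same data with $\pi_n F$ replaced by $\pi_n F\otimes\mathbb{Q}$ realizes $F_{\mathbb{Q}}$.

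First I would promote this tower to a Moore-Postnikov decomposition of $\xi$: write $E$ as the inverse limit of fibrations $\xi^{(n)}\colon E^{(n)}\to X$ with fiber $F^{(n)}$, where $E^{(n)}\to E^{(n-1)}$ is the pullback along $X$ of a map $\tilde k_n\colon E^{(n-1)}\to K(\pi_n F,n+1)$ that restricts to $k_n$ on each fiber. Suppose inductively I have built a fibration $\xi^{(n-1)}_{\mathbb{Q}}\colon E^{(n-1)}_{\mathbb{Q}}\to X$ with fiber $F^{(n-1)}_{\mathbb{Q}}$ together with a map $E^{(n-1)}\to E^{(n-1)}_{\mathbb{Q}}$ over $X$ rationalizing the fiber. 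Because $K(\pi_nF\otimes\mathbb{Q},n+1)$ is $\mathbb{Q}$-local and the map on fibers is a rational equivalence, obstruction theory provides a lift $\tilde k_n^{\mathbb{Q}}\colon E^{(n-1)}_{\mathbb{Q}}\to K(\pi_nF\otimes\mathbb{Q},n+1)$ of the composite $E^{(n-1)}\xrightarrow{\tilde k_n} K(\pi_nF,n+1)\to K(\pi_nF\otimes\mathbb{Q},n+1)$; pulling back the path-loop fibration along $\tilde k_n^{\mathbb{Q}}$ yields $\xi^{(n)}_{\mathbb{Q}}$ and an extension of the fiberwise rationalization to stage $n$. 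Taking the inverse limit produces the desired fiberwise rationalization $\xi_{\mathbb{Q}}$.

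For uniqueness, given two fiberwise rationalizations $\xi'$ and $\xi''$ of $\xi$, I would construct a fiber-preserving map $E'\to E''$ over $X$ inductively along the Postnikov tower of $F_{\mathbb{Q}}$: at each stage the obstruction to lifting lies in a cohomology group with coefficients in $\pi_nF\otimes\mathbb{Q}$, and the existing rationalizations of $\xi$ provide consistent choices that annihilate it. The resulting map restricts on each fiber to a map between simply connected rational spaces inducing an isomorphism on rational homotopy, hence an isomorphism on homotopy; the long exact sequence of the fibration then forces it to be a weak, and therefore a genuine, homotopy equivalence of fibrations over $X$.

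The main obstacle is coherence of the inductive construction: the lifts $\tilde k_n^{\mathbb{Q}}$ and the comparison maps at each stage are only defined up to homotopy, and one must ensure compatibility across stages so that the inverse limit is a well-defined fibration whose fiber really recovers $F_{\mathbb{Q}}$. Two standard devices resolve this: either work simplicially with fibrant replacements, or, as in the reference of Moller and the source \cite{irene}, lift the classifying map $X\to B\operatorname{Aut}(F)$ through the natural map $B\operatorname{Aut}(F)\to B\operatorname{Aut}(F_{\mathbb{Q}})$, thereby encoding existence and uniqueness in the connectivity of the fiber of this last map.
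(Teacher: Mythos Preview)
The paper does not prove this theorem; it is quoted verbatim as \cite[Corollary~6.2]{irene} and used as a black box. There is therefore no ``paper's own proof'' to compare against.

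That said, your sketch is a reasonable outline of one of the standard constructions of fiberwise rationalization. The Moore--Postnikov approach you describe---inductively rationalizing the $k$-invariants along the tower and passing to the inverse limit---is essentially what underlies the classical treatments. Your identification of the coherence issue is apt, and both remedies you mention (working simplicially, or factoring through $B\operatorname{Aut}(F)\to B\operatorname{Aut}(F_{\mathbb{Q}})$) are the ones used in the literature. One small caveat: in the uniqueness argument you should be a bit more careful about why the obstructions vanish; the point is not merely that the coefficients are rational, but that the two fiberwise rationalizations receive compatible maps from $E$ over $X$, and this compatibility is what pins down the lifts stage by stage up to homotopy. With that clarification the argument goes through.
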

\begin{nota} The fiberwise rationalization of a fibre bundle $F\rightarrow E\overset{\xi}\rightarrow X$ will be noted $F_{\mathbb{Q}}\rightarrow E_{\left(\mathbb{Q}\right)}\overset{\xi_{\left(\mathbb{Q}\right)}}\rightarrow X$.
\end{nota}
From Theorem~\ref{irene} we deduce  the following corollary.
\begin{coro}
\label{corodecoro1}
Let $ F\rightarrow E\overset{\xi}\rightarrow B$  be a fibre bundle.
If its rationalization  $ F_{\mathbb{Q}}\rightarrow E_{\mathbb{Q}}\overset{\xi_{\mathbb{Q}}}\rightarrow B_{\mathbb{Q}}$ is trivial, then its fiberwise rationalization $ F_{\mathbb{Q}}\rightarrow E_{\left(\mathbb{Q}\right)}\overset{\xi_{\left(\mathbb{Q}\right)}}\rightarrow B$ is trivial.
\end{coro}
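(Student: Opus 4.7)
The plan is to exhibit the trivial bundle $B\times F_{\mathbb{Q}}\to B$ as a fibrewise rationalization of $\xi$; then the uniqueness clause of Theorem~\ref{irene} identifies it with $\xi_{(\mathbb{Q})}$ over $B$, which is exactly the desired triviality.

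First, I would translate the hypothesis into an explicit geometric map. The assumed rational triviality of $\xi_{\mathbb{Q}}$ yields, via Definition~\ref{rt}, a homotopy equivalence of bundles over $B_{\mathbb{Q}}$,
\[
\alpha\colon E_{\mathbb{Q}}\overset{\simeq}{\longrightarrow} B_{\mathbb{Q}}\times F_{\mathbb{Q}}.
\]
Composing the rationalization $r_E\colon E\to E_{\mathbb{Q}}$ with $\alpha$ and with the projection $p_F\colon B_{\mathbb{Q}}\times F_{\mathbb{Q}}\to F_{\mathbb{Q}}$, I would set $q:=p_F\circ\alpha\circ r_E\colon E\to F_{\mathbb{Q}}$ and form
\[
g:=(\xi,q)\colon E\longrightarrow B\times F_{\mathbb{Q}},
\]
which is tautologically a bundle morphism over $B$.

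The key verification is that on each fibre $F_b=\xi^{-1}(b)$, the restriction of $g$ is a rationalization of $F_b$ onto $F_{\mathbb{Q}}$. For this, one observes that $r_E|_{F_b}$ factors through the fibre $(E_{\mathbb{Q}})_{b_{\mathbb{Q}}}$ of $\xi_{\mathbb{Q}}$ and coincides there with the rationalization $F_b\to (F_b)_{\mathbb{Q}}$; since $\alpha$ is a bundle equivalence over $B_{\mathbb{Q}}$, it carries $(E_{\mathbb{Q}})_{b_{\mathbb{Q}}}$ equivalently onto $\{b_{\mathbb{Q}}\}\times F_{\mathbb{Q}}$, and then $p_F$ acts as the identity. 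Hence $g|_{F_b}$ is the rationalization of $F_b$, so $g$ exhibits $B\times F_{\mathbb{Q}}\to B$ as a fibrewise rationalization of $\xi$, and Theorem~\ref{irene} concludes.

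The delicate point, and the step I expect to require the most care, is the fibrewiseness of $r_E$: one has to be certain that rationalization of the total space really induces rationalization of every fibre. This is a standard consequence of the Sullivan model for a fibration recalled at the beginning of Section~\ref{basicnotions}, valid because $B$ and $F$ are simply connected of finite type, which is the blanket hypothesis of the paper. With this in hand, the proof is essentially a diagram chase combined with the uniqueness of fibrewise rationalization.
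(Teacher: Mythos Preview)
Your argument is correct and is exactly the kind of argument the paper has in mind: the paper gives no explicit proof, merely asserting that the corollary follows from Theorem~\ref{irene}, and your proof makes this precise by exhibiting $B\times F_{\mathbb{Q}}\to B$ as a fibrewise rationalization of $\xi$ and then invoking the uniqueness clause of that theorem. The only point worth tightening is the one you flag yourself, namely that $r_E$ covers $r_B$ and restricts to a rationalization on fibres; this is indeed standard under the paper's simply-connected finite-type hypotheses.
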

The importance of this corollary will appear in the proof of main result of this paper (Theorem~\ref{theorem1}).

\section{Construction of Stiefel associated to a couple of vector bundles}
\label{constructionofstiefel}
In this section we introduce the construction of Stiefel associated to a couple of vector bundles. This construction comes with two fibre bundles. The importance of this construction in the study of the space of immersions, of a manifold $M$ in a manifold $N$, comes from the Smale-Hirsch theorem, which identifies the space of immersions with the space of sections of one of the fibre bundles of the construction of Stiefel associated to the tangent bundles of $M$ and $N$.
\subsection{Construction}
\label{niger}
In all this paper for an inner product space $W$ and an integer $m\geq  0$, we denoted by $V_m\left(W\right)$  the oriented Stiefel manifold of $m$ oriented frames in $W$ and by$G_m\left(W\right)$  the oriented grassmannian of $m$ planes in $W$. Thus we have a bundle
\begin{eqnarray}
\label{ak1}
GL^+\left(m\right)\longrightarrow V_m\left(W\right)\longrightarrow G_m\left(W\right)
\end{eqnarray}
 Note that $W$ can be an infinite dimensional  vector space, in which case (\ref{ak1}) is the universal principal bundle classifying  principal $GL^+\left(m\right)$-bundle..\\

Let $\eta$ be an oriented vector bundle of rank $m+k$  over $Y$. We have an associated fibre bundle 
\begin{eqnarray}
\label{framedbundle}
 V_m\left(\mathbb{R}^{m+k}\right)\longrightarrow V_m\left(\eta\right)\overset{\pi}\longrightarrow Y,
\end{eqnarray}
with total space
\begin{eqnarray*}
V_m\left(\eta\right)=\{\left(y,v\right)| y\in Y,\text{ and } v=\left(v_1,\cdots,v_m\right) \text{ is an oriented $m$-frame in } E_y\eta \},
\end{eqnarray*}
where $E_y\eta$ is the fibre of $\eta$ over $y\in Y$, and the projection $\pi$ is defined by 
\[
\pi\left(y,v\right)=y.
\]
When $k=0$, then $V_m\left(\mathbb{R}^m\right)=GL^+\left(m\right)$ and $V_m\left(\eta\right)$ is the $GL^+\left(m\right)$-principal bundle 
\[
 GL^+\left(m\right)\longrightarrow \prin\left(\eta\right)\longrightarrow Y
\]
associated to the vector bundle $\eta$.\\
Now we  introduce the \emph{construction of Stiefel} associated to a couple of vector bundle $\xi$ and $\eta$, where 
$\xi\colon \mathbb{R}^m\rightarrow E\xi\rightarrow X$ is an oriented vector bundle of rank $m$ and $\eta\colon \mathbb{R}^m\rightarrow E\eta\rightarrow Y$ is an oriented vector bundle of rank $m+k$.   We consider the $GL^+\left(m\right)$-principal bundle associated to $\xi$
\[
GL^+\left(m\right)\longrightarrow \prin\left(\xi\right)\longrightarrow X
\]
and the $m$-framed bundle (\ref{framedbundle}) associated to $\eta$
 \[
  V_m\left(\mathbb{R}^{m+k}\right)\longrightarrow V_m\left(\eta\right)\overset{\pi}\longrightarrow Y.
 \]
 The group $GL^+\left(m\right)$ acts on $\prin\left(\xi\right)$ on the right  and if $k\geq 1$ it acts  on $V_m\left(\eta\right)$ on the left (for more about this action see \cite[pp~263-264]{hirsh}). These actions induce a diagonal action on the product $ \prin\left(\xi\right)\times V_m\left(\eta\right)$.  Consider  $\prin\left(\xi\right)\times_{GL^+\left(m\right)} V\left(\eta\right)$ the space of orbits of $\prin\left(\xi\right)\times V_m\left(\eta\right)$  under the diagonal action. Set 
\[
\Stieffel\left(\xi,\eta\right):=\prin\left(\xi\right)\times_{GL
^+\left(m\right)} V_m\left(\eta\right).
\]
We have the following diagram of fibre bundles
\begin{eqnarray}
\label{constructiondestiefel}
\xymatrix{
\Stieffel\left(\xi,\eta\right)\ar[r]^-{\pi_2} \ar[d]_{\pi_1}&Y\\
X&
}
\end{eqnarray} 
where the maps $\pi_1$ and $\pi_2$ are defined for all  $\left[\left(x,u\right),\left(y,v\right)\right]$ in $\Stieffel\left(\xi,\eta\right)$ by
\begin{eqnarray*}
\pi_1\left[\left(x,u\right),\left(y,v\right)\right]=x \text{ and }\pi_2\left[\left(x,u\right),\left(y,v\right)\right]=y.
\end{eqnarray*}
Note that $\pi_1$ is a fibre bundle with fibre $V_m\left(\eta\right)$.
\begin{defn}
Let $\xi\colon  \mathbb{R}^m\rightarrow E\xi\rightarrow X$ and  $\eta\colon\mathbb{R}^{m+k}\rightarrow E\eta \rightarrow Y$ be two oriented vector bundles of  rank $m$ and $m+k$ respectively. Then:
\begin{itemize}
\item
the diagram (\ref{constructiondestiefel}) is called the \emph{construction of Stiefel} associated to $\xi$ and $\eta$,
\item
the fibre bundle
\[
V_m\left(\eta\right)\longrightarrow \Stieffel\left(\xi,\eta\right)\overset{\pi_1}\longrightarrow X
\]
is called the \emph{Stiefel bundle} associated to $\xi$ and $\eta$.
\end{itemize}
\end{defn}
In the following proposition, we  give a nice property of the construction of Stiefel which is the property of double functoriality. 
\begin{prop}
\label{prop25}
Let  $ \xi\colon \mathbb{R}^m\longrightarrow E\xi \longrightarrow X$ and $\;\; \eta\colon  \mathbb{R}^{m+k}\longrightarrow E\eta \longrightarrow Y$ be two oriented vector  bundles of rank $m$ and $m+k$ respectively. Let $f\colon X'\longrightarrow X$ and $g\colon Y'\longrightarrow Y$ be two continuous maps and let $f^\ast\xi$ and $g^\ast\eta$  be the pullback of $\xi$ and $\eta$ along $f$ and $g$ respectively. Then we have the following diagram in which (1), (2), (3) are pullbacks diagrams 
\begin{eqnarray}
\label{diagrammedp}
 \def\commutatif{\ar@{}[rd]|{\circlearrowleft}}
   \xymatrix{
 \Stieffel\left(f^\ast\xi,g^\ast\eta\right) \ar@/^2pc/[rrrr]^{\pi'_2}  \ar@/_4pc/[dd]_{\pi'_1} \ar[rr]\ar[d] \ar@{}[rd]^-{p.b.(1)}&& \Stieffel\left(\xi,g^\ast\eta\right) \ar[rr] \ar[d] \ar@{}[rd]^-{p.b.(2)}&&Y'\ar[d]^{g}\\
 \Stieffel\left(f^\ast\xi,\eta\right)  \ar[d]\ar[rr]\ar@{}[rd]^-{p.b.(3)}&& \Stieffel\left(\xi,\eta\right)  \ar[rr]_-{\pi_2}\ar[d]^{{\pi_1}}&& Y \\
    X'\ar[rr]_{f} && X& }
  \end{eqnarray}
where $\pi'_1$ and $\pi'_2 $ are  the  projections of construction of Stiefel associated to $f^\ast\xi$ and $g^\ast\eta$. 
\end{prop}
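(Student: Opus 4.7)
The plan is to verify each of the three squares is a pullback by reducing everything to two elementary naturality facts: the principal bundle construction $\prin(-)$ and the framed bundle $V_m(-)$ are pullback-preserving functors of oriented vector bundles. More precisely, for any continuous maps $f\colon X'\to X$ and $g\colon Y'\to Y$ there are canonical identifications
\[
\prin(f^\ast \xi) \;\cong\; f^\ast \prin(\xi), \qquad V_m(g^\ast \eta)\;\cong\; g^\ast V_m(\eta),
\]
both obtained by inspecting fibrewise (a frame in the fibre of $f^\ast\xi$ at $x'$ is exactly a frame in the fibre of $\xi$ at $f(x')$, and similarly for $\eta$). These are both $GL^+(m)$-equivariant.

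For square (3), I would observe that the projection $\pi_1\colon \Stiefel(\xi,\eta)\to X$ factors as $\prin(\xi)\times_{GL^+(m)}V_m(\eta)\to \prin(\xi)/GL^+(m)=X$, i.e.\ $\Stiefel(\xi,\eta)$ fibres over $X$ via the first factor. Since $GL^+(m)$ acts freely on $\prin(\xi)$, the pullback of the total space along $f$ is computed factor by factor, giving
\[
f^\ast\Stiefel(\xi,\eta)\;=\;f^\ast\prin(\xi)\times_{GL^+(m)}V_m(\eta)\;=\;\prin(f^\ast\xi)\times_{GL^+(m)}V_m(\eta)\;=\;\Stiefel(f^\ast\xi,\eta).
\]
Square (2) is entirely analogous, using $\pi_2\colon \Stiefel(\xi,\eta)\to Y$ which factors through $V_m(\eta)\to Y$, together with $g^\ast V_m(\eta)=V_m(g^\ast\eta)$, giving
\[
g^\ast\Stiefel(\xi,\eta)\;=\;\prin(\xi)\times_{GL^+(m)}g^\ast V_m(\eta)\;=\;\Stiefel(\xi,g^\ast\eta).
\]

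For square (1), the cleanest approach is to apply square (3) a second time to the vector bundle $g^\ast\eta$ on $Y'$ (in place of $\eta$ on $Y$): pulling $\Stiefel(\xi,g^\ast\eta)$ back along $f$ yields $\Stiefel(f^\ast\xi,g^\ast\eta)$. Alternatively one can apply square (2) to $f^\ast\xi$; the two descriptions agree and one finishes by noting that squares (1), (2), (3) compose to exhibit $\Stiefel(f^\ast\xi,g^\ast\eta)$ as the pullback of $\Stiefel(\xi,\eta)$ along the map $(f,g)\colon X'\times Y'\to X\times Y$ lifted appropriately.

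The only real subtlety is the \emph{commutation of the Borel construction $-\times_{GL^+(m)}-$ with pullbacks}. This holds because the $GL^+(m)$-action on $\prin(\xi)$ (respectively its pullback) is free and principal, so in each square one of the two factors of the fibre product is a principal bundle whose pullback is the pullback of the base, and taking the $GL^+(m)$-quotient commutes with such a pullback. I expect this to be the main, and essentially only, technical point; the rest is unwinding definitions.
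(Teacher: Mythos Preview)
Your approach is essentially the same as the paper's. The paper proves (3) exactly as you do, invoking $\prin(f^\ast\xi)\cong f^\ast\prin(\xi)$ and then a standard reference (Husemoller) for the fact that the Borel construction commutes with pullback of the principal factor. For (1) the paper also stacks it on top of (3), observes that the composite rectangle is an instance of (3) with $g^\ast\eta$ in place of $\eta$, and concludes via the pasting lemma for pullbacks; you do the same in substance, but you should make the appeal to pasting explicit rather than saying ``the two descriptions agree'', since what you have shown directly is that the outer rectangle $(1)\!+\!(3)$ is a pullback over $X'\to X$, not that (1) itself is a pullback over $\Stiefel(f^\ast\xi,\eta)\to\Stiefel(\xi,\eta)$.

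The one genuine difference is your treatment of (2). The paper does not argue by symmetry with (3); instead it factors $\pi_2$ through $V_m(\eta)\to Y$ and then runs an explicit universal-property diagram chase, checking existence and uniqueness of the induced map by hand (using freeness of the $GL^+(m)$-action on $V_m(\eta)$ for uniqueness). Your argument that $g^\ast\bigl(\prin(\xi)\times_{GL^+(m)}V_m(\eta)\bigr)\cong \prin(\xi)\times_{GL^+(m)}g^\ast V_m(\eta)$ is correct and more economical, but note that the commutation of the Borel construction with pullback in the \emph{second} variable is not quite the same statement as in the first (here the action on $V_m(\eta)$ is free but $V_m(\eta)\to Y$ is not a principal bundle), so a one-line justification---e.g.\ that $\pi_2$ is induced by a $GL^+(m)$-equivariant map $V_m(\eta)\to Y$ with trivial action on $Y$, and pullbacks of $GL^+(m)$-spaces commute with taking the quotient by a free diagonal action---would be appropriate.
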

The rest of this section is devoted to the proof of this theorem. We start with Diagram~(3), the bottom square of (\ref{diagrammedp}).
   Let 
 \[
GL^+\left(m\right)\longrightarrow \prin\left(f^\ast\xi\right)\longrightarrow X'
\]
be the $GL^+\left(m\right)$-principal bundle associated to $f^\ast\xi$. A typical  element of $\prin\left(f^\ast\xi\right)$ is a couple $\left(x',u\right)$ with $x'\in X'$ and $u$ an $m$-frame of the fibre of $f^\ast\xi$ over $x'$. We identify this fibre with the fibre of $\xi$ over $f\left(x'\right)$ and clearly we have a map $\prin\left(f\right)$ from $\prin\left(f^\ast\xi\right)$ to $\prin\left(\xi\right)$ defined by 
\[
\prin\left(f\right)\left(x',u\right)=\left(f\left(x'\right),u\right).
\]
This map is $GL^+\left(m\right)$-equivariant. Consequently the map
\[
\prin\left(f\right)\times id \colon \prin\left(f^\ast\xi\right)\times V_m\left(\eta\right)\longrightarrow \prin\left(\xi\right) \times V_m\left(\eta\right)
\]
is  $GL^+\left(m\right)$-equivariant with diagonal action.  After pass on the orbits we obtain the map 
\[
\prin\left(f\right)\times_{GL^+\left(m\right)} id \colon \prin\left(f^\ast\xi\right)\times_{GL^+\left(m\right)} V_m\left(\eta\right)\longrightarrow \prin\left(\xi\right) \times_{GL^+\left(m\right)} V_m\left(\eta\right). 
\]
We have the following commutative diagram:
\begin{eqnarray}
\label{diagram1}
\xymatrix{
\prin\left(f^\ast\xi\right)\times_{GL^+\left(m\right)} V_m\left(\eta\right) \ar[rr]^{ \prin\left(f\right)\times_{GL^+\left(m\right)} id} \ar[d]&&\prin\left(\xi\right)\times_{GL^+\left(m\right)} V_m\left(\eta\right)\ar[d]\\
X'\ar[rr]^{f}&&X
}
\end{eqnarray}
that identifies to the diagram (3) of diagram (\ref{diagrammedp}).
\begin{lem} 
\label{prop2}
The diagram~(\ref{diagram1}) is a pullback.
\end{lem}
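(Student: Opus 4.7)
The plan is to reduce the claim to two standard facts: the principal bundle functor commutes with pullbacks, and the associated bundle construction commutes with pullback along the base of the principal bundle.

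First I would recall the canonical homeomorphism $\prin(f^{\ast}\xi) \cong X' \times_X \prin(\xi)$: a point of $\prin(f^{\ast}\xi)$ is a pair $(x',u)$ with $u$ an oriented $m$-frame of $(f^{\ast}\xi)_{x'} = \xi_{f(x')}$, which is the same data as an element $(x', (f(x'),u))$ of the pullback. Under this identification the map $\prin(f)$ corresponds to the second projection and the right $GL^+(m)$-action is intertwined on the nose.

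Next I would construct the comparison map
\[
\Psi \colon \prin(f^{\ast}\xi) \times_{GL^+(m)} V_m(\eta) \longrightarrow X' \times_X \bigl(\prin(\xi) \times_{GL^+(m)} V_m(\eta)\bigr),
\]
sending the orbit class of $((x',u),(y,v))$ to $\bigl(x',\,[(f(x'),u),(y,v)]\bigr)$. This is well defined because the diagonal $GL^+(m)$-action on $\prin(f^{\ast}\xi) \times V_m(\eta)$ is carried by $\prin(f) \times \mathrm{id}$ to the diagonal action on $\prin(\xi) \times V_m(\eta)$, and both components of the image depend only on the orbit. The map fits into diagram~(\ref{diagram1}), so it only remains to check that $\Psi$ is a homeomorphism.

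I would verify this locally. Choose a trivialising open cover $\{U_\alpha\}$ of $X$ for $\xi$; its preimage $\{f^{-1}(U_\alpha)\}$ trivialises $f^{\ast}\xi$ over $X'$. Over each $U_\alpha$, $\prin(\xi)|_{U_\alpha} \cong U_\alpha \times GL^+(m)$, so the Borel construction simplifies to $\bigl(\prin(\xi) \times_{GL^+(m)} V_m(\eta)\bigr)|_{U_\alpha} \cong U_\alpha \times V_m(\eta)$; similarly the left hand side of $\Psi$ becomes $f^{-1}(U_\alpha) \times V_m(\eta)$. In these coordinates $\Psi$ becomes the identity of $f^{-1}(U_\alpha) \times V_m(\eta)$, so it is locally, hence globally, a homeomorphism. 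The only real bookkeeping issue is keeping track of the two sided $GL^+(m)$-action (right on $\prin(\xi)$, left on $V_m(\eta)$), but in any local trivialisation this action collapses and no serious obstacle arises.
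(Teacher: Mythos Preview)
Your proof is correct and follows essentially the same route as the paper: both use $\prin(f^{\ast}\xi)\cong f^{\ast}\prin(\xi)$ and then the compatibility of the associated-bundle construction with pullback along the base. The only difference is that the paper simply cites Husemoller for the second step, whereas you verify it by hand via local trivialisations.
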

\begin{proof} Since
 \[
 \prin\left(f^\ast\xi\right)\cong f^\ast \prin\left(\xi\right),
 \]
 then
  \begin{eqnarray}
  \label{eqq}
 \prin\left(f^\ast\xi\right)\times_{GL^+\left(m\right)}V_m\left(\eta\right)\cong f^\ast \prin\left(\xi\right)\times_{GL^+\left(m\right)}V_m\left(\eta\right)
 \end{eqnarray}
 by \cite[Proposition~6.3, p 47]{huss} we have
 \begin{eqnarray}
 \label{eqqq}
 f^\ast \prin\left(\xi\right)\times_{GL^+\left(m\right)}V_m\left(\eta\right)\cong f^\ast\left(\prin\left(\xi\right)\times_{GL^+\left(m\right)}V_m\left(\eta\right)\right)
   \end{eqnarray}
   by (\ref{eqq} ) and (\ref{eqqq} ) we deduce
   \[
   \prin\left(f^\ast\xi\right)\times_{GL^+\left(m\right)}V_m\left(\eta\right)\cong f^\ast\left( \prin\left(\xi\right)\times_{GL^+\left(m\right)}V_m\left(\eta\right)\right).
   \]
\end{proof}
In the following lemma we prove that the diagram (2) of (\ref{diagrammedp}) is a pullback diagram.
 \begin{lem}
 \label{prop327}
 The following diagram is a pullback 
 \begin{eqnarray}
\label{diagramme325}
\xymatrix{
\prin\left(\xi\right)\times_{GL^+\left(m\right)}V_m\left(g^\ast\eta\right)\ar[rr]^{{id}\times_{GL^+\left(m\right)}V_m\left({g}\right)}\ar[d]_{\pi_2'}&&\prin\left(\xi\right)\times_{GL^+\left(m\right)}V_m\left(\eta\right)\ar[d]^{\pi_2}\\
Y'\ar[rr]_{g}&&Y,
}
\end{eqnarray}
where
\begin{itemize}
\item
for all  $\left[\left(x,u\right),\left(y,v\right)\right]\in \prin\left(\xi\right)\times_{GL^+\left(m\right)} V_m\left(g^\ast\eta\right)$  we have
\[
\pi_2\left[\left(x,u\right),\left(y,v\right)\right]=y
\]
\item
for all  $\left[\left(x,u\right),\left(y,v\right)\right]\in \prin\left(\xi\right)\times_{GL^+\left(m\right)} V_m\left(g^\ast\eta\right)$  we have
\begin{eqnarray*}
\pi'_2\left[\left(x,u\right),\left(y,v\right)\right]&=&y \text{ and }\\
id\times_{GL^+\left(m\right)}\V_m\left({g}\right)\left[\left(x,u\right),\left(y,v\right)\right]&=&\left[\left(x,u\right),\left({g}\left(y\right),v\right)\right].
\end{eqnarray*}
\end{itemize}
  \end{lem}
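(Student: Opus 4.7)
The plan is to exhibit an explicit inverse to the natural map from $\prin(\xi)\times_{GL^+(m)} V_m(g^\ast\eta)$ to the pullback $\bigl(\prin(\xi)\times_{GL^+(m)} V_m(\eta)\bigr)\times_Y Y'$ and verify that the two are mutually inverse. The whole argument rests on one preliminary observation and then a routine verification of well-definedness on $GL^+(m)$-orbits.

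First I would establish the preliminary observation that $V_m(g^\ast\eta)$ is canonically homeomorphic to $g^\ast V_m(\eta)$ as a space over $Y'$. Indeed, for $y'\in Y'$ the fibre $E_{y'}(g^\ast\eta)$ is by definition identified with $E_{g(y')}\eta$, so an oriented $m$-frame $v$ in $E_{y'}(g^\ast\eta)$ is the same datum as an oriented $m$-frame in $E_{g(y')}\eta$. This yields a natural homeomorphism $V_m(g)\colon V_m(g^\ast\eta)\to V_m(\eta)$ fitting in a pullback square over $g$.

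Next I would define the candidate inverse map
\[
\Phi\colon \bigl(\prin(\xi)\times_{GL^+(m)} V_m(\eta)\bigr)\times_Y Y'\longrightarrow \prin(\xi)\times_{GL^+(m)} V_m(g^\ast\eta)
\]
by $\Phi\bigl([(x,u),(y,v)],y'\bigr)=[(x,u),(y',v)]$, where on the right $(y',v)$ is interpreted as an element of $V_m(g^\ast\eta)$ via the identification above (and this makes sense precisely because $g(y')=y$). To see this is well defined on orbits, observe that if $(x,u)$ is replaced by $(x,uA)$ and $(y,v)$ by $(y,A^{-1}v)$ for $A\in GL^+(m)$, then on the $V_m(g^\ast\eta)$ side $(y',v)$ is correspondingly replaced by $(y',A^{-1}v)$, giving the same $GL^+(m)$-orbit; continuity of $\Phi$ follows from continuity of $V_m(g)$ and the quotient topology.

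Finally I would check that $\Phi$ is a two-sided inverse of the natural map $\Psi=(id\times_{GL^+(m)} V_m(g),\pi'_2)$. The composition $\Psi\circ\Phi$ sends $([(x,u),(y,v)],y')$ to $([(x,u),(g(y'),v)],y')=([(x,u),(y,v)],y')$, and $\Phi\circ\Psi$ sends $[(x,u),(y',v)]$ to $[(x,u),(y',v)]$, as required. There is no real obstacle in the proof; the only mildly delicate point is keeping track of the identification between fibres of $g^\ast\eta$ and $\eta$ when descending to orbits, which is exactly the content of Step 1.
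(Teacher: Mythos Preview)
Your proof is correct and is somewhat more streamlined than the paper's. Both arguments rest on the same key observation, namely that $V_m(g^\ast\eta)$ sits over $Y'$ as the pullback $g^\ast V_m(\eta)$; the paper records this as ``diagram~(II) is a pullback'' with a reference, while you state it as your preliminary observation. From there the two proofs diverge in presentation: the paper verifies the universal property of the pullback directly, taking an arbitrary test space $Z$ with compatible maps $\alpha\colon Z\to Y'$ and $\beta\colon Z\to \prin(\xi)\times_{GL^+(m)}V_m(\eta)$, constructing the induced map $\Phi\colon Z\to \prin(\xi)\times_{GL^+(m)}V_m(g^\ast\eta)$, and checking existence and uniqueness (the latter using freeness of the $GL^+(m)$-action on frames). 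You instead build an explicit inverse to the comparison map $\Psi$ into the set-theoretic pullback and check the two composites are identities. Your route is a bit shorter and avoids the slightly awkward notation $\beta=(\beta_1,\beta_2)$ for a map into a quotient; the paper's route makes the role of freeness of the action more visible. The only place where you are as informal as the paper is the continuity of $\Phi$: to make this fully rigorous you would observe that the quotient map $\prin(\xi)\times V_m(\eta)\to \prin(\xi)\times_{GL^+(m)}V_m(\eta)$ is open (being a quotient by a free group action), so its product with $\mathrm{id}_{Y'}$ is still a quotient map, and your formula on representatives then descends continuously.
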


    \begin{proof}
  Consider  the following diagram
  \[
  \xymatrix{
  \prin\left(\xi\right)\times_{GL^+\left(m\right)}V_m\left(g^\ast\eta\right)\ar[rr]^{{id}\times_{GL^+\left(m\right)}\V_m\left({g}\right)}\ar[d]_{p_1'}\ar@/_3pc/[dd]_{\pi'_2}\ar@{}[rrd]^{(\MakeUppercase{\romannumeral 1})}&&\prin\left(\xi\right)\times_{GL^+\left(m\right)}V_m\left(\eta\right)\ar[d]^{p_1}\ar@/^3pc/[dd]^{\pi_2}\\
\V_m\left(g^\ast\eta\right)\ar[rr]^{V_m\left(g\right)}\ar[d]_{p_2'}\ar@{}[rrd]^{(\MakeUppercase{\romannumeral 2})}&&V_m\left(\eta\right)\ar[d]^{p_2}\\  
Y'\ar[rr]_{g}&&Y
}
  \]
   The diagram $(\MakeUppercase{\romannumeral 2})$ is a pullback (see \cite[Section~1.5.3]{piccione} for the proof). To prove that the big diagram is a pullback, consider  the space $Z$ and two continuous maps
\[
 \alpha\colon Z\longrightarrow Y', \text{ and  }
 \beta=\left(\beta_1,\beta_2\right)\colon Z\longrightarrow \prin\left(\xi\right)\times_{GL^+\left(m\right)}V_m\left(\eta\right)
 \] such that 
 \[
 \pi_2\circ\beta=g\circ\alpha.
 \]
 We will prove that there exist an unique map 
 \[
\Phi\colon Z\longrightarrow \prin\left(\xi\right)\times_{GL^+\left(m\right)}V_m\left(g^\ast\eta\right) 
\]
such that
\[
\pi'_2\circ\Phi=\alpha \text{ and }{id}\times_{GL^+\left(m\right)}V_m\left({g}\right)\circ \Phi=\beta.
\]
\textbf{Existence}: Since $\pi_2=p_2\circ p_1$, then from 
 \[
 \pi_2\circ\beta=g\circ\alpha,
 \]
 we have
 \begin{eqnarray}
 \label{22}
 p_2\circ \left(p_1\circ\beta\right)=g\circ\alpha.
 \end{eqnarray}
 Because $(\MakeUppercase{\romannumeral 2})$ is a pullback, the equation (\ref{22}) involves the existence of an unique map
\[
\Phi_2\colon Z\longrightarrow V_m\left(g^\ast\eta\right)
\]
such that
\[
p'_2\circ \Phi_2=\alpha \text{ et } V_m\left(g\right)\circ \Phi_2=p_1\circ \beta.
\]
Let $\Phi=\left(\beta_1,\Phi_2\right)$. The map $\Phi$ is well defined and for all  $z\in Z$ we have on the one hand:
\begin{eqnarray*}
\pi'_2\circ\Phi\left(z\right)=p_2'\circ p_1'\circ\Phi\left(z\right)&=&p'_2\circ p_1'\left[\beta_1\left(z\right),\Phi_2\left(z\right)\right]\\
&=&p_2'\circ \Phi_2\left(z\right)\\
&=&\alpha\left(z\right),
\end{eqnarray*}
and on the other hand
 \begin{eqnarray*}
{id}\times_{GL^+\left(m\right)}V_m\left({g}\right)\circ \Phi\left(z\right)&=&{id}\times_{GL^+\left(m\right)}V_m\left({g}\right)\left[\beta_1\left(z\right),\Phi_2\left(z\right)\right]\\
&=&\left[\beta_1\left(z\right),V_m\left(g\right)\circ \Phi_2\left(z\right)\right].
\end{eqnarray*}
 Since $V_m\left(g\right)\circ \Phi_2=p_1\circ \beta$,  we have
\[V_m\left(g\right)\circ\Phi_2\left(z\right)= \beta_2\left(z\right).\]
As conclusion we have:
\[
\pi'_2\circ\Phi=\alpha \text{ and }{id}\times_{GL^+\left(m\right)}\V_m\left({g}\right)\circ \Phi=\beta.
\]
\textbf{Unicity}: Suppose that there exist another map 
\[
\Phi'=\left(\Phi_1',\Phi_2'\right)\colon  Z\longrightarrow \prin\left(\xi\right)\times_{GL^+\left(m\right)}V_m\left(g^\ast\eta\right) 
\]
such that
\[
\pi'_2\circ\Phi'=\alpha \text{ and }{id}\times_{GL^+\left(m\right)}V_m\left({g}\right)\circ \Phi'=\beta,
\]
then for all $z\in Z$ we have:
\begin{eqnarray}
\label{00}
\pi'_2\circ\Phi'\left(z\right)=p_2'\circ p_1'\circ\Phi'\left(z\right)&=&p'_2\circ p_1'\left[\Phi_1'\left(z\right),\Phi'_2\left(z\right)\right]\nonumber\\
&=&p_2'\circ\Phi'_2\left(z\right)=\alpha\left(z\right),
\end{eqnarray}
and
\begin{eqnarray}
\label{99}
&&V_m\left(g\right)\circ p'_1\circ \Phi'\left(z\right)\nonumber\\
&=&V_m\left(g\right)\circ\Phi'_2\left(z\right)=\beta_2\left(z\right) .
\end{eqnarray}
Since $(\MakeUppercase{\romannumeral 2})$ is a pullback, from (\ref{00}) and (\ref{99}) we have:
\[
\Phi_2=\Phi_2'. 
\]
It remains to prove that 
\[
\Phi'_1=\beta_1. 
\]
Because
\begin{eqnarray*}
{id}\times_{GL^+\left(m\right)}V_m\left({g}\right)\circ \Phi'\left(z\right)&=&{id}\times_{GL^+\left(m\right)}V_m\left({g}\right)\left[\Phi'_1\left(z\right),\Phi'_2\left(z\right)\right]\\
&=&\left[\Phi'_1\left(z\right),V_m\left(g\right)\circ \Phi'_2\left(z\right)\right]\\
&=&\left[\Phi'_1\left(z\right),V_m\left(g\right)\circ \Phi_2\left(z\right)\right]\\
&=&\left[\Phi'_1\left(z\right), \beta_2\left(z\right)\right]
\end{eqnarray*}
from
${id}\times_{GL^+\left(m\right)}V_m\left({g}\right)\circ \Phi=\beta$
we have that 
\[
\left[\Phi'_1\left(z\right), \beta_2\left(z\right)\right]=\left[ \beta_1\left(z\right),\beta_2\left(z\right)\right],
\]
therefore there exist  $g\in GL^+\left(m\right)$ such that
\[
\left(\Phi'_1\left(z\right), \beta_2\left(z\right)\right)=\left( \beta_1\left(z\right)g,g^{-1}\beta_2\left(z\right)\right).
\]
Since the action of $GL^+\left(m\right)$ sur $V_m\left(\eta\right)$ is free, we have:
\[
\beta_2\left(z\right)=g^{-1}\beta_2\left(z\right)\Rightarrow g=1
\] 
thus,
\[
\left(\Phi'_1\left(z\right), \beta_2\left(z\right)\right)=\left( \beta_1\left(z\right),\beta_2\left(z\right)\right).
\]
\end{proof}
\begin{lem}
\label{lem7}
The diagram
 \begin{eqnarray}
 \xymatrix{
 \Stieffel\left(f^\ast\xi,g^\ast\eta\right)\ar@{}[rd]^-{pb}\ar[r]\ar[d]&\Stieffel\left(\xi,g^\ast\eta\right)\ar[d]\\
 \Stieffel\left(f^\ast\xi,\eta\right)\ar[r]&\Stieffel\left(\xi,\eta\right)
 }
 \end{eqnarray} 
 is a pullback.
 \end{lem}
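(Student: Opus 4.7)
The plan is to prove this via the pullback pasting lemma, using Lemma~\ref{prop327} twice. I would extend the given square to the right by adjoining the pullback squares from Lemma~\ref{prop327} as follows:
\[
\xymatrix{
\Stieffel\left(f^\ast\xi,g^\ast\eta\right) \ar[r]\ar[d] & \Stieffel\left(\xi,g^\ast\eta\right)\ar[d]\ar[r] & Y'\ar[d]^g\\
\Stieffel\left(f^\ast\xi,\eta\right)\ar[r] & \Stieffel\left(\xi,\eta\right)\ar[r]_-{\pi_2} & Y.
}
\]

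First, I would observe that the right-hand square is a pullback: this is exactly Lemma~\ref{prop327} applied to the pair $\left(\xi,\eta\right)$ and the map $g\colon Y'\rightarrow Y$, after identifying the right vertical map with $\pi_2$.

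Next, I would check that the outer rectangle is also a pullback. The crucial observation is that the horizontal maps in the square we want to study (coming from the functoriality in the first variable $\xi\mapsto f^\ast\xi$) only affect the principal-bundle coordinate and leave the $Y'$-coordinate untouched. Hence the composite along the top row is precisely $\pi'_2\colon \Stieffel\left(f^\ast\xi,g^\ast\eta\right)\rightarrow Y'$, and the composite along the bottom row is $\pi_2\colon \Stieffel\left(f^\ast\xi,\eta\right)\rightarrow Y$. So the outer rectangle is exactly the square produced by Lemma~\ref{prop327} applied to the pair $\left(f^\ast\xi,\eta\right)$ and the map $g$, which is therefore a pullback.

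Finally, I would invoke the pasting lemma for pullbacks: since the right square and the outer rectangle are pullbacks, the left square (which is the square in question) is a pullback. I do not expect any genuine obstacle; the only point that requires a small verification is the identification of the composite horizontal maps with $\pi'_2$ and $\pi_2$, which follows directly from the definitions of the functoriality maps in the construction of Stiefel.
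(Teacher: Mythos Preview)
Your proof is correct, but it takes the mirror-image route to the paper's. Both arguments use the pullback pasting lemma, but the paper extends the square \emph{downward} rather than to the right: it adjoins the projections $\pi_1$ and $\pi'_1$ to $X$ and $X'$ along $f$, obtaining
\[
\xymatrix{
\Stieffel\left(f^\ast\xi,g^\ast\eta\right)\ar[r]\ar[d]&\Stieffel\left(\xi,g^\ast\eta\right)\ar[d]\\
\Stieffel\left(f^\ast\xi,\eta\right)\ar[r]\ar[d]&\Stieffel\left(\xi,\eta\right)\ar[d]\\
X'\ar[r]_{f}&X
}
\]
and then invokes Lemma~\ref{prop2} twice (for the bottom square and the outer rectangle) before cancelling. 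You instead extend to the right along $g\colon Y'\to Y$ and invoke Lemma~\ref{prop327} twice. The two approaches are formally dual and equally short; neither buys anything over the other. Your one verification---that the horizontal composites recover $\pi'_2$ and $\pi_2$---is correct for exactly the reason you give, and the paper's version requires the analogous check that the vertical composites recover $\pi'_1$ and $\pi_1$ (which it leaves implicit).
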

 \begin{proof}
 Consider the diagram 
 \begin{eqnarray*}
 \xymatrix{
 \Stieffel\left(f^\ast\xi,g^\ast\eta\right)\ar[r]\ar[d]\ar@{}[rd]^{(1)}&\Stieffel\left(\xi,g^\ast\eta\right)\ar[d]\\
  \Stieffel\left(f^\ast\xi,\eta\right)\ar[r]\ar[d]  \ar@{}[rd]^{(2)}&\Stieffel\left(\xi,\eta\right)\ar[d]\\
X' \ar[r]& X}
 \end{eqnarray*}
By Lemma~\ref{prop2} the part (2) and the big diagram are  pullbacks . The conclusion comes from \cite{maclane} (see also\cite[Lemma~1.1]{Lacks}). 
  \end{proof}

\begin{proof}[Proof of Proposition~\ref{prop25}]
It follows from Lemma~\ref{prop2},~\ref{prop327} and \ref{lem7}.
\end{proof}

\subsection{Universal construction of Stiefel}

In this section we study the construction of Stiefel  associated to the universal vector bundles. \\
 Recall that for $m\geq 0$, the universal vector bundle 
\[
 \mathbb{R}^m\longrightarrow \gamma^m \longrightarrow G_m\left(\mathbb{R}^\infty\right),
\]
is a vector bundle of rank $m$ where
\[
\gamma^m=\{\left(W,x\right) \mid\; W \text{ is an $m$-vector subspace of } \mathbb{R}^\infty,\;\dim\; W=m,\text{ and } x\in W\},
\]
and the projection is the map which send $\left(W,x\right)$ to $W$. We will prove the following 
\begin{prop}
   \label{equivalencebimk} 
    For $m,k\geq 0$ there exists a commutative diagram
       \begin{eqnarray}
\label{diagramme336A}
\xymatrix{
G_m\left(\mathbb{R}^\infty\right)&G_m\left(\mathbb{R}^\infty\right)\times G_{k}\left(\mathbb{R}^{\infty}\right)\ar[l]_-{{pr}_1}\ar[r] ^-{\rho_{m,k}}\ar[d]^{\beta}_{\simeq}&G_{m+k}\left(\mathbb{R}^\infty\right) \\
G_m\left(\mathbb{R}^\infty\right)\ar@{=}[u]&\Stieffel\left(\gamma^m,\gamma^{m+k}\right)\ar[r]_-{\pi_2}\ar[l]^-{\pi_1}&G_{m+k}\left(\mathbb{R}^\infty\right),\ar@{=}[u]}
\end{eqnarray} 
 in which   
 \begin{enumerate}
 \item
the map $\beta$ is a homotopy equivalence,
\item
the bottom line is the construction of Stiefel associated to the universal vector bundles $\gamma^m$ and $\gamma^{m+k}$.
\item
$pr_1$ is the projection on the first factor and $\rho_{m,k}$ sends $\left(W,U\right)$ to $W\oplus U\subset \mathbb{R}^\infty\oplus \mathbb{R}^\infty\cong \mathbb{R}^\infty$
\end{enumerate}
\end{prop}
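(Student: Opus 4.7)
The plan is to define $\beta$ by an explicit formula, verify the two commutativities by inspection, and then prove $\beta$ is a homotopy equivalence by comparing two Serre fibrations with common base $G_m\left(\mathbb{R}^\infty\right)$. Let $\iota_1,\iota_2\colon\mathbb{R}^\infty\hookrightarrow\mathbb{R}^\infty$ denote the two linear inclusions coming from the chosen isomorphism $\mathbb{R}^\infty\oplus\mathbb{R}^\infty\cong\mathbb{R}^\infty$. I would set
\[
\beta\left(W,K\right)=\left[\left(W,u\right),\left(\iota_1\left(W\right)\oplus\iota_2\left(K\right),\iota_1\circ u\right)\right]\in\Stieffel\left(\gamma^m,\gamma^{m+k}\right),
\]
where $u$ is any oriented $m$-frame of $W$. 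A direct computation using the diagonal $GL^+\left(m\right)$-equivalence (replacing $u$ by $u\cdot g$ replaces $\iota_1\circ u$ by $\left(\iota_1\circ u\right)\cdot g$) shows this class is independent of $u$, so $\beta$ is well-defined and continuous. Commutativity is then immediate from the definition: $\pi_1\circ\beta$ returns the first entry $W=pr_1\left(W,K\right)$, while $\pi_2\circ\beta$ returns $\iota_1\left(W\right)\oplus\iota_2\left(K\right)=\rho_{m,k}\left(W,K\right)$.

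To prove $\beta$ is a homotopy equivalence I would compare $pr_1$ (a trivial fibration with fiber $G_k\left(\mathbb{R}^\infty\right)$) with $\pi_1$ (a fibration with fiber $V_m\left(\gamma^{m+k}\right)$), both over $G_m\left(\mathbb{R}^\infty\right)$. Since $\beta$ is a map of fibrations inducing the identity on the common base (which is simply connected), the five-lemma applied to the long exact sequences in homotopy reduces the problem to showing that $\beta$ is a weak equivalence on fibers. Fixing $u$ to pick the canonical representative in each $GL^+\left(m\right)$-orbit, the induced map on the fiber over $W$ reads
\[
G_k\left(\mathbb{R}^\infty\right)\longrightarrow V_m\left(\gamma^{m+k}\right),\qquad K\longmapsto\left(\iota_1\left(W\right)\oplus\iota_2\left(K\right),\iota_1\circ u\right).
\]

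To verify this fiberwise map is a weak equivalence I would introduce the auxiliary fibration $V_m\left(\gamma^{m+k}\right)\rightarrow V_m\left(\mathbb{R}^\infty\right)$, $\left(U,v\right)\mapsto v$, whose base is contractible, so the inclusion of any fiber is a weak equivalence. The fiber over $v_0=\iota_1\circ u$ is the set of $\left(m+k\right)$-planes of $\mathbb{R}^\infty$ containing $\iota_1\left(W\right)$, which is canonically identified with $G_k\left(\mathbb{R}^\infty/\iota_1\left(W\right)\right)$. The restricted $\beta$ lands in this fiber and, after quotienting by $\iota_1\left(W\right)$, realizes the map $K\mapsto\iota_2\left(K\right)$ from $G_k\left(\mathbb{R}^\infty\right)$ to $G_k\left(\mathbb{R}^\infty/\iota_1\left(W\right)\right)$; this is a weak equivalence as $G_k$ applied to any injective linear map between countable-dimensional vector spaces induces a homotopy equivalence of classifying spaces $BSO\left(k\right)$. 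The main obstacle I expect is the careful book-keeping of these identifications, in particular checking that $\beta$ on the fiber really factors through the inclusion of $G_k\left(\mathbb{R}^\infty/\iota_1\left(W\right)\right)$ with the residual map $K\mapsto\iota_2\left(K\right)$; once this is settled, the five-lemma delivers the conclusion.
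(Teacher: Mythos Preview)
Your proposal is correct and follows the same overall architecture as the paper: define $\beta$ explicitly, verify the two squares by inspection, and then prove $\beta$ is a homotopy equivalence by viewing it as a map of fibrations over $G_m\left(\mathbb{R}^\infty\right)$ and reducing to the fiberwise map $\beta_A\colon G_k\left(\mathbb{R}^\infty\right)\to V_m\left(\gamma^{m+k}\right)$.

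The only substantive difference is in how you establish that $\beta_A$ is a weak equivalence. The paper lifts $\beta_A$ to a map $\bar{\beta}_A\colon V_k\left(\mathbb{R}^\infty\right)\to V_{m+k}\left(\gamma^{m+k}\right)$ of $SO\left(k\right)$-principal bundles and observes that both total spaces are contractible, so $\bar{\beta}_A$ is a homotopy equivalence and hence so is $\beta_A$. You instead use the forgetful fibration $V_m\left(\gamma^{m+k}\right)\to V_m\left(\mathbb{R}^\infty\right)$, note its base is contractible so each fiber inclusion is a weak equivalence, and identify $\beta_A$ with a map $G_k\left(\mathbb{R}^\infty\right)\to G_k\left(\mathbb{R}^\infty/\iota_1\left(W\right)\right)$ induced by an injective linear map between infinite-dimensional spaces. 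Both arguments exploit the contractibility of infinite Stiefel spaces; yours is slightly more direct since it avoids introducing the auxiliary $\left(m+k\right)$-frame bundle, while the paper's version packages the comparison neatly as a morphism of principal bundles. One small point you should make explicit: continuity of $\beta$ (as opposed to mere well-definedness) is checked using local sections of $V_m\left(\mathbb{R}^\infty\right)\to G_m\left(\mathbb{R}^\infty\right)$, exactly as the paper does.
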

\begin{rmq} The top line of Diagram~\ref{diagramme336A} is equivalent to the zigzag
\[
BSO\left(m\right)\overset{pr_1}\leftarrow BSO\left(m\right)\times BSO\left(k\right)\overset{\tilde{\rho}}\rightarrow BSO\left(m+k\right)
\]
where $\tilde{\rho}$ is the universal map classifying the Whithney sum.
\end{rmq}
From  Proposition~\label{equivalencebimk}  and Proposition~\ref{prop25} we deduce the following theorem which is crucial for the construction of rational model of the construction of Stiefel.

 \begin{thm}
  \label{corollaire333}
Let  $m$ and $k$ two  positive integers. Let $\xi$ and $ \eta$ be two oriented vector bundles classified by $\; f\colon X\rightarrow G_m\left(\mathbb{R}^\infty\right)$ and $g\colon Y\rightarrow G_{m+k}\left(\mathbb{R}^\infty\right)$ respectively. Then the construction of Stiefel associated  to  $\xi$ and $\eta$ is given by the double pullback:
\begin{eqnarray*}
  \xymatrix{
\Stieffel\left(\xi,\eta\right)\ar[dd]_{\pi_1}\ar@/^2pc/[rr] ^{\pi_2} \ar[r]& \Stieffel\left(\gamma^m,\eta\right)\ar[d]\ar@{}[rd]^-{p.b.} \ar[r]\ar[d]&Y\ar[d]^{g}\\
\ar[d]\ar@{}[rd]^{p.b.}&G_m\left(\mathbb{R}^\infty\right)\times G_k\left(\mathbb{R}^\infty\right)\ar[r]_-{\rho_{m,k}} \ar[d]^{pr_1}& G_{m+k}\left(\mathbb{R}^\infty\right)\\
X \ar[r]_{f}&G_m\left(\mathbb{R}^\infty\right).
}
 \end{eqnarray*}
 \end{thm}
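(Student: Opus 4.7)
The plan is to combine Proposition~\ref{prop25} with Proposition~\ref{equivalencebimk}. Since $\xi$ and $\eta$ are classified by $f$ and $g$ respectively, the universal property of the Grassmannians gives canonical isomorphisms of oriented vector bundles $\xi \cong f^{\ast}\gamma^m$ and $\eta \cong g^{\ast}\gamma^{m+k}$, and consequently $\Stieffel(\xi,\eta) \cong \Stieffel(f^{\ast}\gamma^m, g^{\ast}\gamma^{m+k})$. The strategy is then to unfold the right-hand side as an iterated pullback of the \emph{universal} Stiefel construction, and finally to replace that universal construction by the simpler space $G_m(\mathbb{R}^\infty) \times G_k(\mathbb{R}^\infty)$.

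First I would apply Proposition~\ref{prop25} to the pair of classifying maps $f\colon X \to G_m(\mathbb{R}^\infty)$ and $g\colon Y \to G_{m+k}(\mathbb{R}^\infty)$, taking the universal bundles $\gamma^m$ and $\gamma^{m+k}$ as the reference bundles on these target Grassmannians. Diagram~(\ref{diagrammedp}) then exhibits $\Stieffel(\xi,\eta)$ as an iterated pullback: first one pulls back $\pi_2\colon\Stieffel(\gamma^m,\gamma^{m+k}) \to G_{m+k}(\mathbb{R}^\infty)$ along $g$ to obtain $\Stieffel(\gamma^m,\eta) \to Y$ (square (2)), and then one pulls back the resulting map $\Stieffel(\gamma^m,\eta) \to G_m(\mathbb{R}^\infty)$ along $f$ to obtain $\Stieffel(\xi,\eta) \to X$ (square (3)).

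Next I would insert Proposition~\ref{equivalencebimk}: the homotopy equivalence $\beta$ identifies $\Stieffel(\gamma^m,\gamma^{m+k})$ with $G_m(\mathbb{R}^\infty)\times G_k(\mathbb{R}^\infty)$ in a way that turns $\pi_1$ into $pr_1$ and $\pi_2$ into $\rho_{m,k}$. Substituting this equivalence into the iterated pullback from the previous step, and then pasting the two squares by the pullback pasting lemma (as used in Lemma~\ref{lem7}), produces exactly the double pullback diagram asserted in the theorem. The only subtle point is that $\beta$ is a homotopy equivalence rather than a strict isomorphism, so after substitution the squares should be read as homotopy pullbacks; because $\pi_1$, $\pi_2$, $pr_1$ and $\rho_{m,k}$ are all fibre bundles, this creates no difficulty and I do not expect any genuine obstacle beyond this bookkeeping.
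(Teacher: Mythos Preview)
Your proposal is correct and follows exactly the route the paper takes: the theorem is stated there as an immediate consequence of Proposition~\ref{prop25} (double functoriality applied to the classifying maps $f$ and $g$) together with Proposition~\ref{equivalencebimk} (replacing $\Stieffel(\gamma^m,\gamma^{m+k})$ by $G_m(\mathbb{R}^\infty)\times G_k(\mathbb{R}^\infty)$ via $\beta$). Your observation that after substituting $\beta$ the squares should be read as homotopy pullbacks is also consistent with how the paper uses this theorem downstream.
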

 The  end of this section is devoted for the proof of Proposition~\ref{equivalencebimk}. We start with the construction of the map $\beta$.\\
 Let $l\colon \mathbb{R}^\infty\oplus \mathbb{R}^\infty\rightarrow \mathbb{R}^\infty$ be the linear isomorphism defined by 
 \[
 l\left(\left(x_0,x_1,\cdots\right),\left(y_0,y_1,\cdots\right)\right)=\left(x_0,y_0,x_1,y_1,\cdots\right).
 \]
The isomorphism $l$ induces a homeomorphism between  $G_m\left( \mathbb{R}^\infty\oplus \mathbb{R}^\infty\right)$ and  $G_m\left(\mathbb{R}^\infty\right)$. Let two morphisms 
 $j_{odd}\colon \mathbb{R}^\infty\rightarrow  \mathbb{R}^\infty$
 and  $j_{even}\colon \mathbb{R}^\infty\rightarrow  \mathbb{R}^\infty$ defined by 
 \begin{eqnarray*}
 j_{odd}\left(x_0,x_1,\cdots\right)=\left(0,x_0,0,x_1,\cdots\right), \text{ and }j_{even}\left(x_0,x_1,\cdots\right)=\left(x_0,0,x_1,0,\cdots\right).
 \end{eqnarray*}
The morphisms $l, j_{odd}$ and $j_{even} $ induce the maps
\begin{eqnarray*}
Ej\colon V_m\left(\mathbb{R}^\infty\right)\times V_k\left(\mathbb{R}^\infty\right)&\longrightarrow &V_{m+k}\left(\mathbb{R}^\infty\right)\\
\left(u,v\right)&\mapsto&l\left(j_{odd}\left(u\right),j_{even}\left(v\right)\right),
\end{eqnarray*}
and
\begin{eqnarray*}
\rho_{m,k}\colon G_m\left(R^\infty\right)\times G_k\left(\mathbb{R}^\infty\right)&\longrightarrow &G_{m+k}\left(R^\infty\right)\\
\left(A,B\right)&\mapsto& l\left(j_{odd}\left(A\right)\oplus {j_{even}\left(B\right)}\right).
\end{eqnarray*}
Let $GL^+\left(m\right)\rightarrow V_m\left(\mathbb{R}^\infty\right)\overset{\pi_0}\rightarrow G_m\left(\mathbb{R}^\infty\right)$ be the universal $GL^+\left(m\right)$-principal, where the projection is defined by $\pi_0\left(e\right)=\langle{e}\rangle$ (where $\langle{e}\rangle$ is the oriented $m$-plane spanned by the oriented $m$ frame $e$).  By the definition of the principal bundle there is an open cover $\{O_i\}$ of $G_m\left(R^\infty\right)$ and continuous maps $\{\sigma_i\colon O_i\rightarrow V_m\left(\mathbb{R}^\infty\right)\}$, called locals sections over $O_i$, such that for all $i$ we have
\[
\pi_0\circ \sigma_i=id.
\]
Therefore, if $A$ is an element of $ G_m\left(R^\infty\right)$ there exists an open set  $O_i$ such that $A\in O_i$.Thus $\sigma_i\left(A\right)\in V_m\left(\mathbb{R}^\infty\right)$. For $\left(A,B\right)$ in $G_m\left(\mathbb{R}^\infty\right)\times G_k\left(\mathbb{R}^\infty\right)$, we define the map $\beta$ by:
\begin{eqnarray}
\label{equationbeta}
\beta\left(A,B\right)=\left[\sigma_i\left(A\right),\left(\rho_{m,k}\left(A,B\right),j_{odd}\left(\sigma_i\left(A\right)\right)\right)\right].
\end{eqnarray} 
The map $\beta$ is well defined. In fact suppose that there exist another open set $O_r$ containing $A$, and le $\sigma_r$ be the local section over $O_r$. Since 
$GL^+\left(m\right)\rightarrow V_m\left(\mathbb{R}^\infty\right)\rightarrow G_m\left(\mathbb{R}^\infty\right)$ is a principal bundle there exist $g$ in $GL^+\left(m\right)$ such that 
\[
\sigma_r\left(A\right)=\sigma_i\left(A\right).g.
\]
 Consequently we have
\begin{eqnarray*}
&&\left[\sigma_r\left(A\right),\left(\rho_{m,k}\left(A, B)\right),j_{odd}\left(\sigma_r\left(A\right)\right)\right)\right]=\left[\sigma_i\left(A\right).g,\left(\rho_{m,k}\left(A,B\right),j_{odd}\left(\sigma_i\left(A\right)g\right)\right)\right]\\
&=&\left[\sigma_i\left(A\right).g,g^{-1}.\left(\rho_{m,k}\left(A,B\right),j_{odd}\left(\sigma_i\left(A\right)\right)\right)\right]
=\left[\sigma_i\left(A\right),\left(\rho_{m,k}\left(A,B\right),j_{odd}\left(\sigma_i\left(A\right)\right)\right)\right].
\end{eqnarray*}

  \begin{lem} 
 \label{restriction}
For $A\in G_m\left(\mathbb{R}^\infty\right)$, the map 
\begin{eqnarray*}
\beta_A\colon G_k\left(\mathbb{R}^\infty\right)&\longrightarrow& V_m\left(\gamma^{m+k}\right)\\
B&\mapsto& \left(\rho_{m,k}\left(A, B\right),j_{odd}\left(\sigma_i\left(A\right)\right)\right)
\end{eqnarray*}
is a homotopy equivalence.
 \end{lem}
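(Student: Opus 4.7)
The strategy is to exhibit $\beta_A$ as, up to a self-homeomorphism of $G_k(\mathbb{R}^\infty)$, a section of a natural homotopy equivalence $q\colon V_m(\gamma^{m+k}) \to G_k(\mathbb{R}^\infty)$. Fix the standard inner product on $\mathbb{R}^\infty$ and define
\[
q\colon V_m(\gamma^{m+k}) \longrightarrow G_k(\mathbb{R}^\infty),\qquad (W,v) \longmapsto \langle v \rangle^\perp \cap W,
\]
where $(W,v)$ consists of an oriented $(m+k)$-plane $W \subset \mathbb{R}^\infty$ and an oriented $m$-frame $v$ in $W$, and $\langle v \rangle^\perp \cap W$ denotes the oriented orthogonal complement of the span of $v$ inside $W$ (oriented so that the orientation of $\langle v\rangle$ followed by that of the complement recovers the given orientation of $W$).

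The first step is to show that $q$ is a homotopy equivalence. The fibre of $q$ over an oriented $k$-plane $B\subset \mathbb{R}^\infty$ consists of those $(W,v)$ for which $W=B\oplus\langle v\rangle$ with $v\perp B$, so it is canonically the oriented Stiefel manifold $V_m(B^\perp)$ of oriented $m$-frames in the orthogonal complement of $B$ in $\mathbb{R}^\infty$. Since $B^\perp$ is again a countably-infinite-dimensional inner product space, $V_m(B^\perp)$ is homeomorphic to $V_m(\mathbb{R}^\infty)$, which is contractible as the total space of the universal principal $GL^+(m)$-bundle $V_m(\mathbb{R}^\infty)\to G_m(\mathbb{R}^\infty)$. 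Thus $q$ has contractible fibres, so it is a weak equivalence between spaces of CW homotopy type, and hence a homotopy equivalence. The main obstacle here is the technical verification that $q$ is a fibre bundle (or at least a quasi-fibration) with respect to the direct limit topology on $\mathbb{R}^\infty$; this is most cleanly done by writing $q$ as the colimit of the analogous finite-dimensional maps, which are genuine locally trivial fibre bundles with Stiefel fibres.

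To conclude, I would compute $q\circ\beta_A$. Unpacking the definition of $\beta_A$,
\[
q\circ\beta_A(B) \;=\; \langle j_{odd}(\sigma_i(A))\rangle^\perp \cap \rho_{m,k}(A,B) \;=\; j_{odd}(A)^\perp \cap l\bigl(j_{odd}(A)\oplus j_{even}(B)\bigr),
\]
which, since $l$, $j_{odd}$ and $j_{even}$ are linear isometries, simplifies to $l(j_{even}(B))$, a self-homeomorphism of $G_k(\mathbb{R}^\infty)$. Therefore $q\circ\beta_A$ is a homotopy equivalence, and combined with the already-established fact that $q$ is a homotopy equivalence, this forces $\beta_A$ to be a homotopy equivalence as well.
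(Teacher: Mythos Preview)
Your argument works and takes a different route from the paper. The paper lifts $\beta_A$ to a map $\bar\beta_A\colon V_k(\mathbb{R}^\infty)\to V_{m+k}(\gamma^{m+k})$ covering $\beta_A$ as a morphism of $SO(k)$-principal bundles (the source bundle is the universal one over $G_k(\mathbb{R}^\infty)$; the target has base $V_m(\gamma^{m+k})$ and projection ``keep the first $m$ vectors of the $(m+k)$-frame''). Since both total spaces are contractible, $\bar\beta_A$ is an equivalence, and the five lemma on the long exact homotopy sequences forces $\beta_A$ to be one as well. Your approach instead produces a one-sided homotopy inverse $q$ with contractible fibres. Both arguments ultimately rest on the contractibility of infinite Stiefel manifolds; the paper's version has the practical advantage of bypassing the quasifibration verification for $q$ that you yourself flag, since it only invokes standard principal bundles.

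One correction is needed: $q\circ\beta_A$ is \emph{not} a self-homeomorphism of $G_k(\mathbb{R}^\infty)$. Unwinding the definitions (whatever precise reading one gives to the paper's formula for $\rho_{m,k}$), the composite sends $B$ to the image of $B$ under a fixed linear isometric embedding $\mathbb{R}^\infty\hookrightarrow\mathbb{R}^\infty$ such as $j_{even}$; this embedding is not surjective, so the induced self-map of $G_k(\mathbb{R}^\infty)$ is not onto. What you do get is that $q\circ\beta_A$ is homotopic to the identity, because any linear isometric embedding of $\mathbb{R}^\infty$ into itself is isotopic to the identity through such embeddings (equivalently, both self-maps classify the tautological $k$-plane bundle). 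That weaker statement still suffices for your 2-out-of-3 conclusion, so the proof stands once ``self-homeomorphism'' is downgraded to ``homotopy equivalence''.
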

 \begin{proof}
 Consider the universal $SO\left(k\right)$-principal bundle 
 \[
 SO\left(k\right)\longrightarrow V_k\left(\mathbb{R}^\infty\right)\overset{\pi_0}\longrightarrow G_k\left(\mathbb{R}^\infty\right),
\]
where $\pi_0$ sends each $k$-frame of $\mathbb{R}^\infty$  on the $k$-plan that it spans
\[
\pi_0\left(v\right)=\langle{v}\rangle. 
\]
Consider the $SO\left(k\right)$-principal bundle
\[
SO\left(k\right)\longrightarrow V_{m+k}\left(\gamma^{m+k}\right)\overset{p_0}\longrightarrow V_{m}\left(\gamma^{m+k}\right)
\]
where

\[
V_{m+k}\left(\gamma^{m+k}\right)=\{\left(W,w\right) |\; W\subset \mathbb{R}^{\infty},\; \dim W= m+k, w \text{ is a } m+k- \text{frame of } W\}
\] 
and the projection $p_0$ is defined for all $\left(W,w\right)$ in $V_{m+k}\left(\gamma^{m+k}\right)$ by 
\[
p_0 \left(W,w\right)=\left(W, w_1\right),
\]
 where $w_1$ denotes the $m$ first vectors of the $m+k$-frame $w$. Let $A\subset \mathbb{R}^\infty$ be a subspace of dimension $m$. Let
\begin{eqnarray*}
\bar{\beta}_A \colon V_k\left(\mathbb{R}^\infty\right)&\longrightarrow& V_{m+k}\left(\gamma^{m+k}\right)\\
v&\mapsto&\left(\rho_{m,k}\left(A,\langle{ v\rangle}\right),Ej\left(\sigma_i\left(A\right), v\right)\right).
\end{eqnarray*}
The map $\bar{\beta}_A$ is a homotopy equivalence because $V_k\left(\mathbb{R}^\infty\right)$ and $ V_{m+k}\left(\gamma^{m+k}\right)$ are contractibles.  On the other hand for all $v$ in $V_k\left(\mathbb{R}^\infty\right)$, we have:
\begin{eqnarray*}
p_0\circ \bar{\beta}_A\left(v\right)&=&p_0\left(\rho_{m,k}\left(A, \langle{v\rangle}\right),Ej\left(\sigma_i\left(A\right),v\right)\right)\\
&=&\left(\rho_{m,k}\left(A,\langle{ v}\rangle\right),j_{odd}\left(\sigma_i\left(A\right)\right)\right),
\end{eqnarray*}
and
\begin{eqnarray*}
\beta_A\circ\pi_0\left(v\right)=\beta_A\left(\langle{v}\rangle\right)
= \left(\rho_{m,k}\left(A,{\langle{ v}\rangle}\right),j_{odd}\left(\sigma_i\left(A\right)\right)\right). 
\end{eqnarray*}
Therefore the diagram of $SO\left(k\right)$-principal bundles following is commutative.
\begin{eqnarray*}
\xymatrix{
V_k\left(\mathbb{R}^\infty\right)\ar[r]^-{\bar{\beta}_A}\ar[d]_{\pi_0}&V_{m+k}\left(\gamma^{m+k}\right) \ar[d]^{p_0}\\
G_k\left(\mathbb{R}^\infty\right) \ar[r]_-{\beta_A}&V_{m}\left(\gamma^{m+k}\right)}
\end{eqnarray*}
in other words $\left(\bar{\beta_A},\beta_A\right)$ is a  morphism of principal bundles. Since $\bar{\beta}_A$ is a homotopy equivalence, from the property of long suite exact associated to a fibre bundles we deduce that $\beta_A$ is a homotopy equivalence.
\end{proof}
  \begin{proof}[Proof  of Proposition~\ref{equivalencebimk}]
 The first part of the proof is easy. In fact, for $\left(A,B\right)$ in $G_m\left(\mathbb{R}^\infty\right)\times G_k\left(\mathbb{R}^\infty\right)$,
  we have on the one hand:
  \begin{eqnarray*}
\pi_1\circ\beta\left(A,B\right)&=&\pi_1\left(\left[\sigma_i\left( A\right),\left(\rho_{m,k}\left(A, B\right),j_{odd}\left(\sigma_i\left(A\right)\right)\right)\right]\right)\\
&=&\pi_0\left(\sigma_i\left(A\right)\right)=A
=pr_1\left(A,B\right),
\end{eqnarray*}
and on the other hand:
\begin{eqnarray*}
{\pi}_{2}\circ \beta\left(A,B\right)&=&{\pi}_{2}\left(\left[\sigma_i\left(A\right),\left(\rho_{m,k}\left(A,B\right),j_{odd}\left(\sigma_i\left(A\right)\right)\right)\right]\right)\\
&=&\rho_{m,k}\left(A,B\right).
\end{eqnarray*} 
this end the first part of the proof. \\
Now, we prove that  $\beta$ is a homotopy  equivalence. From the first part of of the proof we have 
\[
\pi_1\circ \beta =pr_1.
\]
Since 
\[
 pr_1\colon G_m\left(\mathbb{R}^\infty\right) \times G_k\left(\mathbb{R}^\infty\right) \longrightarrow G_m\left(\mathbb{R}^\infty\right),
 \]
 and
 \[
\pi_1\colon V_m\left(\mathbb{R}^\infty\right)\times_{GL^+\left(m\right)} V_m\left(\gamma^{m+k}\right)\longrightarrow G_m\left(\mathbb{R}^\infty\right)
\]
are fibre bundles, then $\left(\beta,id\right)$ is a morphism of fibre bundles. 
  Let  $A$ in $G_m\left(\mathbb{R}^\infty\right)$, the  restriction of the map $\beta$ on the fibres over $A$  is the map
  \[
  \beta_{\mid A} \colon G_k\left(\mathbb{R}^\infty\right)\longrightarrow \pi_1^{-1}\left(A\right)=\{\left[e,\left(W,w\right)\right] \mid\, \pi_1\left(e\right)=A\},
  \]
 defined  for all $B$ in $G_k\left(\mathbb{R}^\infty\right)$ by $ \beta_{\mid A} \left(B\right)=\beta\left(A,B\right)$. This map is the composition of 
 \[
 \beta_A\colon G_k\left(\mathbb{R}^\infty\right)\longrightarrow V_m\left(\gamma^{m+k}\right)
 \]
 which is a homotopy equivalence by Lemma~\ref{restriction},  and the map
 \[
 \alpha\colon V_m\left(\gamma^{m+k}\right)\longrightarrow \pi_1^{-1}\left(A\right),
 \] 
defined for all $\left(W,w\right)$ in $V_m\left(\gamma^{m+k}\right)$ by
\[
\alpha\left(W,w\right)= \left[\sigma_i\left(A\right),\left(W,w\right)\right]
\]
 which is a homeomorphism (see \cite[Proposition 1, p 198]{halperin2} or \cite[Proposition 3.7]{koke} for the proof). Consequently, from the property of long suite associated to a fibre bundles we deduce that $ \beta_{\mid A}$ is a homotopy equivalence.
 \end{proof}
 
\section{ Rational model of Stiefel bundle}
\label{ratmod}
In all this section we fix two oriented vector bundles $\xi \colon \mathbb{R}^m\rightarrow E\xi\rightarrow X$ and $\eta \colon\mathbb{R}^{m+k}\rightarrow E\eta\rightarrow Y$ 
 of rank $m$ and $m+k$ respectively. We  suppose that $X$ and $Y$ are simply connected. Our main goal is to construct a  cgda model of the Stiefel bundle (Theorem~\ref{modelfibredestiefel})
\begin{eqnarray}
\label{eqmds}
V_m\left(\eta\right)\longrightarrow \Stieffel\left(\xi,\eta\right)\overset{\pi_1}\longrightarrow X
\end{eqnarray} 
and to deduce that this bundle is rationally trivial under some mild hypothesis on the characteristics classes of $\xi$ (Corollary~\ref{corodecoro}). The data needed to build a cgda model of the Stiefel bundle are:
\begin{itemize}
\item
a cgda model $\left(A,d_A\right)$ of $X$,
\item
a  Sullivan model $\left(\Lambda V,d\right)$ of $Y$,
\item
 representatives of the dual  Pontryagin classes of $\xi$, $\tilde{p}_{\ell}\left(\xi\right)\in A^{4\ell}\cap\ker d_A,$  for each  integer $\ell \geq \frac{k}{2}$,
 \item
 representatives of the Pontryagin classes of $\eta$,  $p_{\ell}\left(\eta\right)\in \left(\Lambda{V}\right)^{4\ell}\cap\ker d$, for each  integer $\ell \geq \frac{k}{2}$,
 \item
representative of the Euler class of $\xi$, $e\left(\xi\right)\in A^{m}\cap\ker d_A$
\item
 representative of the Euler class  of $\eta$,   $e\left(\eta\right)\in \left(\Lambda V\right)^{m+k}\cap\ker d$ (if $m+k$ is even).
\end{itemize}
\begin{thm}
 \label{modelfibredestiefel} A relative Sullivan model of the  Stiefel bundle (\ref{eqmds}) is given by
 \begin{eqnarray*}
\label{hhhh}
\left(A,d_A\right)\hookrightarrow \left(A\otimes \Lambda V\otimes \Lambda\left(\{\bar{a}_{\ell}\}_{\frac{k}{2}\leq \ell\leq \frac{m+k-1}{2}},\bar{e}_{m+k},e_k\right),D\right)
\end{eqnarray*}
where the new generators are
\begin{itemize}
\item  
$e_k$ is a generator of degree $k$ that appears only when $k$ is even,
\item
$\bar{e}_{m+k}$ is a generator of degree $m+k-1$ that appears only when $m+k$ is even,
\item
$\bar{a}_{\ell}$ are generators of  degree $4\ell-1$ that appear for all integers $\ell$ between $\frac{k}{2}$ and $\frac{m+k-1}{2}$,

\end{itemize}
and the differential $D$ is given by
\begin{itemize}
\item
$D_{|A}=d_A,$ and 
$D_{\mid V}=d,$
\item
$D\bar{a}_{\frac{k}{2}}=1\otimes p_{\frac{k}{2}}\left(\eta\right)\otimes 1-1\otimes 1\otimes e_k^2- \tilde{p}_{\frac{k}{2}}\left(\xi\right)\otimes 1\otimes 1,$ if $k$ is even
\item
 $D\bar{a}_{\ell}=1\otimes p_{\ell}\left(\eta\right)\otimes 1- \tilde{p}_{\ell}\left(\xi\right)\otimes 1\otimes 1 \text{ if }$ for each integer $\ell\in \left[\frac{k}{2},\frac{m+k-1}{2}\right]$,
\item
$ D\bar{e}_{m+k}=1\otimes e\left(\eta\right)\otimes 1-e\left(\xi\right)\otimes{1}\otimes e_k$  if $m+k$ is even (with $e_k=0$ when $k$ is odd)
\item
$De_k=0$ if $k$ is even.
\end{itemize}
 \end{thm}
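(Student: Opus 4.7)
The plan is to first reduce to the universal case (when $\xi=\gamma^m$ and $\eta=\gamma^{m+k}$) by invoking the double pullback of Theorem~\ref{corollaire333}, then construct the model explicitly in that universal case, and finally pull it back along the classifying maps of $\xi$ and $\eta$. Functoriality of relative Sullivan models under pullback will translate the universal characteristic classes $p_\ell(\gamma^m), p_\ell(\gamma^{m+k}), e(\gamma^m), e(\gamma^{m+k})$ into $\tilde p_\ell(\xi), p_\ell(\eta), e(\xi), e(\eta)$, producing exactly the formulas in the theorem.

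In the universal case the base is $X=G_m(\mathbb{R}^\infty)$ with $A=H^*(G_m(\mathbb{R}^\infty);\mathbb{Q})$, the fibre base is $Y=G_{m+k}(\mathbb{R}^\infty)$ with $\Lambda V=H^*(G_{m+k}(\mathbb{R}^\infty);\mathbb{Q})$ (both with zero differential), and by Proposition~\ref{equivalencebimk} the total space is $\Stieffel(\gamma^m,\gamma^{m+k})\simeq G_m(\mathbb{R}^\infty)\times G_k(\mathbb{R}^\infty)$ with $\pi_1$ the first projection. Starting from $(A\otimes\Lambda V,0)$, I would adjoin the new generators $\bar{a}_\ell,\bar{e}_{m+k},e_k$ one at a time, with differentials forced by the Whitney sum identity $p(\gamma^k)=\tilde p(\gamma^m)\cdot p(\gamma^{m+k})$ and the Euler identity $e(\gamma^{m+k})=e(\gamma^m)e(\gamma^k)$: in each degree above the range $\lfloor k/2\rfloor$ of $H^*(G_k(\mathbb{R}^\infty);\mathbb{Q})$, the virtual Pontryagin class $\sum_{j+i=\ell}\tilde p_j(\gamma^m)p_i(\gamma^{m+k})$ must be killed (or equated to $e_k^2$ at $\ell=k/2$ for $k$ even), producing $\bar{a}_\ell$ with the stated differential, and when $m+k$ is even $e(\gamma^{m+k})-e(\gamma^m)e_k$ must be killed, producing $\bar{e}_{m+k}$. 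The condition $D^2=0$ is immediate since every $D$-image lies in $A\otimes\Lambda V\otimes\mathbb{Q}[e_k]$, on which the differential vanishes.

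The central computation will be to check that this extension really is a relative Sullivan model, i.e.\ that its cohomology is isomorphic to $H^*(G_m(\mathbb{R}^\infty)\times G_k(\mathbb{R}^\infty);\mathbb{Q})$ as an $A$-algebra. I would argue this by recognizing the extension as a Koszul-type complex: the elements $\{D\bar{a}_\ell\}\cup\{D\bar{e}_{m+k}\}$ form a regular sequence in the polynomial ring $A\otimes\Lambda V\otimes\mathbb{Q}[e_k]$, because each of them is linear in a distinct generator of $\Lambda V$ (namely $p_\ell^{(m+k)}$ or $e_{m+k}$) plus correction terms that do not involve that generator. The Koszul cohomology is then concentrated in degree zero and equals the quotient ring; a change of basis in the surviving Pontryagin generators (for $\ell<k/2$) using the Whitney formula to replace $p_\ell^{(m+k)}$ by the corresponding $p_\ell(\gamma^k)$ then identifies this quotient with $H^*(G_m(\mathbb{R}^\infty)\times G_k(\mathbb{R}^\infty);\mathbb{Q})$.

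The main technical obstacle is precisely this change-of-basis step. It is delicate because the paper's definition of the dual Pontryagin classes uses the truncated polynomial $1+p_1(\xi)+\cdots+p_{\lfloor(m-1)/2\rfloor}(\xi)$, which when $m$ is even differs from the full Pontryagin class by the absence of $p_{m/2}(\xi)=e(\xi)^2$. One must therefore verify, case by case according to the parities of $k$ and $m+k$, that the $e(\xi)^2$ terms which would appear if one used the full formal inverse are exactly absorbed by the $-e_k^2$ correction in $D\bar{a}_{k/2}$ and by the Euler differential $D\bar{e}_{m+k}=e(\eta)-e(\xi)e_k$, so that the resulting quotient ring is correctly identified in every case. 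Once this is done, pulling back along $f\colon X\to G_m(\mathbb{R}^\infty)$ and $g\colon Y\to G_{m+k}(\mathbb{R}^\infty)$ via Theorem~\ref{corollaire333} gives the claimed model for general $\xi$ and $\eta$.
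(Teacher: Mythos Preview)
Your overall strategy matches the paper's: reduce to the universal case via the double pullback of Theorem~\ref{corollaire333}, construct the relative Sullivan model there, then base-change along the classifying maps $f$ and $g$ using \cite[Theorem~2.70]{amt08}. Your Koszul regular-sequence argument is a legitimate and rather clean alternative to the paper's explicit quasi-isomorphism (Lemma~\ref{modele6}) for verifying the universal model.

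There is, however, one genuine gap. You say that killing the virtual class $\sum_{i+j=\ell}\tilde p_i(\gamma^m)\,p_j(\gamma^{m+k})$ ``produces $\bar a_\ell$ with the stated differential'', but in the universal case the stated differential is $D\bar a_\ell=a_\ell-\tilde p_\ell$ (where $a_\ell=p_\ell(\gamma^{m+k})$), and this is \emph{not} that sum. The two families of cocycles do yield isomorphic relative Sullivan extensions, but only after an upper-triangular change of basis on the $\bar a_\ell$'s with coefficients in the $\tilde p_i$'s; establishing that change of basis is precisely the content of the paper's Lemma~\ref{ahl}, and your sketch does not supply it. Relatedly, to base-change along $g$ you need the inclusion of $\Lambda V$ into your universal model to model $\rho_{m,k}$ (not merely that the inclusion of $A$ models $\pi_1$), and verifying this compatibility again reduces to the same algebraic identity. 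So while your Koszul argument correctly shows the cohomology is right as an $A$-algebra, it does not by itself deliver the two-term differential of the theorem; the missing ingredient is exactly Lemma~\ref{ahl}.
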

 \begin{rmq}
 Notice that $\deg\left(e_k\right)=k$ but $\deg\left(\bar{e}_{m+k}\right)=m+k-1$. Also note that the only Pontryagin classes $p_\ell\left(\eta\right)$ and dual Pontryagin classes $\tilde{p}_\ell\left(\xi\right)$ used in  this model are those corresponding to indices $\ell\in \left[\frac{k}{2},\frac{m+k-1}{2}\right]$.
 \end{rmq}
 From Theorem~\ref{modelfibredestiefel}  we deduce the following corollary which is very useful  in the construction of the rational model of the space of immersions.
 \begin{coro}
 \label{corodecoro}
 Let $k=rank \;\eta-rank\; \xi$. Assume that $\forall \ell\geq \frac{k}{2}, \tilde{p}_{\ell}\left(\xi\right)=0$.
  If  $k$ is odd or $e\left(\xi\right)=0$,  then the Stiefel bundle $\Stieffel\left(\xi,\eta\right)\overset{\pi_1}\rightarrow X$ is rationally trivial. 
 \end{coro}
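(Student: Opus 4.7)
My plan is to use the explicit relative Sullivan model of the Stiefel bundle provided by Theorem~\ref{modelfibredestiefel},
\[
\left(A,d_A\right)\hookrightarrow \left(A\otimes \Lambda V\otimes \Lambda W,\,D\right),\qquad \Lambda W=\Lambda\left(\{\bar{a}_\ell\}_\ell,\bar{e}_{m+k},e_k\right),
\]
and to build a cgda isomorphism $\psi$ as required in Definition~\ref{rt} by adjusting each new generator of $\Lambda W$ by a suitable element of $A$. The candidate ``trivial'' model is $(A,d_A)\otimes (\Lambda W,\bar D)$, where the fibre differential $\bar D$ is obtained from $D$ by setting $A^+$ to zero, so $\bar D\bar{a}_\ell=p_\ell(\eta)$ (respectively $p_{k/2}(\eta)-e_k^2$ when $\ell=k/2$ and $k$ is even), $\bar D\bar{e}_{m+k}=e(\eta)$, and $\bar De_k=0$. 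The only obstruction preventing $D$ from already having the tensor-product form $d_A\otimes 1+1\otimes\bar D$ lies in the $A^+$-valued summands $\tilde{p}_\ell(\xi)\otimes 1\otimes 1$ appearing in $D\bar{a}_\ell$ and $e(\xi)\otimes 1\otimes e_k$ appearing in $D\bar{e}_{m+k}$.

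First I would kill the Pontryagin obstructions. Since $\tilde{p}_\ell(\xi)=0$ in $H^{4\ell}(X;\mathbb{Q})$ for every $\ell\geq k/2$, there exist primitives $\alpha_\ell\in A^{4\ell-1}$ with $d_A\alpha_\ell=\tilde{p}_\ell(\xi)$. Define $\psi$ to be the identity on $A$ and on $V$ and to send $\bar{a}_\ell$ to $\bar{a}_\ell+\alpha_\ell$ (identified with $\alpha_\ell\otimes 1\otimes 1$). A one-line computation gives
\[
D\psi(\bar{a}_\ell)=D\bar{a}_\ell+d_A\alpha_\ell\otimes 1\otimes 1=\psi\left(\bar D\bar{a}_\ell\right),
\]
uniformly for $\ell>k/2$ and for $\ell=k/2$; in the latter case the fibre-only summand $-1\otimes 1\otimes e_k^2$ is unaffected because $\psi$ fixes $e_k$.

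Next I would kill the Euler-class obstruction, distinguishing the two sub-cases of the hypothesis. When $k$ is odd, Theorem~\ref{modelfibredestiefel} imposes the convention $e_k=0$, so the summand $e(\xi)\otimes 1\otimes e_k$ already vanishes and no adjustment of $\bar{e}_{m+k}$ is needed. When $k$ is even, the hypothesis $e(\xi)=0$ in $H^m(X;\mathbb{Q})$ supplies $\beta\in A^{m-1}$ with $d_A\beta=e(\xi)$; extending $\psi$ by $\psi(\bar{e}_{m+k}):=\bar{e}_{m+k}+\beta\otimes 1\otimes e_k$ and $\psi(e_k):=e_k$, and using $De_k=0$, one checks $D\psi(\bar{e}_{m+k})=1\otimes e(\eta)\otimes 1=\psi(\bar D\bar{e}_{m+k})$. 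Because $\psi$ sends every generator of the source to itself modulo $A^+$-valued corrections, it is a bijection on generators and therefore an isomorphism of graded algebras; being compatible with differentials and restricting to the identity on $(A,d_A)$, it is the required rational trivialisation in the sense of Definition~\ref{rt}.

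The only delicate point is bookkeeping of Koszul signs in the computations of $D(\alpha_\ell\otimes 1\otimes 1)$ and $D(\beta\otimes 1\otimes e_k)$, together with the careful handling of the special index $\ell=k/2$ where the extra summand $-e_k^2$ is fibrewise (hence harmless). Both are routine once Theorem~\ref{modelfibredestiefel} is in hand, and no ingredient beyond the vanishing of $\tilde{p}_{\ell\geq k/2}(\xi)$ and (if $k$ is even) of $e(\xi)$ is required to conclude.
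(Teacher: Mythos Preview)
Your argument is correct, but it takes a slightly different (and slightly longer) route than the paper. The paper exploits the freedom in the statement of Theorem~\ref{modelfibredestiefel}: the $\tilde{p}_\ell(\xi)$ and $e(\xi)$ appearing there are \emph{chosen} cocycle representatives, so when the corresponding cohomology classes vanish one may simply take the representatives to be $0$ in $A$. With that choice the differential $D$ already equals $d_A\otimes 1 + 1\otimes \bar D$ and the isomorphism $\psi$ is the identity. You instead keep arbitrary (coboundary) representatives and build an explicit change-of-generators isomorphism by adding primitives $\alpha_\ell$ and $\beta\otimes e_k$; this is perfectly valid and has the mild advantage of showing directly that rational triviality is independent of the choice of representatives made in Theorem~\ref{modelfibredestiefel}.

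One notational point to tidy up: you write the candidate trivial model as $(A,d_A)\otimes(\Lambda W,\bar D)$ with $\Lambda W=\Lambda(\{\bar a_\ell\},\bar e_{m+k},e_k)$, but your own formula $\bar D\bar a_\ell=p_\ell(\eta)$ lands in $\Lambda V$, so the fibre algebra must be $\Lambda V\otimes\Lambda W$ (as in the paper's proof of Corollary~\ref{corodecoro}). Since you correctly set $\psi|_V=\mathrm{id}$, this is only a slip of notation, not a mathematical gap.
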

 \begin{proof}
Under these hypotheses the differential $D$ of the model in Theorem~\ref{modelfibredestiefel} becomes
 \begin{itemize}
 \item
$D_{|A}=d_A,$
$D_{\mid V}=d,$
\item
$D\bar{a}_{\frac{k}{2}}=1\otimes p_{\frac{k}{2}}\left(\eta\right)\otimes 1-e_k^2$ and $D\bar{a}_{\ell}=1\otimes p_{\ell}\left(\eta\right)\otimes 1 \text{ if } \ell >\frac{k}{2}$,
\item
$ D\bar{e}_{m+k}=1\otimes e\left(\eta\right)\otimes 1.$
\end{itemize} 
Thus we have an isomorphism
\[
 \left(A\otimes \Lambda V\otimes \Lambda\left(\{\bar{a}_{\ell}\}_{\frac{k}{2}\leq \ell\leq \frac{m+k-1}{2}},\bar{e}_{m+k},e_k\right),D\right)\cong\left(A,d_A\right)\otimes\left(  \Lambda V\otimes \Lambda\left(\{\bar{a}_{\ell}\}_{\frac{k}{2}\leq \ell\leq \frac{m+k-1}{2}},\bar{e}_{m+k},e_k\right),\bar{D} \right).
 \]
 Therefore $\left(  \Lambda V\otimes \Lambda\left(\{\bar{a}_{\ell}\}_{\frac{k}{2}\leq \ell\leq \frac{m+k-1}{2}},\bar{e}_{m+k},e_k\right),\bar{D}\right)$ is a model of the fibre $V_m\left(\eta\right)$ and according to Definition~\ref{rt} the bundle \ref{eqmds} is rationally trivial.
 \end{proof}
 We have also the following corollary.
 \begin{coro} 
 \label{coro43}
   If $e\left(\eta\right)=0$, and if  $\; \forall \ell\geq \frac{k}{2}, \; p_\ell\left(\eta\right)=0$, then 
  \[
  V_m\left(\eta\right)\simeq_{\mathbb{Q}}Y\times V_m\left(\mathbb{R}^{m+k}\right).
  \]
  \end{coro}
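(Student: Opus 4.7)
The plan is to specialize Theorem~\ref{modelfibredestiefel} to the case where the auxiliary bundle $\xi$ is the trivial rank--$m$ bundle over a point. Take $X = \mathrm{pt}$, so $A = \mathbb{Q}$ with zero differential, and let $\xi\colon \mathbb{R}^m \to \mathbb{R}^m \to \mathrm{pt}$ be the trivial oriented bundle. Then $\prin(\xi) = GL^+(m)$ and
\[
\Stieffel(\xi,\eta) \;=\; GL^+(m)\times_{GL^+(m)} V_m(\eta) \;\cong\; V_m(\eta).
\]
Moreover every characteristic class of $\xi$ vanishes: $e(\xi)=0$ and $\tilde{p}_\ell(\xi)=0$ for all $\ell\geq 1$. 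Let $(\Lambda V, d)$ be a Sullivan model of $Y$.

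Applying Theorem~\ref{modelfibredestiefel} to this pair, a Sullivan model of $V_m(\eta)$ is
\[
\left(\Lambda V \otimes \Lambda\!\left(\{\bar{a}_\ell\}_{\frac{k}{2}\leq \ell\leq \frac{m+k-1}{2}},\bar{e}_{m+k},e_k\right),\,D\right),
\]
with $D|_V = d$ and $D$ on the new generators given by the formulas of Theorem~\ref{modelfibredestiefel}, in which $\tilde{p}_\ell(\xi)$ and $e(\xi)$ are replaced by zero. Using the two hypotheses $e(\eta)=0$ and $p_\ell(\eta)=0$ for $\ell\geq \frac{k}{2}$, these differentials collapse to $D\bar{a}_{\frac{k}{2}} = -e_k^{\,2}$ (when $k$ is even), $D\bar{a}_\ell = 0$ for $\ell > \frac{k}{2}$, $D\bar{e}_{m+k}=0$ (when $m+k$ is even), and $De_k=0$. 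Crucially, none of these expressions involve elements of $V$, so the sub-CGDA $\Lambda(\{\bar{a}_\ell\},\bar{e}_{m+k},e_k)$ is preserved by $D$ and carries an intrinsic differential $\bar{D}$.

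This yields a CGDA isomorphism
\[
\left(\Lambda V \otimes \Lambda(\{\bar{a}_\ell\},\bar{e}_{m+k},e_k),\,D\right) \;\cong\; (\Lambda V, d) \otimes \left(\Lambda(\{\bar{a}_\ell\},\bar{e}_{m+k},e_k),\,\bar{D}\right).
\]
To identify the second factor, apply Theorem~\ref{modelfibredestiefel} a second time with $Y=\mathrm{pt}$ and $\eta$ the trivial bundle of rank $m+k$: since $\Stieffel$ then reduces to $V_m(\mathbb{R}^{m+k})$ and all characteristic classes of both bundles vanish, we obtain exactly $(\Lambda(\{\bar{a}_\ell\},\bar{e}_{m+k},e_k),\,\bar{D})$ as a Sullivan model of $V_m(\mathbb{R}^{m+k})$. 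Therefore the right-hand side above is a model of $Y \times V_m(\mathbb{R}^{m+k})$, proving the rational homotopy equivalence $V_m(\eta)\simeq_{\mathbb{Q}} Y\times V_m(\mathbb{R}^{m+k})$.

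There is no serious obstacle here: the content of the statement is essentially packed into Theorem~\ref{modelfibredestiefel}. The only care required is the case analysis on the parities of $k$ and $m+k$ to check that every term in $D$ that could couple the new generators to $V$ comes from either $e(\eta)$ or a Pontryagin class $p_\ell(\eta)$ with $\ell\geq \frac{k}{2}$, and is therefore killed by the hypotheses. Once this bookkeeping is done, the tensor decomposition is immediate.
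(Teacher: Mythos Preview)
Your proof is correct and follows essentially the same route as the paper: both arguments use Theorem~\ref{modelfibredestiefel} to obtain a Sullivan model of $V_m(\eta)$, observe that the hypotheses kill every term coupling the new generators to $\Lambda V$, and conclude with the tensor decomposition. The only cosmetic difference is that you specialize to $X=\mathrm{pt}$ so that $\Stieffel(\xi,\eta)\cong V_m(\eta)$ is the total space itself, whereas the paper reads off the model of $V_m(\eta)$ as the fibre of the relative Sullivan model; your extra step of reapplying Theorem~\ref{modelfibredestiefel} with $Y=\mathrm{pt}$ to identify the second tensor factor as a model of $V_m(\mathbb{R}^{m+k})$ is a nice explicit touch that the paper leaves implicit.
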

  \begin{proof}
  From Theorem~\ref{modelfibredestiefel} and on hypotheses on the Pontryagin and Euler classes of $\eta$, the differential $\bar{D}$ of the fibre $V_m\left(\eta\right)$ satisfies 
 \begin{itemize}
 \item
$\bar{D}_{\mid V}=d,$
\item
$\bar{D}\bar{a}_{\frac{k}{2}}= p_{\frac{k}{2}}\left(\eta\right)\otimes 1-1\otimes e_k^2$ if $k$ is even,
\item
 $\bar{D}\bar{a}_{\ell}= p_{\ell}\left(\eta\right)\otimes 1 \text{ for } \ell >\frac{k}{2},$
\item
$ \bar{D}\bar{e}_{m+k}=e\left(\eta\right)\otimes 1,$
\end{itemize}  
 Therefore we have an isomorphism
 \[
 \left( \Lambda V\otimes \Lambda\left(\{\bar{a}_{\ell}\}_{\frac{k}{2}\leq \ell\leq \frac{m+k-1}{2}},\bar{e}_{m+k},e_k\right),\bar{D}\right)\cong
 \left( \Lambda V,d\right)\otimes\left( \Lambda\left(\{\bar{a}_{\ell}\}_{\frac{k}{2}\leq \ell\leq \frac{m+k-1}{2}},\bar{e}_{m+k},e_k\right),d'\right),
 \]
 and the corollary follows.
 \end{proof}
 The rest of this section is devoted to the proof of Theorem~\ref{modelfibredestiefel}.\\
 
  Before to start the computation leading to the proof, let us give the main idea of the proof .  Theorem~\ref{corollaire333} assures that if $\xi$ and $\eta$ are vector bundles respectively classified by $f\colon X\rightarrow G_m\left(\mathbb{R}^\infty\right)$ and $g\colon Y\rightarrow G_{m+k}\left(\mathbb{R}^\infty\right)$, the construction of Stiefel associated to $\xi$ and $\eta$ is obtained by the following double pullbacks
\begin{eqnarray}
\label{abc}
  \xymatrix{
\Stieffel\left(\xi,\eta\right)\ar[dd]_{\pi_1}\ar@/^2pc/[rr] ^{\pi_2} \ar[r]& \Stieffel\left(\gamma^m,\eta\right)\ar[d]\ar@{}[rd]^-{p.b.(2)} \ar[r]\ar[d]&Y\ar[d]^{g}\\
\ar[d]\ar@{}[rd]^{p.b.(1)}&G_m\left(\mathbb{R}^{\infty}\right)\times G_k\left(\mathbb{R}^\infty\right)\ar[r]_-{\rho_{m,k}} \ar[d]^{pr_1}& G_{m+k}\left(\mathbb{R}^\infty\right)\\
X \ar[r]_{f}&G_m\left(\mathbb{R}^\infty\right).
}
 \end{eqnarray}
By \cite[Theorem~2.70]{amt08}, to determine the relative Sullivan model of the Stiefel bundle $\Stieffel\left(\xi,\eta\right)\overset{\pi_1}\rightarrow X$, we only need:
\begin{enumerate}
\item
a cgda model of the diagram
  \begin{eqnarray}
  \label{equoto}
  \xymatrix{
  G_m\left(\mathbb{R}^\infty\right)\times G_k\left(\mathbb{R}^\infty\right)\ar[rr]^-{\rho_{m,k}}\ar[d]_{pr_1}&&G_{m+k}\left(\mathbb{R}^\infty\right)\\
  G_m\left(\mathbb{R}^\infty\right)
  }
  \end{eqnarray}
  \item
  cgda models of $f\colon X\rightarrow G_m\left(\mathbb{R}^\infty\right)$ and $g\colon Y\rightarrow G_{m+k}\left(\mathbb{R}^\infty\right)$.
 \end{enumerate}
 Recall that the cohomology of any oriented Grassmann manifold  $G_n\left(\mathbb{R}^\infty\right)$ is a free polynomial algebra on even degree generator (corresponding to the Pontryagin classes and, when $n$ is even, the top Pontryagin class replace by its square root which is the Euler class). Therefore a cgda model of (\ref{equoto}) is obtained by just taking the cohomology algebra and since $\rho_{m,k}$ is the universal map classifying the Whitney sum, the induced map in cohomology  is given by the classical formula for the Pontryagin classes (and Euler classes) of a Whitney sum. More precisely, let us denote the Pontryagin classes of $G_m\left(\mathbb{R}^\infty\right), G_{m+k}\left(\mathbb{R}^\infty\right)$ and $G_k\left(\mathbb{R}^\infty\right)$ by, respectively, $p_t\in H^{4t}\left(G_{m}\left(\mathbb{R}^\infty\right),\mathbb{Q}\right), a_i\in H^{4i}\left(G_{m+k}\left(\mathbb{R}^\infty\right),\mathbb{Q}\right)$, $b_j\in H^{4j}\left(G_{k}\left(\mathbb{R}^\infty\right),\mathbb{Q}\right)$ and their Euler classes by $e_m,e_{m+k}$ and $e_k$ (which only appears when the corresponding subscript is even). Then the cgda model of (\ref{equoto}) is given by
     \begin{eqnarray*}
  \xymatrix{
\left( \Lambda\left(p_1,\cdots,p_{\lfloor\frac{m-1}{2}\rfloor},e_m,b_1\cdots b_{\lfloor\frac{k-1}{2}\rfloor},e_k\right),0\right)&&\left(\Lambda\left(a_1,\cdots,a_{\lfloor\frac{m+k-1}{2}\rfloor},e_{m+k}\right),0\right)\ar[ll]_-{\tilde{\rho}_{m,k}}\\
 \left( \Lambda\left(p_1,\cdots, p_{\lfloor\frac{m-1}{2}\rfloor},e_m\right),0\right)\ar@{^{(}->}[u]^{\iota}
  }
  \end{eqnarray*} 
with the convention that the generator $e_n$ only appear when is even. The Whitney somme formula  (\cite[Theorem~15.3 and Property~9.6]{Milnor}) imply that

 \[
\tilde{\rho}_{m,k} \left(a_i\right)=\sum_{i=t+j}p_tb_j,  \text{ it is understood that } p_0=b_0=1,
\]
and
\begin{eqnarray*}
\tilde{\rho}_{m,k}\left(e_{m+k}\right)=
\begin{cases}
\begin{aligned}
&e_me_k& \text{   if $m$ and $k$ are even}\\
&0 &\text{   if $m$ and $k$ are odd}.
\end{aligned}
\end{cases}
\end{eqnarray*}
We will replace $\tilde{\rho}_{m,k}$ by a relative Sullivan model, in which will make appear dual Pontryagin classes $\tilde{p}_t$.\\
Let us now build in details this  relative Sullivan model of $\rho_{m,k}$.\\
 The dual Pontryagin classes $\tilde{p}_t\in H^{4t}\left(G_{m}\left(\mathbb{R}^\infty\right),\mathbb{Q}\right)$ are characterized  by the equation
\[
\left(1+p_1+\cdots +p_{\lfloor\frac{m-1}{2}\rfloor}\right)\left(1+\tilde{p}_1+\cdots\right)=1.
\]
We will show that a relative Sullivan algebra 
  \begin{eqnarray}
   \label{adgc3313}
\left(A_1,D_1\right)= \left(\Lambda\left(a_1,\cdots a_{\lfloor\frac{m+k-1}{2}\rfloor},e_{m+k}\right)\otimes \Lambda\left(\bar{a}_{\lceil\frac{k}{2}\rceil},\cdots \bar{a}_{\lfloor\frac{m+k-1}{2}\rfloor}, p_1,\cdots p_{\lfloor\frac{m-1}{2}\rfloor},e_m,e_k,\bar{e}_{m+k}\right),D_1\right)
 \end{eqnarray}
where the differential $D_1$ is defined by:
\begin{itemize}
\item
$D_1a_i=D_1e_{m+k}=D_1e_m=D_1p_t=D_1e_k=0$,
\item
$D_1\bar{a}_{\frac{k}{2}}=a_{\frac{k}{2}}+e_k^2-\tilde{p}_{\frac{k}{2}}$ if $k$ is even\\ 
\item
$D_1\bar{a}_{\ell}=a_{\ell}-\tilde{p}_{\ell}$ if $\ell>\frac{k}{2}$,
\item
$D_1\bar{e}_{m+k}=e_{m+k}-e_me_k$.
\end{itemize}
is a relative model of Diagram~(\ref{equoto}).
 \begin{lem}
 \label{modele6}
Let $\left(A_1,D_1\right)$ be the relative Sullivan algebra constructed in (\ref{adgc3313}). 
Let $\left(\Lambda\left(a_1,\cdots,a_{\lfloor\frac{m+k-1}{2}\rfloor},e_{m+k}\right),0\right)$ and $\left(\Lambda\left(p_1,\cdots,p_{\lfloor\frac{m-1}{2}\rfloor},e_m\right),0\right)$ be the minimal model of  $G_{m+k}\left(\mathbb{R}^\infty\right)$ and $G_m\left(\mathbb{R}^\infty\right)$ respectively. The diagram
 \begin{eqnarray*}
  \newcommand{\immouv}[1][r]
   {\ar@{}[#1] |[F]{\hbox{%
         \vrule width 1.5mm height 0pt depth 0pt%
         \vrule width 0pt height .75mm depth .75mm%
         }}
     \ar@{^{(}->}[#1]}
  \xymatrix{
\left(A_1,D_1\right)&\left(\Lambda\left(a_1\cdots,a_{\lfloor\frac{m+k-1}{2}\rfloor},e_{m+k}\right),0\right) \immouv[l]\\
\left(\Lambda\left(p_1\cdots,p_{\lfloor\frac{m-1}{2}\rfloor},e_m\right),0\right) \ar@{^{(}->}[u],
}
\end{eqnarray*}
is a relative Sullivan model of Diagram~(\ref{equoto}).
 \end{lem}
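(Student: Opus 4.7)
The plan is to check three things: that $(A_1, D_1)$ is a well-defined cgda, that it carries a relative Sullivan structure over $(\Lambda(p_1,\ldots,e_m),0)$, and that there is a cgda quasi-isomorphism
\[
\phi\colon (A_1, D_1)\longrightarrow \bigl(\Lambda(p_1,\ldots,e_m,b_1,\ldots,e_k),\,0\bigr)
\]
compatible with the two inclusions of Lemma~\ref{modele6}, so that the cospan realizes Diagram~(\ref{equoto}) up to quasi-isomorphism.

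First, $D_1^2=0$ follows by inspection: $D_1$ vanishes on every $a_i$, $e_{m+k}$, $p_t$, $e_m$, $e_k$, and since $\tilde{p}_\ell$ and $e_me_k$ are polynomials in these classes, each $D_1\bar{a}_\ell$ and $D_1\bar{e}_{m+k}$ is a cocycle. Ordering the new generators by degree then exhibits $(A_1, D_1)$ as a genuine relative Sullivan algebra over the base $(\Lambda(p_1,\ldots,e_m),0)$: the differential of every added generator lies in the subalgebra generated by the base and previously added generators.

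For the quasi-isomorphism $\phi$, I would set $\phi$ equal to the identity on $p_t, e_m, e_k$, and define $\phi(a_\ell)=\sum_{t+j=\ell} p_tb_j$ (with the convention $b_{k/2}=e_k^2$ when $k$ is even and $b_j=0$ for $j>k/2$) and $\phi(e_{m+k})=e_me_k$, in accordance with the Whitney sum formula. The auxiliary generators $\bar{a}_\ell$ and $\bar{e}_{m+k}$ are then sent to elements of the target (typically $0$) making $\phi$ commute with differentials. The key verification is that $\phi(D_1\bar{a}_\ell)$ vanishes in the target, which reduces to a cohomological identity in $H^*\bigl(G_m(\mathbb{R}^\infty)\times G_k(\mathbb{R}^\infty)\bigr)$ relating the pullback $\rho_{m,k}^*(a_\ell)$ to the dual Pontryagin class $\tilde{p}_\ell(\gamma^m)$. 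This identity is extracted from the multiplicativity $p(\gamma^m)p(\gamma^k)=\rho_{m,k}^*\,p(\gamma^{m+k})$ combined with the vanishing of the Pontryagin classes of $\gamma^k$ above rank $k$ and the defining relation $p(\gamma^m)\tilde{p}(\gamma^m)=1$ of the dual classes; the borderline case $\ell=k/2$ is where the Euler class term $e_k^2$ enters the formula for $D_1\bar{a}_\ell$.

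Finally, that $\phi$ is a quasi-isomorphism is checked by computing $H^*(A_1,D_1)$: the generators $\bar{a}_\ell$ and $\bar{e}_{m+k}$ successively eliminate $a_\ell$ for $\ell\geq\lceil k/2\rceil$ (modulo $\tilde{p}_\ell$ and $e_k^2$) and $e_{m+k}$ (modulo $e_me_k$), leaving a polynomial algebra on generators in the same degrees as $H^*(G_m\times G_k)$; the induced map sends each remaining $a_\ell$ with $\ell<\lceil k/2\rceil$ to $b_\ell$ plus a polynomial in $p_t,e_m,e_k$, so it is a bijection on polynomial generators and hence an isomorphism. I expect the main obstacle to be the careful bookkeeping of the Whitney and dual Whitney identities needed to establish the cocycle condition for $\phi$, as well as checking that the definition of $\phi$ on the $\bar{a}_\ell$ is consistent with the chosen lift of the Whitney sum formula in the relative Sullivan model.
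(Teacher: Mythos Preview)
Your proposal has a genuine gap in the construction of $\phi$. You aim for a direct cgda map
\[
\phi\colon (A_1,D_1)\longrightarrow \bigl(\Lambda(p_1,\ldots,p_{\lfloor\frac{m-1}{2}\rfloor},e_m,b_1,\ldots,b_{\lfloor\frac{k-1}{2}\rfloor},e_k),\,0\bigr)
\]
compatible with both legs of the cospan. Compatibility forces $\phi(a_\ell)=\tilde{\rho}_{m,k}(a_\ell)=\sum_{t+j=\ell}p_tb_j$ and $\phi(p_t)=p_t$. But the target has \emph{zero} differential, so commuting with $D_1$ requires, for each $\ell>\frac{k}{2}$,
\[
0=\phi(D_1\bar{a}_\ell)=\phi(a_\ell)-\tilde{p}_\ell=\sum_{t+j=\ell}p_tb_j-\tilde{p}_\ell,
\]
and this is simply false in the polynomial ring whenever a genuine $b_j$ survives in the sum. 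For instance with $m=6$, $k=3$, $\ell=2$ one gets $p_2+p_1b_1-(p_1^2-p_2)=2p_2+p_1b_1-p_1^2\neq 0$. No choice of $\phi(\bar{a}_\ell)$ can repair this, since any image is automatically a cocycle in the target.

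The paper's proof avoids this obstruction by not mapping directly into the cohomology. It first writes down the \emph{canonical} relative Sullivan model $(A_1',D_1')$ of $\tilde{\rho}_{m,k}$, obtained by adjoining odd generators $x_1,\ldots,x_{\lfloor\frac{m+k-1}{2}\rfloor}$ with $D_1'x_i=a_i-\sum_{t+j=i}p_tb_j$ for \emph{all} $i$ (not only $i\ge\lceil\frac{k}{2}\rceil$), together with $\bar{e}_{m+k}$. It then builds a cgda map $\phi\colon(A_1,D_1)\to(A_1',D_1')$ sending each $\bar{a}_\ell$ to the nonzero element $-(\tilde{p}_{\ell-1}x_1+\tilde{p}_{\ell-2}x_2+\cdots+x_\ell)$. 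The algebraic identity verified in Lemma~\ref{ahl}, namely $a_\ell-\tilde{p}_\ell=-\bigl(\tilde{p}_{\ell-1}(a_1-f_1)+\cdots+(a_\ell-f_\ell)\bigr)$ with $f_i=D_1'x_i$, is exactly what makes $\phi$ a chain map. The extra odd generators $x_i$ in $(A_1',D_1')$ are what absorb the obstruction you encountered; working directly in the zero-differential target throws this mechanism away.
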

 For the proof of this result we need the following lemma, in which we use the projection $pr_1\colon G_m\left(\mathbb{R}^\infty\right)\times G_k\left(\mathbb{R}^\infty\right)\rightarrow G_m\left(\mathbb{R}^\infty\right)$ and the map $\rho_{m,k} \colon G_m\left(\mathbb{R}^\infty\right)\times G_k\left(\mathbb{R}^\infty\right)\rightarrow G_{m+k}\left(\mathbb{R}^\infty\right)$ .
  \begin{lem}
 \label{ahl}
In $H^\ast \left(G_m\left(\mathbb{R}^\infty\right)\times G_k\left(\mathbb{R}^\infty\right),\mathbb{Q}\right)$ and for $i,j, t\geq 0$, let $f_i=a_i-\sum_{t+j=i}p_tb_j$, where $a_i,b_j, p_t$ for $i,j,t\geq 1$ are as in Lemma~\ref{modele6}  and $p_0=b_0=1$. Let $\tilde{p}_{i}, i\geq 1$ be the homogeneous polynomial of degree $4i$ satisfying  
  \[
  \left(1+p_1+\cdots p_{\lfloor\frac{m-1}{2}\rfloor}\right)\left(1+\tilde{p}_{1}+\cdots\right)=1,
  \]
then $\forall \ell> \lceil\frac{k}{2}\rceil $ we have the following relations
   \[
  a_{\ell}-\tilde{p}_{\ell}=-\left(\tilde{p}_{\ell-1}\left(a_1-f_1\right)+\cdots+\left(a_{\ell}-f_{\ell}\right)\right),
  \]  
  and if  $k$ is even, we have
   \[
  a_{\frac{k}{2}}-e_k^2-\tilde{p}_{\frac{k}{2}}=-\left(\tilde{p}_{\frac{k}{2}-1}\left(a_1-f_1\right)+\cdots+\left(a_{\frac{k}{2}}-f_{\frac{k}{2}}\right)\right).
  \]    \end{lem}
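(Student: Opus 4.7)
The plan is a short generating-series computation. Package the relevant classes into formal series
\[
P = 1 + \sum_{t\geq 1} p_t, \qquad B = 1 + \sum_{j\geq 1} b_j, \qquad A = 1 + \sum_{i\geq 1} a_i, \qquad \tilde{P} = 1 + \sum_{s\geq 1} \tilde{p}_s,
\]
graded so that the degree-$4n$ component is the $n$-th homogeneous piece. Two multiplicative identities drive the argument. First, by the very definition of the dual Pontryagin classes, $P\cdot \tilde{P}=1$. Second, by the very definition of the $f_i$, one has $A = P\cdot B + F$ where $F:=\sum_{i\geq 1} f_i$ is simply the definition $f_i = a_i - \sum_{t+j=i} p_t b_j$ rewritten as one identity of series.

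Multiplying the second identity on the right by $\tilde{P}$ and applying the first gives
\[
A\cdot \tilde{P} \;=\; (P\cdot B)\cdot \tilde{P} + F\cdot \tilde{P} \;=\; B + F\cdot \tilde{P}.
\]
Reading off the coefficient of degree $4\ell$ and separating the $\tilde{p}_\ell$ contribution coming from the constant term $a_0=1$ on the left produces the master relation
\[
\tilde{p}_\ell \;+\; \sum_{i=1}^{\ell} \tilde{p}_{\ell-i}\,(a_i - f_i) \;=\; b_\ell.
\]

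To conclude, I plug in the value of $b_\ell\in H^{4\ell}(G_k(\mathbb{R}^\infty);\mathbb{Q})$. The cohomology of $G_k(\mathbb{R}^\infty)$ is the polynomial algebra on $b_1,\dots,b_{\lfloor (k-1)/2\rfloor}$ together with $e_k$ when $k$ is even, with the convention that the top Pontryagin class is $b_{k/2}=e_k^2$ in the even case. Hence $b_\ell = 0$ whenever $\ell > \lceil k/2\rceil$, and $b_{k/2} = e_k^2$ when $k$ is even. Substituting the first value into the master relation yields the first displayed identity of the lemma, and substituting the second gives the $\ell = k/2$ identity with its $e_k^2$ correction. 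The only thing that requires care is keeping track of the constant term (the isolated $\tilde{p}_\ell$ on the left-hand side) and the index ranges when rearranging; beyond that, the argument is a one-line consequence of $P\tilde{P} = 1$, and I do not expect any substantive obstacle.
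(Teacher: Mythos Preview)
Your argument is essentially identical to the paper's: both compute $(A-F)\tilde P = B$ (the paper writes it as $1+b_1+\cdots=(1+(a_1-f_1)+\cdots)(1+\tilde p_1+\cdots)$, you write the equivalent $A\tilde P = B + F\tilde P$) and then read off the degree-$4\ell$ coefficient. Your master relation
\[
\tilde p_\ell + \sum_{i=1}^{\ell}\tilde p_{\ell-i}(a_i-f_i)=b_\ell
\]
is correct and is exactly what the paper's computation yields.

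There is one genuine gap in your write-up, and the paper's own proof has the very same gap. Setting $b_\ell=0$ in the master relation gives
\[
\tilde p_\ell \;=\; -\bigl(\tilde p_{\ell-1}(a_1-f_1)+\cdots+(a_\ell-f_\ell)\bigr),
\]
\emph{not} $a_\ell-\tilde p_\ell=-(\cdots)$ as the lemma states. The two differ by $a_\ell$, and a small example (say $m=4$, $k=3$, $\ell=3$, where the right side equals $\tilde p_3=-p_1^3$ while the stated left side is $a_3+p_1^3$) shows the displayed identity is false as written. The same remark applies to the $\ell=k/2$ case: your master relation gives $\tilde p_{k/2}-e_k^2=-(\cdots)$, not $a_{k/2}-e_k^2-\tilde p_{k/2}=-(\cdots)$. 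So when you say ``substituting \dots\ yields the first displayed identity of the lemma'', that step does not go through; what you have actually proved is the version without the spurious $a_\ell$. You should flag this as a typo in the statement rather than claim the master relation reproduces it verbatim.
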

 \begin{proof}
First of all, note that 
\begin{eqnarray*}
 1+f_1+\cdots+f_{\lfloor\frac{m+k-1}{2}\rfloor}=1+a_1+\cdots a_{\lfloor\frac{m+k-1}{2}\rfloor}-\left(\sum_{t+j=1}p_tb_j+\cdots+\sum_{t+j=\lfloor\frac{m+k-1}{2}\rfloor}p_tb_j\right)\\
=2+a_1+\cdots a_{\lfloor\frac{m+k-1}{2}\rfloor}-\left(1+\sum_{t+j=1}p_tb_j+\cdots+\sum_{t+j=\lfloor\frac{m+k-1}{2}\rfloor}p_tb_j\right)\\
 =2+a_1+\cdots a_{\lfloor\frac{m+k-1}{2}\rfloor}-\left(1+p_1+\cdots+p_{\lfloor\frac{m-1}{2}\rfloor}\right)\left(1+b_1+\cdots b_{\lfloor\frac{k-1}{2}\rfloor}\right)
\end{eqnarray*}
consequently
  \begin{eqnarray*}
\left(1+p_1+\cdots+p_{\lfloor\frac{m-1}{2}\rfloor}\right)\left(1+b_1+\cdots b_{\lfloor\frac{k-1}{2}\rfloor}\right)
=1+\left(a_1-f_1\right)+\cdots+\left(a_{\lfloor\frac{m+k-1}{2}\rfloor}-f_{\lfloor\frac{m+k-1}{2}\rfloor}\right).
\end{eqnarray*}
Since
 \[
 \left(1+\tilde{p}_{1}+\cdots\right)= \left(1+p_1+\cdots+p_{\lfloor\frac{m-1}{2}\rfloor}\right)^{-1}
 \]
 on one hand we have $\forall i\geq 1, \tilde{p}_{i}$ is a polynomial of $p_1,\cdots p_{\lfloor\frac{m-1}{2}\rfloor}$ and on the other hand
 \[
 1+b_1+\cdots b_{\lfloor\frac{k-1}{2}\rfloor}=\left(1+\left(a_1-f_1\right)+\cdots+\left(a_{\lfloor\frac{m+k-1}{2}\rfloor}-f_{\lfloor\frac{m+k-1}{2}\rfloor}\right) \right)\left(1+{p}_1+\cdots\right)^{-1}
  \]
In particular if  $\ell > \frac{k}{2}$ we obtain
  \[
  a_{\ell}-{}\tilde{p}_{\ell}=-\left(\tilde{p}_{\ell-1}\left(a_1-f_1\right)+\cdots+\left(a_{\ell}-f_{\ell}\right)\right),
  \]
  if $k$ is even $b_{\frac{k}{2}}=e_k^2$ henceforth  
  \[
  a_{\ell}+e_k^2-\tilde{p}_{\frac{k}{2}}=-\left(\tilde{p}_{\frac{k}{2}-1}\left(a_1-f_1\right)+\cdots+\left(a_{\frac{k}{2}}-f_{\frac{k}{2}}\right)\right).
  \]

 \end{proof}
 \begin{proof}[Proof of Lemma~\ref{modele6}]  
 The rational cohomology of Diagram~(\ref{equoto}) is given by the diagram
     \begin{eqnarray*}
  \xymatrix{
 \mathbb{Q}\left[p_1,\cdots,p_{\lfloor\frac{m-1}{2}\rfloor},e_m,b_1,\cdots b_{\lfloor\frac{k-1}{2}\rfloor},e_k\right]&&\mathbb{Q}\left[a_1,\cdots,a_{\lfloor\frac{m+k-1}{2}\rfloor},e_{m+k}\right]\ar[ll]_-{\tilde{\rho}_{m,k}}\\
  \mathbb{Q}\left[p_1,\cdots, p_{\lfloor\frac{m-1}{2}\rfloor},e_m\right]\ar@{^{(}->}[u]^{\iota}
  }
  \end{eqnarray*} 
 where $\iota$ is the canonical inclusion, and by \cite[Theorem~15.3 and Property~9.6]{Milnor}
 \[
\tilde{\rho}_{m,k}\left(a_i\right)=\sum_{i=t+j}p_tb_j \text{  and   }\; \tilde{\rho}_{m,k}\left(e_{m+k}\right)=e_me_k.
 \]
 Thus
      \begin{eqnarray*}
  \xymatrix{
\left( \Lambda\left(p_1,\cdots,p_{\lfloor\frac{m-1}{2}\rfloor},e_m,b_1\cdots b_{\lfloor\frac{k-1}{2}\rfloor},e_k\right),0\right)&&\left(\Lambda\left(a_1,\cdots,a_{\lfloor\frac{m+k-1}{2}\rfloor},e_{m+k}\right),0\right)\ar[ll]_-{\tilde{\rho}_{m,k}}\\
 \left( \Lambda\left(p_1,\cdots, p_{\lfloor\frac{m-1}{2}\rfloor},e_m\right),0\right)\ar@{^{(}->}[u]^{\iota}
  }
  \end{eqnarray*} 
 
  is a model. This model is not a relative Sullivan model. In order to obtain a relative Sullivan model
we first transform $\tilde{\rho}_{m,k}$ to a relative model, such as
  \begin{eqnarray*}
  \xymatrix{
\left(A_1',D_1'\right) &&\left(\Lambda\left(a_1,\cdots,a_{\lfloor\frac{m+k-1}{2}\rfloor},e_{m+k}\right),0\right)\ar@{^{(}->}[ll]\\
  \left(\Lambda\left(p_1,\cdots,p_{\lfloor\frac{m-1}{2}\rfloor},e_m\right),0\right)\ar@{^{(}->}[u]^{\iota}}
 \end{eqnarray*} 
  with
 \[
  \left(A_1',D_1'\right)=\left(\Lambda\left(p_1,\cdots, p_{\lfloor\frac{m-1}{2}\rfloor},e_m,b_1,\cdots, b_{\lfloor\frac{k-1}{2}\rfloor},e_k\right)\otimes \Lambda\left(x_1,\cdots, x_{\lfloor\frac{m+k-1}{2}\rfloor}, \bar{e}_{m+k},a_1,\cdots, a_{\lfloor\frac{m+k-1}{2}\rfloor},{e}_{m+k}\right),D_1'\right)
  \]
  where
  \begin{itemize}
  \item 
 $x_i$ are generators of degree $4i-1$,
  \item
$D_1'a_i=D_1'e_{m+k}=
D_1'b_j=D_1'p_t=D_1'e_m=D_1'e_k=0$,
\item
$D_1\bar{a_i}= a_i- \tilde{\rho}_{m,k}\left(a_i\right) =a_i-\sum_{t+j=i}p_tb_j$,
\item
$D_1\bar{e}_{m+k}=e_{m+k}- \tilde{\rho}_{m,k}\left(e_{m+k}\right)=e_{m+k}-e_me_k$.
\end{itemize} 
To end we have to prove that this cgda is isomorphic to $\left(A_1,D_1\right)$. For this we define a map

\[
\phi\colon \left(A_1,D_1\right)\longrightarrow \left(A_1',D_1'\right)
\]  by
\begin{itemize}
\item
$\phi\left(a_i\right)=a_i, \phi\left(e_{m+k}\right)=e_{m+k}, \phi\left(p_t\right)=p_t, \phi\left(e_k\right)=e_k,\phi\left(e_m\right)=e_m$,
\item
$\phi\left(\bar{a}_{\ell}\right)=-\left(\tilde{p}_{\ell-1} x_1+\tilde{p}_{\ell-2} x_2+\cdots {x}_{\ell}\right)$
\item
$\phi\left(\bar{e}_{m+k}\right)=\bar{e}_{m+k}$.
\end{itemize}
This morphism is a cgda morphism. In fact, we have
\begin{itemize}
\item
$\phi\left(D_1a_i\right)=D'_1a_i=0, \phi\left(D_1e_{m+k}\right)=D'_1e_{m+k}=0, \phi\left(D'_1p_t\right)=D'_1p_t=0, \phi\left(D_1e_k\right)=D_1'e_k=0,$\\
$\phi\left(D_1e_m\right)=D_1'e_m=0$,
\item
$\phi\left(D_1\bar{a}_{\ell}\right)=\phi\left(a_{\ell}-\tilde{p}_{\ell}\right)=a_{\ell}-\tilde{p}_{\ell} $ and
\begin{eqnarray*}
D_1'\left(\tilde{p}_{\ell-1}\bar{a}_1+\tilde{p}_{\ell-2}\bar{a}_2+\cdots \bar{a}_{\ell}\right)&=&\left(\tilde{p}_{\ell-1} D_1'\bar{a}_1+\tilde{p}_{\ell-2} D'_1\bar{a}_2+\cdots D'_1 \bar{a}_{\ell}\right)\\
&=&\tilde{p}_{\ell-1}\left(a_1-(p_1+b_1)\right)+\cdots+{p}_1(a_{\ell}-\sum_{t+j=\ell} p_tb_j)\\
&=&-\left(a_\ell-{p}_\ell\right) \text{ (by Lemma~\ref{ahl})}.
\end{eqnarray*}
Then $\forall \ell >\frac{k}{2}$ we have $\phi\left(D_1\bar{a}_{\ell}\right)= D_1'\phi\left(\bar{a}_{\ell}\right)$. Similarly we can prove that 
\[
\phi\left(D_1{x}_{\frac{k}{2}}\right)= a_{\frac{k}{2}}-e_k^2-\tilde{p}_{\frac{k}{2}}=D_1'\phi\left(\bar{a}_{\frac{k}{2}}\right).
\]
\item
$\phi\left(D_1\bar{e}_{m+k}\right)=\phi\left(e_{m+k}-e_me_k\right)=e_{m+k}-e_me_k=D_1'\phi\left(\bar{e}_{m+k}\right)$.
\end{itemize}
This proves that  $\phi$ is a cgda  morphism and it is clear that it induces an isomorphism on cohomology.
 \end{proof}

Now we give the proof of Theorem~\ref{modelfibredestiefel}
  \begin{proof}[Proof of Theorem~\ref{modelfibredestiefel}]
    By \cite[Theorem~2.70]{amt08} , applied to the homotopy pullback (2) of  Diagram~(\ref{abc}), the model of $\Stieffel\left(\gamma^m,\eta\right)$ is of the form
 \begin{eqnarray*}
  \left(\Lambda{V},d\right)\otimes_{\left(\Lambda\left(\{a_i\},e_{m+k}\right),0\right)}\left(A_1,D_1\right)
 \cong \left(\Lambda{V}\otimes\Lambda\left(\{\bar{a}_\ell\},\{p_t\},e_m,e_k, ,\bar{e}_{m+k}\right),D_2\right)
  \end{eqnarray*}
  \begin{itemize}
\item
${D_2}_{\mid\Lambda V}=d$,
\item
 $D_2\bar{a}_{\frac{k}{2}}=p_{\frac{k}{2}}\left(\eta\right)\otimes{1}-1\otimes\tilde{p}_{\frac{k}{2}}-1\otimes e_k^2 $\text{ if }$k$ is even,
\item
$D_2\bar{a}_\ell=p_\ell\left(\eta\right)\otimes{1}-1\otimes\tilde{p}_\ell \text{ if } \ell>\frac{k}{2}$ 
\item
$D_2\bar{e}_{m+k}=e\left(\eta\right)\otimes 1-e_me_k$,
\item
$D_2p_t=D_2e_m=D_2e_k=0$.
 \end{itemize} 
 
By \cite[Theorem~2.70]{amt08}, applied to the homotopy pullback (1) of Diagram~(\ref{abc}), the model of 
$\Stieffel\left(\xi,\eta\right)$ is of the form
\begin{eqnarray*}\left(A,d_A\right)\otimes_{\left(\Lambda\left(\{p_t\},e_m\right),0\right)}\left(\Lambda{V}\otimes\Lambda\left(\{\bar{a}_i\},\{p_t,\},e_m,e_k,\bar{e}_{m+k}\right),D_2\right)
\cong\left(A\otimes\Lambda{V}\otimes\Lambda\left(\{\bar{a}_i\},e_k,\bar{e}_{m+k}\right),D\right)
\end{eqnarray*}
where
    \begin{itemize}
\item
$D_{\mid A}=d_A,\; {D}_{\mid\Lambda V}=d$,
\item
$D\bar{a}_\ell=1\otimes p_\ell\left(\eta\right)\otimes{1}-\tilde{p}_\ell\left(\xi\right)\otimes 1\otimes 1$ if $\ell>\frac{k}{2}$  and $D\bar{a}_{\frac{k}{2}}=1\otimes p_\ell\left(\eta\right)\otimes{1}-e_k^2-\tilde{p}_{\frac{k}{2}}\left(\xi\right)\otimes 1\otimes 1$,
\item
$D\bar{e}_{m+k}=1\otimes e\left(\eta\right)\otimes 1-e\left(\xi\right)\otimes 1\otimes e_k$,
\item
$De_k=0$.
 \end{itemize} 
\end{proof}

   \section{Rational homotopy of the space of immersions}
 \label{modelimm}
 In this section we study the rational homotopy type of the space of immersions of a manifold $M$ in a manifold $N$. 
\subsection{Smale-Hirsch Theorem}
We start  with a quick review of  the Smale-Hirsch theorem.
By applying the construction of Stiefel to the tangent bundles $\tau_M,$ and $\tau_N$ we obtain  the Stiefel bundle
\[
 V_m\left(\tau_N\right)\longrightarrow \Stieffel\left(\tau_M,\tau_N\right) \overset{\pi_1}\longrightarrow M.
\]
Let $\Gamma\left( \Stieffel\left(\tau_M,\tau_N\right) \overset{\pi_1}\longrightarrow M\right)$ denote the space of sections of  the Stiefel bundle. That is 
\[
\Gamma\left( \Stieffel\left(\tau_M,\tau_N\right)\longrightarrow M\right)=\{ s\colon M\longrightarrow \Stieffel\left(\tau_M,\tau_N\right) | \; \pi_1\circ s= id\}
\]
with the compact-open topology. The following result relates the space $\Gamma\left( \Stieffel\left(\tau_M,\tau_N\right)\rightarrow M\right)$ with the space of immersions of $M$ into $N$. It is the fundamental theorem  of immersion theory.
\begin{thm}\cite[Smale-Hirsch theorem]{hirsh}
\label{sht}
 Let $M$ be a manifold of dimension $m$ and let $N$ be a manifold of dimension $m+k$ with $k\geq 1$ an integer. There is an homotopy equivalence
\[
s\colon Imm\left(M,N\right)\longrightarrow \Gamma\left( \Stieffel\left(\tau_M,\tau_N\right)\longrightarrow M\right).
\]
\end{thm}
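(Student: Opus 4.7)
The plan is to construct $s$ directly from differentials of immersions, and then to invoke the classical h-principle of Smale--Hirsch to conclude that $s$ is a weak equivalence.

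First I construct $s$. Given $f\colon M\looparrowright N$, its differential $df\colon TM\to TN$ is a fibrewise linear injection covering $f$. I turn $df$ into a section of $\pi_1$ as follows: over $x\in M$, pick any oriented frame $u$ in the fibre of $\prin(\tau_M)$ over $x$; then $d_xf(u)$ is an oriented $m$-frame in $T_{f(x)}N$ (by injectivity of $d_xf$), and I set
\[
s(f)(x)=\bigl[u,\bigl(f(x),\,d_xf(u)\bigr)\bigr]\in\Stieffel(\tau_M,\tau_N).
\]
Two choices of $u$ differ by the right $\GL^+(m)$-action on $\prin(\tau_M)$, which is cancelled by the corresponding left action on $V_m(\tau_N)$, so the orbit class is well defined. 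The definition is manifestly fibrewise ($\pi_1\circ s(f)=\mathrm{id}_M$) and continuous in $x$, and the resulting map $s\colon\Imm(M,N)\to\Gamma(\pi_1)$ is continuous when the source carries the weak $C^1$-topology and the target the compact-open topology, since a $C^1$-small perturbation of $f$ induces a uniformly small perturbation of both $f$ and the frames $d_xf(u)$ on compact subsets.

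Second, I would show that $s$ is a weak homotopy equivalence, which is the substantive content of the theorem. The classical Smale--Hirsch strategy is to establish this in two stages. The base case treats open manifolds, typically $M=D^m$: using Smale's scanning/Whitney-type argument, every bundle monomorphism $\tau_{D^m}\to\tau_N$ is shown to be homotopic, through monomorphisms, to the differential of some immersion, and likewise for families, so $s$ is a weak equivalence over discs. For arbitrary $M$, Hirsch then proceeds by induction on a handle decomposition; the only thing to check at each step is that a bundle monomorphism over a handle $D^p\times D^{m-p}$, already integrated to an immersion on the attaching region $S^{p-1}\times D^{m-p}$, can be integrated over the whole handle. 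This extension is possible precisely because the codimension $k\geq 1$ provides the room for a Whitney-style general-position argument, which is exactly the hypothesis of the theorem.

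The main obstacle, of course, is the h-principle itself. Its known proofs rely either on Hirsch's handle induction together with Smale's open-manifold theorem, or on Gromov's convex integration / holonomic approximation; reproducing either would be far beyond the scope of this paper. Since the statement is quoted verbatim from \cite{hirsh,smale2}, I would treat it as a black box, whose only role here is to convert the topological problem of understanding $\Imm(M,N)$ into the section-theoretic problem for the Stiefel bundle, which is what the rational-homotopy machinery of the later sections attacks.
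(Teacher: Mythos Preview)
Your proposal is correct and in fact goes further than the paper does: the paper gives no proof of this theorem at all, simply citing it from \cite{hirsh} as ``the fundamental theorem of immersion theory'' and using it as a black box. Your explicit construction of $s$ is the standard one and is accurate, and your recognition that the h-principle itself should be quoted rather than reproved is exactly the paper's stance.
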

\begin{rmq}
In general  the spaces $Imm\left(M,N\right)$ and  $\Gamma\left( \Stieffel\left(\tau_M,\tau_N\right)\rightarrow M\right)$ are not connected. For each section $s_0\colon M\rightarrow \Stieffel\left(\tau_M,\tau_N\right) $ we denote by  $\; \Gamma_{s_0}\left( \Stieffel\left(\tau_M,\tau_N\right)\rightarrow M\right)$ the space of sections of\; $\Stieffel\left(\tau_M,\tau_N\right)$ homotopic to $s_0$. That is, 
\[
\Gamma_{s_0}\left( \Stieffel\left(\tau_M,\tau_N\right)\longrightarrow M\right)=\{s\colon M\longrightarrow \Stieffel\left(\tau_M,\tau_N\right) | \; \pi_1\circ s= id \text{ and } s\simeq s_0\}.\]

 By the Smale-Hirsch theorem, for each immersion $f\colon M\looparrowright N$ there exists a section
 \[
 s\left(f\right)\colon M\longrightarrow \Stieffel\left(\tau_M,\tau_N\right)
 \]
  such that $Imm\left(M,\mathbb{R}^{m+k};f\right)$ and $\Gamma_{s\left(f\right)}\left( \Stieffel\left(\tau_M,\tau_N\right)\rightarrow M\right)$ are homotopically equivalent.
 \end{rmq}
 In the following we will use this result to study the rational homotopy type of the space of immersions of a manifold in a manifold. 
  \subsection{Rational homotopy of the space of immersions of a manifold in a manifold}
  In this section we prove the following theorem, which identify the component of the space of immersions of a manifold $M$ in a manifold $N$ with some mapping space.
 \begin{thm}
 \label{theorem1}
 Let $M$ and $N$ be two manifolds of dimension $m$ and $m+k$ respectively, simply connected and of finite type, and $f\colon M \looparrowright N$ an immersion. Suppose that all the Pontryagin classes of $N$ are zeros. If  one of the following hypothesis satisfy:
 \begin{itemize}
 \item
  $k$ is odd, or
 \item
 $e\left(\tau_M\right)=0$ and  $\tilde{p}_{\frac{k}{2}}\left(\tau_M\right)=0$, 
 \end{itemize}
 then  we have the following rational homotopy equivalence
  \begin{eqnarray*}
 \label{eq:img}
 Imm\left(M,N;f\right)\simeq_{\mathbb{Q}}Map\left(M,V_m\left(\tau_N\right);\phi_f\right)
 \end{eqnarray*}
 where $\phi_f$ is some continuous  map from $M$ to $V_m\left(\tau_N\right)$.
\end{thm}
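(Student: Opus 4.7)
The plan is to chain the Smale--Hirsch theorem with the rational triviality of the Stiefel bundle from Corollary~\ref{corodecoro}, and then identify sections of a fiberwise trivial bundle with a mapping space.

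First, Theorem~\ref{sht} reduces the problem to identifying the rational homotopy type of the connected component of sections $\Gamma_{s(f)}(\Stieffel(\tau_M,\tau_N)\to M)$. Second, I would check that the hypotheses of Corollary~\ref{corodecoro} hold for $\xi = \tau_M$ and $\eta = \tau_N$. The immersion $f$ yields an isomorphism $f^\ast\tau_N \cong \tau_M \oplus \nu_f$, with $\nu_f$ the normal bundle of rank $k$, and the Whitney sum formula combined with the vanishing of the Pontryagin classes of $\tau_N$ gives
\[
p(\nu_f) \;=\; p(\tau_M)^{-1} \;=\; 1 + \tilde{p}_1(\tau_M) + \tilde{p}_2(\tau_M) + \cdots.
\]
Since $\nu_f$ has rank $k$, its Pontryagin classes vanish above index $k/2$, which forces $\tilde{p}_\ell(\tau_M) = 0$ for every $\ell > k/2$. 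When $k$ is odd this already covers all $\ell \geq k/2$; when $k$ is even, the extra assumptions $\tilde{p}_{k/2}(\tau_M) = 0$ and $e(\tau_M) = 0$ close the gap. In either case Corollary~\ref{corodecoro} applies and the Stiefel bundle $\pi_1$ is rationally trivial.

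Third, Corollary~\ref{corodecoro1} upgrades this to the statement that the fiberwise rationalization is a trivial bundle over $M$, namely
\[
\Stieffel(\tau_M,\tau_N)_{(\mathbb{Q})} \;\simeq\; M \times V_m(\tau_N)_{\mathbb{Q}}
\]
as bundles over $M$. Hence the space of sections of this trivial bundle, on the component containing the image of $s(f)$ under fiberwise rationalization, is identified with a component of $Map(M, V_m(\tau_N)_{\mathbb{Q}})$. Choosing a continuous representative $\phi_f : M \to V_m(\tau_N)$ whose rationalization represents this component gives the identification with $Map(M, V_m(\tau_N)_{\mathbb{Q}}; (\phi_f)_\mathbb{Q})$.

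The main obstacle I expect is the final step: relating the rational homotopy type of the original section space to that of the sections of its fiberwise rationalization, and similarly comparing $Map(M,Y;\phi)$ with $Map(M,Y_\mathbb{Q};\phi_\mathbb{Q})$ for $Y=V_m(\tau_N)$, a nilpotent space of finite type. Both comparisons rely on standard results of rational homotopy theory in the spirit of M{\o}ller's work on rationalization of function spaces: for a fiber bundle with simply connected fiber of finite type over a simply connected base of finite type, the natural map from a connected component of sections to the corresponding component of sections of the fiberwise rationalization is a rationalization; and the analogous statement holds for mapping spaces. Combining these two comparisons with the chain of equivalences above produces the desired rational homotopy equivalence $Imm(M,N;f) \simeq_{\mathbb{Q}} Map(M, V_m(\tau_N); \phi_f)$.
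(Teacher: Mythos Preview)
Your proposal is correct and follows essentially the same route as the paper: Smale--Hirsch reduces to a section space, the Whitney sum argument (which is exactly Lemma~\ref{federico}) forces $\tilde{p}_\ell(\tau_M)=0$ for $\ell>k/2$ so that Corollary~\ref{corodecoro} applies, and then Corollary~\ref{corodecoro1} together with M{\o}ller's comparison of section spaces with their fiberwise rationalizations yields the mapping space. The paper invokes \cite{moller} at the same spot you flagged as the ``main obstacle,'' so your identification of that step and its resolution are on target.
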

For the proof of this theorem we need the following lemma.
\begin{lem}
\label{federico}Let $f$ be an  immersion from $M$ to $N$ and $\nu_f$ the normal bundle of $f$.
If all the rational Pontryagin classes of $N$ are zero, then  we have the following equalities:
\begin{eqnarray*}
\begin{cases}
\begin{aligned}
&\tilde{p}_i\left(\tau_M\right)={p}_i\left(\nu_f\right)&& \text{ for } i\leq\lfloor\frac{k-1}{2}\rfloor&\\
&\tilde{p}_i\left(\tau_M\right)=0 &&\text{ for }i>\lfloor\frac{k-1}{2}\rfloor&
\end{aligned}
\end{cases}
\end{eqnarray*}
\end{lem}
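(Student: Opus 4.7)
The plan is to exploit the canonical splitting of $f^*\tau_N$ afforded by an immersion, and then use the Whitney sum formula together with the vanishing of the rational Pontryagin classes of $N$. Since $f\colon M \looparrowright N$ is an immersion and $\nu_f$ is its normal bundle, one has a natural bundle isomorphism
\[
f^*\tau_N \;\cong\; \tau_M \oplus \nu_f.
\]
This is the only geometric input needed; the rest is purely cohomological bookkeeping with total (dual) Pontryagin classes.

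The next step is to compare total rational Pontryagin classes on both sides of this isomorphism. By naturality and the Whitney sum formula,
\[
p(\tau_M)\cdot p(\nu_f) \;=\; p(\tau_M \oplus \nu_f) \;=\; p(f^*\tau_N) \;=\; f^*p(\tau_N)
\]
in $H^*(M,\mathbb{Q})$. The assumption that all rational Pontryagin classes of $N$ vanish forces $p(\tau_N)=1$, hence $f^*p(\tau_N)=1$, and so
\[
p(\tau_M)\cdot p(\nu_f) \;=\; 1.
\]
Rewriting this as $p(\nu_f)=p(\tau_M)^{-1}$ and recalling the definition of the dual Pontryagin classes, which characterizes $1+\tilde{p}_1(\tau_M)+\tilde{p}_2(\tau_M)+\cdots$ as the formal inverse of the Pontryagin polynomial of $\tau_M$, one gets
\[
1+\tilde p_1(\tau_M)+\tilde p_2(\tau_M)+\cdots \;=\; 1+ p_1(\nu_f)+ p_2(\nu_f)+\cdots .
\]
Comparing the degree $4i$ components yields $\tilde{p}_i(\tau_M) = p_i(\nu_f)$ for every $i \geq 1$, which gives the first displayed equality of the lemma in the range $i \leq \lfloor (k-1)/2\rfloor$.

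For the second equality, one simply observes that $\nu_f$ is a real vector bundle of rank $k$, so under the convention fixed at the beginning of the paper (where the top Pontryagin class of an even-rank oriented bundle is incorporated as the square of the Euler class and not counted among the ``ordinary'' Pontryagin classes), the class $p_i(\nu_f)\in H^{4i}(M,\mathbb{Q})$ is automatically zero as soon as $i>\lfloor (k-1)/2\rfloor$. Combined with the identity $\tilde{p}_i(\tau_M)=p_i(\nu_f)$ obtained above, this gives $\tilde{p}_i(\tau_M)=0$ in the stated range. The only subtle point of the argument is to keep the conventions for the upper indices of $p_i$ and $\tilde{p}_i$ consistent with the rank of the bundle on each side of the equality $p(\tau_M)\cdot p(\nu_f)=1$; once this bookkeeping is done the conclusion is immediate.
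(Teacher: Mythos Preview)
Your proof is correct and follows essentially the same approach as the paper: the splitting $f^*\tau_N\cong\tau_M\oplus\nu_f$, the rational Whitney sum formula $p(\tau_M)\cdot p(\nu_f)=f^*p(\tau_N)=1$, and the uniqueness of the inverse identifying $p(\nu_f)$ with the total dual Pontryagin class of $\tau_M$. Your explicit remark about the paper's indexing convention (stopping at $p_{\lfloor (k-1)/2\rfloor}$ rather than $p_{\lfloor k/2\rfloor}$) is a helpful clarification that the paper leaves implicit.
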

\begin{proof}
 By \cite[Corollary~3.5]{Milnor}, we have
\[
f^\ast\tau_N\cong \tau_M\oplus \nu_f,
\]
then by \cite[Proposition~3.5]{ad} we have the following equality 
\[
1+p_1\left(f^\ast\tau_N\right)+\cdots p_{\lfloor{\frac{m+k-1}{2}}\rfloor}\left(f^\ast\tau_N\right)=1+p_1\left( \tau_M\oplus \nu_f \right)+\cdots+p_{\lfloor\frac{m+k-1}{2}\rfloor}\left( \tau_M\oplus \nu_f \right) .
\]
 On the other hand by \cite[Theorem~15.3]{Milnor}
\[
\forall i\geq 1 \text{ we have }\;  p_i\left( \tau_M\oplus \nu_f\right)=\sum_{t+j=i}p_t\left(\tau_M\right)p_j\left(\nu_f\right).
\]
where $t,j$ are positives integers and $p_0\left(\tau_M\right)=p_0\left(\nu_f\right)=1$.\\
Thus
\begin{eqnarray*}
1+p_1\left(f^\ast\tau_N\right)+\cdots p_{\lfloor{\frac{m+k-1}{2}}\rfloor}\left(f^\ast\tau_N\right)
&=&1+\sum_{t+j=1}p_t\left(\tau_M\right)p_j\left(\nu_f\right)+\cdots +\sum_{t+j=\lfloor\frac{m+k-1}{2}\rfloor}p_t\left(\tau_M\right)p_j\left(\nu_f\right)\\
&=&
\left(1+p_1\left(\tau_M\right)+\cdots+p_{\lfloor{\frac{m-1}{2}}\rfloor}\left(\tau_M\right)\right)\left(1+{p}_1\left(\nu_f\right)+\cdots +p_{\lfloor{\frac{k-1}{2}}\rfloor}\left(\nu_f\right)\right).
\end{eqnarray*}
Since $\forall i\geq 1, p_i\left(\tau_N\right)=0$ then $\forall i\geq 1, p_i\left(f^\ast\tau_N\right)=0$ and 
\begin{eqnarray*}
&&\left(1+p_1\left(\tau_M\right)+\cdots+p_{\lfloor{\frac{m-1}{2}}\rfloor}\left(\tau_M\right)\right)\left(1+{p}_1\left(\nu_f\right)+\cdots +p_{\lfloor{\frac{k-1}{2}}\rfloor}\left(\nu_f\right)\right)=1.
\end{eqnarray*}
By unicity of dual Pontryagin classes, we have
\[
1+{p}_1\left(\nu_f\right)+\cdots+p_{\lfloor{\frac{k-1}{2}}\rfloor}\left(\nu_f\right)=1+\tilde{p}_1\left(\tau_M\right)+\cdots 
\]
i.e
\begin{eqnarray*}
\begin{cases}
\begin{aligned}
&\tilde{p}_i\left(\tau_M\right)={p}_i\left(\nu_f\right) &&\text{ for } i\leq\lfloor\frac{k-1}{2}\rfloor&\\
&\tilde{p}_i\left(\tau_M\right)=0&& \text{ for }i>\lfloor\frac{k-1}{2}\rfloor&
\end{aligned}
\end{cases}
\end{eqnarray*}
\end{proof}
\begin{proof}[Proof of Theorem~\ref{theorem1}]
Let $f\colon M\looparrowright N$ be an immersion and $Imm\left(M,N;f\right)$ the component of $Imm\left(M,N\right)$ containing $f$. By the Smale-Hirsch theorem we have
\[
Imm\left(M,N;f\right)\simeq \Gamma_{s\left(f\right)}\left( \Stieffel\left(\tau_M,\tau_N\right)\longrightarrow M\right),
\]
and by Lemma~\ref{federico} for all $i>\frac{k}{2}, \tilde{p}_i\left(\tau_M\right)=0$. Since $k$ is odd or  $e\left(\tau_M\right)=0$ and $\tilde{p}_{\frac{k}{2}}\left(\tau_M\right)=0$, by Corollary ~\ref{corodecoro} the Stiefel bundle associated to $\tau_M$ and $\tau_N$
\[
V_m\left(\tau_N\right)\longrightarrow  \Stieffel\left(\tau_M,\tau_N\right)\overset{\pi_1}\longrightarrow M
\]
is rationally trivial. Therefore, its rationalization
\[
V_m\left(\tau_N\right)_{\mathbb{Q}}\longrightarrow \left( \Stieffel\left(\tau_M,\tau_N\right)\right)_{\mathbb{Q}}\longrightarrow M_{\mathbb{Q}}
\] 
is trivial and by Corollary~\ref{corodecoro1}, its  fiberwise rationalization
\[
V_m\left(\tau_N\right)_{\mathbb{Q}}\longrightarrow \left( \Stieffel\left(\tau_M,\tau_N\right)\right)_{\left(\mathbb{Q}\right)}\longrightarrow M
\] 
is trivial.
Since $k>1$ and $N$ simply connected,  then  $V_m\left(\tau_N\right)$ is simply-connected.  Considering that $M$ is of finite type, we have from \cite{moller}
\[
\Gamma_{s\left(f\right)}\left( \Stieffel\left(\tau_M,\tau_N\right)\longrightarrow M\right)\simeq \Gamma_{rs\left(f\right)}
\left(\left( \Stieffel\left(\tau_M,\tau_N\right)\right)_{\left(\mathbb{Q}\right)}\longrightarrow M\right)
\]
where 
\[
r\colon  \Stieffel\left(\tau_M,\tau_N\right)\longrightarrow \left( \Stieffel\left(\tau_M,\tau_N\right)\right)_{\left(\mathbb{Q}\right)}
\] 
is the fiberwise rationalization. Since 
$
\left( \Stieffel\left(\tau_M,\tau_N\right)\right)_{\left(\mathbb{Q}\right)}\longrightarrow M
$
is trivial. We have 
\[
\Gamma_{rs\left(f\right)}
\left(\left( \Stieffel\left(\tau_M,\tau_N\right)\right)_{\left(\mathbb{Q}\right)}\longrightarrow M\right)\simeq Map\left(M,V_m\left(\tau_N\right)_{\mathbb{Q}};\phi_f\right)
\]
where $\phi_f \colon M\longrightarrow V_m\left(\tau_N\right)_{\mathbb{Q}}$ is the component of $rs\left(f\right)$
\end{proof}

\begin{coro} 
\label{coro1}
Under the hypothesis of Theorem~\ref{theorem1}, suppose in addition that the Euler class of $N$ is zero. Then there exists two maps 
 $\phi_f^1\colon M\rightarrow N$ and  $\phi_f^2\colon M\longrightarrow V_m\left(\mathbb{R}^{m+k}\right)$  such that
     \[
   Imm\left(M,N;f\right)\simeq_{\mathbb{Q}}Map\left(M,N,\phi_f^1\right)\times Map\left(M,V_m\left(\mathbb{R}^{m+k}\right);\phi_f^2\right).
   \]
\end{coro}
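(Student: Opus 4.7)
The plan is to combine Theorem~\ref{theorem1} with Corollary~\ref{coro43} applied to the tangent bundle $\tau_N$, and then exploit the splitting of mapping spaces into products.

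First I would apply Theorem~\ref{theorem1}, which already yields a rational homotopy equivalence
\[
Imm\left(M,N;f\right)\simeq_{\mathbb{Q}} Map\left(M,V_m\left(\tau_N\right);\phi_f\right)
\]
for a suitable map $\phi_f\colon M\to V_m(\tau_N)$. Thus the problem reduces to understanding the space $V_m(\tau_N)$ rationally. Here the additional hypothesis $e(\tau_N)=0$ enters: combining it with the vanishing of all Pontryagin classes of $N$, the bundle $\eta=\tau_N$ (of rank $m+k$ over $Y=N$) satisfies the hypotheses of Corollary~\ref{coro43}, so
\[
V_m\left(\tau_N\right)\simeq_{\mathbb{Q}} N\times V_m\left(\mathbb{R}^{m+k}\right).
\]

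Next I would transport this rational equivalence to the mapping space. Since $M$ is simply connected and of finite type, and since both $V_m(\tau_N)$ and $N\times V_m(\mathbb{R}^{m+k})$ are simply connected (using $k\geq 2$), a rational equivalence of targets induces a rational equivalence on the corresponding components of mapping spaces; more precisely, writing the fiberwise rationalization (as in the proof of Theorem~\ref{theorem1}, using \cite{moller}) of the trivial bundle $V_m(\tau_N)\to \ast$ pulled back over $M$ gives
\[
Map\left(M,V_m\left(\tau_N\right);\phi_f\right)\simeq_{\mathbb{Q}} Map\left(M,N\times V_m\left(\mathbb{R}^{m+k}\right);\phi_f\right),
\]
where on the right $\phi_f$ is identified with the composite under the rational equivalence.

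Finally, I would write $\phi_f=(\phi_f^1,\phi_f^2)$ with $\phi_f^1\colon M\to N$ and $\phi_f^2\colon M\to V_m(\mathbb{R}^{m+k})$ being its two coordinates, and use the canonical homeomorphism
\[
Map\left(M,A\times B;(a,b)\right)\cong Map\left(M,A;a\right)\times Map\left(M,B;b\right),
\]
to conclude. The main obstacle I anticipate is step two, the passage from a rational equivalence of targets to a rational equivalence on components of mapping spaces: one needs $M$ of finite type and the targets simply connected, and then one argues via Møller's theorem on fiberwise rationalization of section spaces, exactly as in the proof of Theorem~\ref{theorem1}. Once this is in place, the rest is formal.
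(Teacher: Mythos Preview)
Your proposal is correct and follows essentially the same route as the paper: apply Theorem~\ref{theorem1}, then Corollary~\ref{coro43} to split $V_m(\tau_N)$ rationally as $N\times V_m(\mathbb{R}^{m+k})$, and finally use the product decomposition of mapping spaces. You are in fact more careful than the paper in justifying the passage from a rational equivalence of targets to one of mapping-space components; the paper leaves this step implicit.
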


\begin{proof}
In fact, if the Euler class of $N$ is zero and  for all $i\geq 1\; p_i\left(\tau_N\right)=0$ by Corollary~\ref{coro43} we have 
 \[
    V_m\left(\tau_N\right)\simeq_{\mathbb{Q}} N\times V_m\left(\mathbb{R}^{m+k}\right).
   \]
    Therefore by Theorem~\ref{theorem1}  we deduce the following
      \begin{eqnarray*}
      Imm\left(M,N;f\right)&\simeq_{\mathbb{Q}}&Map\left(M,N\times V_m\left(\mathbb{R}^{m+k}\right);\phi_f\right)\\
      &\simeq_{\mathbb{Q}}&Map\left(M,N;\phi_f^1\right)\times Map\left(M, V_m\left(\mathbb{R}^{m+k}\right);\phi_f^2\right) \qedhere     
       \end{eqnarray*}
\end{proof}
 \subsection{Rational homotopy of the space of immersions of manifold in an Euclidean space}
 In this section we study the rational homotopy type of the space of immersions of manifolds in an Euclidean space. According to the codimension, we will distinguish two cases.
\subsubsection{Odd codimension case}
\begin{thm}
\label{casodd}
Let $M$ be a manifold of dimension $m\geq 0$, simply connected and of finite type. If $k\geq 3$ is an odd integer, then each connected component of $Imm\left(M,\mathbb{R}^{m+k}\right)$ has the rational homotopy type of a product of Eilenberg-Mac Lane spaces which only depends  on the dimension $m$, the codimension $k$ and the rational Betti numbers of $M$.
\end{thm}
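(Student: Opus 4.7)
The plan is to apply Theorem~\ref{theorem1} with $N=\mathbb{R}^{m+k}$ and then compute the rational homotopy type of the resulting mapping space. Since $\mathbb{R}^{m+k}$ is contractible, the tangent bundle $\tau_{\mathbb{R}^{m+k}}$ is trivial, so in particular all Pontryagin classes of $N$ vanish. Because $k\geq 3$ is odd, the first bullet in the hypothesis of Theorem~\ref{theorem1} is satisfied, so for any immersion $f\colon M\looparrowright\mathbb{R}^{m+k}$,
\[
Imm\left(M,\mathbb{R}^{m+k};f\right)\simeq_{\mathbb{Q}} Map\left(M,V_m\left(\tau_{\mathbb{R}^{m+k}}\right);\phi_f\right).
\]
Triviality of $\tau_{\mathbb{R}^{m+k}}$ gives $V_m\left(\tau_{\mathbb{R}^{m+k}}\right)\cong\mathbb{R}^{m+k}\times V_m\left(\mathbb{R}^{m+k}\right)$, which deformation retracts onto the classical Stiefel manifold $V_m\left(\mathbb{R}^{m+k}\right)$.

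The second step is to identify the rational homotopy type of $V_m\left(\mathbb{R}^{m+k}\right)=SO(m+k)/SO(k)$ when $k$ is odd. I would use the Serre spectral sequence of $SO(k)\to SO(m+k)\to V_m\left(\mathbb{R}^{m+k}\right)$, together with the fact that both $SO(k)$ (for $k$ odd) and $SO(m+k)$ have rational cohomology a free exterior algebra on odd-degree generators, with the inclusion $SO(k)\hookrightarrow SO(m+k)$ inducing a surjection on rational cohomology in the lowest generators. This shows that $H^{\ast}\left(V_m\left(\mathbb{R}^{m+k}\right),\mathbb{Q}\right)$ is itself a free exterior algebra on odd-degree generators whose list of degrees $n_1,\ldots,n_r$ depends only on $m$ and $k$. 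By formality of compact homogeneous spaces, $V_m\left(\mathbb{R}^{m+k}\right)\simeq_{\mathbb{Q}}S^{n_1}\times\cdots\times S^{n_r}$, a product of odd spheres.

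Each odd sphere $S^{n_i}$ is rationally $K(\mathbb{Q},n_i)$, so $V_m\left(\mathbb{R}^{m+k}\right)$ is rationally a product of Eilenberg-Mac Lane spaces and in particular a rational H-space. The mapping space $Map\left(M,V_m\left(\mathbb{R}^{m+k}\right)\right)$ is then also a rational H-space, so all its connected components are homotopy equivalent. Combined with the standard computation
\[
Map\left(M,K(\mathbb{Q},n);\phi\right)\simeq\prod_{j=0}^{n}K\left(\mathbb{Q},n-j\right)^{b_j(M)},
\]
where $b_j(M)$ is the $j$-th rational Betti number of $M$, this yields
\[
Imm\left(M,\mathbb{R}^{m+k};f\right)\simeq_{\mathbb{Q}}\prod_{i=1}^{r}\prod_{j=0}^{n_i}K\left(\mathbb{Q},n_i-j\right)^{b_j(M)},
\]
a product of Eilenberg-Mac Lane spaces depending only on $m$, $k$ and the rational Betti numbers of $M$.

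The main obstacle I expect is the second step: pinning down precisely that for $k$ odd all generators of the minimal Sullivan model of $V_m\left(\mathbb{R}^{m+k}\right)$ sit in odd degrees, with zero differential, and that the list of degrees is a function of $m$ and $k$ alone. One needs a careful case analysis according to the parities of $m$ and $m+k$, comparing the odd-degree generators of $SO(m+k)$ to those of $SO(k)$; alternatively, the statement can be read off from the Chevalley-Eilenberg model of the homogeneous space $SO(m+k)/SO(k)$. The remaining steps (computing mapping spaces into $K(\mathbb{Q},n)$ and asserting H-space structure so that components coincide rationally) are then standard.
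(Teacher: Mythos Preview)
Your proposal is correct and follows essentially the same route as the paper: reduce via Theorem~\ref{theorem1} (the paper routes this through Corollary~\ref{coro1}, which is Theorem~\ref{theorem1} plus the splitting $V_m(\tau_N)\simeq_{\mathbb{Q}} N\times V_m(\mathbb{R}^{m+k})$) to a mapping space into $V_m(\mathbb{R}^{m+k})$, identify the latter for $k$ odd as rationally a product of odd-degree Eilenberg--Mac~Lane spaces, and then compute the components of the mapping space. The only cosmetic difference is that the paper cites \cite[Proposition~3.1]{pierre77} for the rational type of the Stiefel manifold and \cite[Theorem~1]{moller} for the mapping-space computation, whereas you sketch both arguments directly.
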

Moreover, we have an explicit description of each component of $Imm\left(M,\mathbb{R}^{m+k}\right)$.  Precisely we have.
\begin{prop}
\label{corod}
Let  $f \colon M\looparrowright \mathbb{R}^{m+k}$an immersion, and let  $Imm\left(M ,\mathbb{R}^{m+k};f\right)$ be the connected component of $Imm\left(M ,\mathbb{R}^{m+k}\right)$ containing $f$. Then, we have the following rational homotopy equivalence:
\[
Imm\left(M ,\mathbb{R}^{m+k};f\right)\simeq_{\mathbb{Q}}
\begin{cases}
\begin{aligned}
&\prod_{\lceil\frac{k}{2}\rceil\leq i\leq \lfloor\frac{m+k}{2}\rfloor}\prod_{1\leq q\leq 4i-1}K\left(H^{4i-1-q}\left(M,\mathbb{Q}\right),q\right)& &\text{ if } &m \text{ even}\\
&\prod_{\lceil\frac{k}{2}\rceil\leq i\leq \lfloor\frac{m+k-1}{2}\rfloor}\prod_{1\leq q\leq 4i-1}K\left(H^{4i-1-q}\left(M,\mathbb{Q}\right),q\right)\times {A}&&\text{ if }& m \text{ odd}
\end{aligned}
\end{cases}
\]
where 
\begin{eqnarray*}
A={\prod_{1\leq j\leq m+k-1}K\left(H^{m+k-1-j}\left(M,\mathbb{Q}\right),j\right)}.
\end{eqnarray*}
\end{prop}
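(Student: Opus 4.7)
The plan is to combine Theorem~\ref{theorem1} with the classical computation of the rational homotopy type of Stiefel manifolds and the standard description of mapping spaces into Eilenberg--Mac Lane spaces.

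First I apply Theorem~\ref{theorem1} to $N=\mathbb{R}^{m+k}$. Since $\mathbb{R}^{m+k}$ is contractible, all its Pontryagin classes vanish, and $k$ is odd by hypothesis, so the theorem gives
\[
Imm(M,\mathbb{R}^{m+k};f)\simeq_{\mathbb{Q}} Map(M, V_m(\tau_{\mathbb{R}^{m+k}});\phi_f).
\]
The tangent bundle $\tau_{\mathbb{R}^{m+k}}$ is trivial, so $V_m(\tau_{\mathbb{R}^{m+k}})\cong \mathbb{R}^{m+k}\times V_m(\mathbb{R}^{m+k})$ is homotopy equivalent to the Stiefel manifold $V_m(\mathbb{R}^{m+k})$, and the problem reduces to computing $Map(M,V_m(\mathbb{R}^{m+k});\phi_f)$ rationally.

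Next I identify the rational homotopy type of $V_m(\mathbb{R}^{m+k})=SO(m+k)/SO(k)$ using the classical formula for $H^*(SO(n);\mathbb{Q})$ (an exterior algebra on generators of degrees $4i-1$ with $1\leq i\leq \lfloor(n-1)/2\rfloor$, plus an Euler generator of degree $n-1$ when $n$ is even). Since $k$ is odd, $H^*(SO(k);\mathbb{Q})$ has only $x$-generators, and a minimal Sullivan model argument for the fibration $SO(k)\to SO(m+k)\to V_m(\mathbb{R}^{m+k})$ identifies the model of the quotient with the exterior algebra on the ``new'' generators. This gives, if $m$ is even,
\[
V_m(\mathbb{R}^{m+k})\simeq_{\mathbb{Q}}\prod_{\lceil k/2\rceil\leq i\leq \lfloor(m+k)/2\rfloor} S^{4i-1},
\]
and if $m$ is odd,
\[
V_m(\mathbb{R}^{m+k})\simeq_{\mathbb{Q}}\Bigl(\prod_{\lceil k/2\rceil\leq i\leq \lfloor(m+k-1)/2\rfloor} S^{4i-1}\Bigr)\times S^{m+k-1}.
\]
All sphere factors are odd-dimensional and therefore rationally equivalent to Eilenberg--Mac Lane spaces of the form $K(\mathbb{Q},2j-1)$.

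Finally, I compute the mapping space factor-by-factor. Because $K(\mathbb{Q},n)$ is a topological abelian group, every connected component of $Map(M,K(\mathbb{Q},n))$ is homotopy equivalent to the identity component, and the standard adjunction computation yields
\[
Map(M,K(\mathbb{Q},n);c)\simeq \prod_{1\leq q\leq n} K(H^{n-q}(M;\mathbb{Q}),q).
\]
Applying this to each odd-sphere factor of the decomposition of $V_m(\mathbb{R}^{m+k})_{\mathbb{Q}}$ and taking the product yields precisely the expression stated in Proposition~\ref{corod}, and shows in particular that the rational homotopy type is independent of the chosen component $\phi_f$. The only real obstacle is the bookkeeping to ensure the index ranges match, which reduces to the elementary identities $(k+1)/2=\lceil k/2\rceil$ and the correct interpretation of $\lfloor (m+k)/2\rfloor$ for each parity of $m$.
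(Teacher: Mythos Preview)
Your proof is correct and follows essentially the same route as the paper: reduce to a mapping space into $V_m(\mathbb{R}^{m+k})$ via Theorem~\ref{theorem1}, decompose the Stiefel manifold rationally as a product of odd spheres (equivalently, Eilenberg--Mac Lane spaces), and then compute each mapping-space factor. The only cosmetic differences are that the paper routes the first step through Corollary~\ref{coro1} (which already splits off the contractible $\mathbb{R}^{m+k}$ factor), cites \cite[Proposition~3.1]{pierre77} for the rational splitting of the Stiefel manifold, and cites \cite[Theorem~1]{moller} for the mapping space into $K(\mathbb{Q},n)$ rather than invoking the topological-abelian-group argument you sketch.
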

\begin{proof}[Proof of Theorem~\ref{casodd} and Proposition~\ref{corod}]
The Euler class and all the Pontryagin classes of $ \;\mathbb{R}^{m+k}$ are zero. Since $M$ is simply connected, of finite type and $k\geq 3$, by Corollary~\ref{coro1} we have
   \begin{eqnarray}
  \label{12}
  Imm\left(M,\mathbb{R}^{m+k};f\right)\simeq_{\mathbb{Q}}Map\left(M,\mathbb{R}^{m+k};\phi_f^1\right)\times Map\left(M,\V_m\left(\mathbb{R}^{m+k}\right);\phi_f^2\right).
 \end{eqnarray}
Since $\mathbb{R}^{m+k}$ is contractible, the space $Map\left(M,\mathbb{R}^{m+k};\phi_f^1\right)$ is contractible. Therefore, the rational homotopy equivalence (\ref{12}) becomes  
   \begin{eqnarray*}
  \label{13}
  Imm\left(M,\mathbb{R}^{m+k};f\right)\simeq_{\mathbb{Q}} Map\left(M,V_m\left(\mathbb{R}^{m+k}\right);\phi_f^2\right).
 \end{eqnarray*}
 Thus we need to understand the rational homotopy of such a component of a mapping space of $M$ into the Stiefel manifold $V_m\left(\mathbb{R}^{m+k}\right)$.
Since $k$ is odd by \cite[Proposition~3.1]{pierre77} the Stiefel manifold  $V_m\left(\mathbb{R}^{m+k}\right)$ has the rational homotopy of a product of Eilenberg-Mac Lane space depending on $m$ and $k$. Moreover any component of space of $M$ into such a product of Eilenberg-Mac Lane spaces is itself a product of Eilenberg-Mac Lane spaces which only depends on $m$ and $k$ and on the Betti numbers of $M$. This establishes Theorem~\ref{casodd}.\\
More precisely we have  
\[
V_m\left(\mathbb{R}^{m+k}\right)\simeq_{\mathbb{Q}}
\begin{cases}
\begin{aligned}
&\prod_{\lceil\frac{k}{2}\rceil\leq\ell\leq \lfloor\frac{m+k-1}{2}\rfloor}K\left(\mathbb{Q},4\ell-1\right)\times K\left(\mathbb{Q},m+k-1\right)&&\text{ if $m$ is odd}&\\
&\prod_{\lceil\frac{k}{2}\rceil\leq\ell\leq \lfloor\frac{m+k}{2}\rfloor}K\left(\mathbb{Q},4\ell-1\right) &&\text{ if $m$ is even}&.
\end{aligned}
\end{cases}
\]
Consequently 
\begin{eqnarray*}
&& Map\left(M,V_m\left(\mathbb{R}^{m+k}\right)_{\mathbb{Q}};\phi_f\right)\\
&\simeq&
\begin{cases}
\begin{aligned}
 &Map\left(M, \prod_{\lceil\frac{k}{2}\rceil\leq \ell\leq \lfloor\frac{m+k-1}{2}\rfloor}K\left(\mathbb{Q},4\ell-1\right)\times K\left(\mathbb{Q},m+k-1\right);\phi_f\right) &&\text{ if $m$ is odd}&\\
&Map\left(M, \prod_{\lceil\frac{k}{2}\rceil\leq \ell\leq \lfloor\frac{m+k}{2}\rfloor}K\left(\mathbb{Q},4\ell-1\right);\phi_f\right)&&\text{ if $m$ is even}&
\end{aligned}
\end{cases}
\\
&\simeq&
\begin{cases}
\begin{aligned}
&\prod_{\lceil\frac{k}{2}\rceil\leq \ell\leq \lfloor\frac{m+k-1}{2}\rfloor} Map\left(M,K\left(\mathbb{Q},4\ell-1\right);\phi_f^{\ell}\right)\times Map\left(M,K\left(\mathbb{Q},m+k-1\right);\phi_f^{m+k-1}\right)&&\text{ if $m$ is odd}&\\
&\prod_{\lceil\frac{k}{2}\rceil\leq \ell\leq \lfloor\frac{m+k}{2}\rfloor} Map\left(M,K\left(\mathbb{Q},4\ell-1\right);\phi_f^{\ell}\right)&&\text{if $m$ is even}&
\end{aligned}
\end{cases}
 \end{eqnarray*}
 By \cite[Theorem~1]{moller} we have 
 \[
 Map\left(M,K\left(\mathbb{Q},4\ell-1\right);\phi_f^{\ell}\right)\simeq_{\mathbb{Q}}\prod_{1\leq q\leq 4\ell-1}K\left(H^{4\ell-1-q}\left(M,\mathbb{Q}\right),q\right),
  \]
  and
  \[
 Map\left(M,K\left(\mathbb{Q},m+k-1\right);\phi_f^{m+k-1}\right)\simeq_{\mathbb{Q}}\prod_{1\leq q\leq m+k-1}K\left(H^{m+k-1-q}\left(M,\mathbb{Q}\right),q\right).
  \]
  This ends the proof.
\end{proof}

\subsubsection{Even codimension case}
In all this section $k\geq 2$ is an even integer. This case is slightly different from the odd codimensional case because when $k$ is even, the Stiefel manifold $V_m\left(\mathbb{R}^{m+k}\right)$ has the rational homotopy of a product of Eilenberg-Mac Lane space with an even dimensional sphere $S^k$. Because of this we need some extra hypothesis on $M$ to get an easy description the rational homotopy of component of $Imm\left(M,\mathbb{R}^{m+k}\right)$. We will prove the following result.
\begin{thm}
\label{caseven}
Let $M$ be a manifold of dimension $m\geq0$, simply connected and of finite type. Assume that the following two statement hold
\begin{itemize}
\item $e\left(\tau_M\right)=0,$ and
\item
$\tilde{p}_{\frac{k}{2}}\left(\tau_M\right)=0$ then all the component of $Imm\left(M,\mathbb{R}^{m+k}\right)$.
\end{itemize}
Then each connected component of $Imm\left(M,\mathbb{R}^{m+k}\right)$ has the rational homotopy type of a product of  some component of $Map\left(M,S^k\right)$ with a product of Eilenberg-Mac Lane spaces which only depends on $m$, $k$ and the rationals Betti numbers of $M$.\\
If moreover $H^k\left(M,\mathbb{Q}\right)=0$ then all the components of $Imm\left(M,\mathbb{R}^{m+k}\right)$ have the same rational homotopy type as well as all the components of $Map\left(M,S^k\right)$.
\end{thm}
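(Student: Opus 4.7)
The plan is to parallel the argument for Theorem~\ref{casodd}. First I would apply Corollary~\ref{coro1} with $N=\mathbb{R}^{m+k}$: the Euler class and all Pontryagin classes of $\mathbb{R}^{m+k}$ vanish, while by Lemma~\ref{federico} together with the assumption $\tilde{p}_{k/2}(\tau_M)=0$ one has $\tilde{p}_i(\tau_M)=0$ for all $i\geq k/2$ (if no immersion exists the theorem is vacuous). Combined with $e(\tau_M)=0$ this puts us in the hypotheses of the corollary, and since $Map(M,\mathbb{R}^{m+k};\phi_f^1)$ is contractible we obtain
\[
Imm(M,\mathbb{R}^{m+k};f)\simeq_{\mathbb{Q}} Map(M,V_m(\mathbb{R}^{m+k});\phi_f^2).
\]

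The next and technically most delicate step is to pin down the rational homotopy type of the oriented Stiefel manifold $V_m(\mathbb{R}^{m+k})=SO(m+k)/SO(k)$ when $k$ is even. Using the classical Serre--Borel computation of the cohomology of this compact homogeneous space (an exterior algebra with one even generator $x_k$ of degree $k$ and a collection of odd generators) together with its formality, one builds a minimal Sullivan model of the shape $\left(\Lambda(x_k,y_{2k-1},z_1,z_2,\dots),d\right)$ with $dy_{2k-1}=x_k^2$ and all other generators closed. This model splits as the tensor product of the minimal model of $S^k$ with that of a product of odd Eilenberg--Mac Lane spaces, yielding a rational equivalence
\[
V_m(\mathbb{R}^{m+k})\simeq_{\mathbb{Q}} S^k \times \prod_{\ell} K(\mathbb{Q},4\ell-1)
\]
(with an additional $K(\mathbb{Q},m+k-1)$ factor when $m$ is even, so that $m+k-1$ is odd), where $\ell$ runs over a range depending on $m,k$ that mirrors Proposition~\ref{corod}. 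The appearance of the $S^k$ factor is the key new feature: for $k$ odd this would have been a further $K(\mathbb{Q},k)$, but for $k$ even the even-dimensional sphere does not rationally collapse to an Eilenberg--Mac Lane space. This step is the main obstacle of the proof and has to be argued carefully on Sullivan models.

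Substituting this decomposition, using that mapping into a product is the product of mapping spaces, and applying \cite[Theorem~1]{moller} to each Eilenberg--Mac Lane factor yields
\[
Imm(M,\mathbb{R}^{m+k};f)\simeq_{\mathbb{Q}} Map(M,S^k;\phi_f^{S^k})\times \prod_{\ell,q} K(H^{4\ell-1-q}(M,\mathbb{Q}),q).
\]
Since every component of a mapping space into $K(\mathbb{Q},n)$ has the same rational homotopy type (the target being a topological abelian group), the Eilenberg--Mac Lane product depends only on $m$, $k$ and the rational Betti numbers of $M$, establishing the first assertion.

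For the last statement, assume in addition $H^k(M,\mathbb{Q})=0$. The rational Postnikov tower of $S^k$ (for $k$ even) is the principal $K(\mathbb{Q},2k-1)$-bundle
\[
K(\mathbb{Q},2k-1)\to S^k_{\mathbb{Q}}\to K(\mathbb{Q},k)
\]
with $k$-invariant $x^2$. Applying $Map(M,-)$ produces a principal bundle whose base $Map(M,K(\mathbb{Q},k))$ has $\pi_0=H^k(M,\mathbb{Q})=0$ and is hence connected. The topological abelian group $Map(M,K(\mathbb{Q},2k-1))$ acts by homeomorphisms on $Map(M,S^k_{\mathbb{Q}})$, transitively permuting the components that lie over the unique component of the base, i.e.\ all of them. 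Consequently all components of $Map(M,S^k)$ are rationally equivalent, and combined with the decomposition above this yields the corresponding statement for $Imm(M,\mathbb{R}^{m+k})$.
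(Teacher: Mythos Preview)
Your proposal is correct. The first part matches the paper's proof: Corollary~\ref{coro1} reduces to $Map(M,V_m(\mathbb{R}^{m+k}))$, and the rational splitting of the Stiefel manifold into $S^k$ times odd Eilenberg--Mac Lane spaces gives the product decomposition. (The paper simply cites this splitting from \cite[Proposition~3.1]{pierre77} rather than rederiving it, so the step you flag as the ``main obstacle'' is in fact dispatched by reference.)

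Where you genuinely diverge is in the final statement under $H^k(M,\mathbb{Q})=0$. The paper isolates this as Lemma~\ref{yvess} and proves it via Haefliger's model for mapping spaces: with $(\Lambda(x,y),\,dy=x^2)$ the minimal model of $S^k$, any section model $\sigma\colon A\otimes\Lambda(x,y)\to A$ can be homotoped so that $\sigma(x)=0$ (using $H^k(A)=0$) and then $\sigma(y)=0$ (after replacing $y$ by $y-\sigma(y)$), so every component of $Map(M,S^k)$ has the same Haefliger model as that of the constant map. You instead exploit the rational Postnikov fibration $K(\mathbb{Q},2k-1)\to S^k_{\mathbb{Q}}\to K(\mathbb{Q},k)$: applying $Map(M,-)$ gives a principal $Map(M,K(\mathbb{Q},2k-1))$-fibration over a base that is connected because $\pi_0=H^k(M,\mathbb{Q})=0$, and the group action by homeomorphisms then permutes the components of the total space transitively. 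Your route is more geometric and sidesteps the Haefliger machinery entirely; the paper's route stays within the Sullivan-model framework used throughout and generalizes more readily to targets with longer rational Postnikov towers.
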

Moreover, we have the following description of each component of $Imm\left(M,\mathbb{R}^{m+k}\right)$.
\begin{prop}
\label{coreven}
Let $f \colon M\looparrowright \mathbb{R}^{m+k}$ be an immersion, and let  $Imm\left(M ,\mathbb{R}^{m+k},f\right)$ be the connected component of $Imm\left(M ,\mathbb{R}^{m+k}\right)$ containing $f$. Under the hypothesis of  (i) and (ii)  of Theorem~\ref{caseven}, we have the following rational homotopy equivalence:
\[
Imm\left(M ,\mathbb{R}^{m+k};f\right)\simeq_{\mathbb{Q}}
\begin{cases}
\begin{aligned}
&\prod_{i=\frac{k}{2}+1}^{\lfloor\frac{m+k-1}{2}\rfloor}\prod_{1\leq q\leq 4i-1}K\left(H^{4i-1-q}\left(M,\mathbb{Q}\right),q\right )\times {Map\left(M,S^k;\tilde{f}\right)}&&\text{ if } &m \text{ odd}\\
&\prod_{i=\frac{k}{2}+1}^{\lfloor\frac{m+k-1}{2}\rfloor}\prod_{1\leq q\leq 4i-1}K\left(H^{4i-1-q}\left(M,\mathbb{Q}\right),q\right) \times{Map\left(M,S^k;\tilde{f}\right) }\times {A}&&\text{ if }& m \text{ even},
\end{aligned}
\end{cases}
\]
where
$A={\prod_{1\leq j\leq m+k-1}K\left(H^{m+k-1-j}\left(M,\mathbb{Q}\right),j\right)}$ and $\tilde{f}\colon M\rightarrow S^k$ is some map depending on the immersion $f$.
\end{prop}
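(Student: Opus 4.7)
My approach is to mirror the proof of Proposition~\ref{corod} (the odd codimension case) step by step, with the sole substantive difference being that $V_m(\mathbb{R}^{m+k})$ carries a genuine $S^k$-factor rationally when $k$ is even; since $S^k$ is not rationally an Eilenberg--Mac Lane space for even $k$, this factor survives into the final decomposition as the $Map(M,S^k;\tilde{f})$ piece.

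The first move will be to invoke Corollary~\ref{coro1} with $N=\mathbb{R}^{m+k}$. The Euler class and all Pontryagin classes of $\mathbb{R}^{m+k}$ vanish for trivial reasons, while the assumptions $e(\tau_M)=0$ and $\tilde{p}_{k/2}(\tau_M)=0$ are exactly the hypotheses of the even-$k$ branch of Theorem~\ref{theorem1}. Hence Corollary~\ref{coro1} provides the splitting
\begin{equation*}
Imm(M,\mathbb{R}^{m+k};f)\simeq_{\mathbb{Q}} Map(M,\mathbb{R}^{m+k};\phi_f^1)\times Map\bigl(M,V_m(\mathbb{R}^{m+k});\phi_f^2\bigr),
\end{equation*}
and the first factor is contractible because $\mathbb{R}^{m+k}$ is. The problem thus reduces to understanding $Map(M,V_m(\mathbb{R}^{m+k});\phi_f^2)$ rationally.

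The second step is to produce the rational splitting of $V_m(\mathbb{R}^{m+k})$ for even $k$. Applying Corollary~\ref{coro43} (or specializing Theorem~\ref{modelfibredestiefel}) to the trivial bundle over a point yields a Sullivan model whose generators are $e_k$ in degree $k$, odd classes $\bar{a}_\ell$ in degree $4\ell-1$ for $k/2\leq \ell\leq \lfloor(m+k-1)/2\rfloor$, and, only when $m$ is even, an extra generator $\bar{e}_{m+k}$ in degree $m+k-1$. The only nontrivial differential is $d\bar{a}_{k/2}=-e_k^2$, which is precisely the minimal model of $S^k$ for $k$ even, while all remaining generators are closed. Consequently $V_m(\mathbb{R}^{m+k})\simeq_{\mathbb{Q}} S^k\times \prod_{\ell=k/2+1}^{\lfloor(m+k-1)/2\rfloor}K(\mathbb{Q},4\ell-1)$, with an additional factor $K(\mathbb{Q},m+k-1)$ when $m$ is even.

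The third and final step is to apply the exponential law to split $Map(M,-)$ over this product: each Eilenberg--Mac Lane factor $Map(M,K(\mathbb{Q},n);\psi)$ simplifies by \cite[Theorem~1]{moller} to $\prod_{1\leq q\leq n} K(H^{n-q}(M,\mathbb{Q}),q)$, and the $S^k$ piece contributes a factor $Map(M,S^k;\tilde{f})$, where $\tilde{f}$ is the projection of $\phi_f^2$ to the $S^k$ summand. Reassembling the factors recovers the two case-by-case formulas of the proposition. The principal obstacle compared to the odd case is precisely the persistence of $S^k$: it is what prevents a description purely in terms of Eilenberg--Mac Lane spaces, forces the explicit $Map(M,S^k;\tilde{f})$ factor, and makes it necessary to keep careful track, through the splitting of the Sullivan model, of the path component of $\phi_f^2$ in order to exhibit $\tilde{f}$ unambiguously.
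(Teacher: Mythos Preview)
Your proof is correct and follows essentially the same route as the paper: invoke Corollary~\ref{coro1} to reduce to $Map(M,V_m(\mathbb{R}^{m+k});\phi_f^2)$, split $V_m(\mathbb{R}^{m+k})$ rationally as $S^k$ times a product of Eilenberg--Mac Lane spaces, and then apply the exponential law together with \cite[Theorem~1]{moller}. The only cosmetic difference is that the paper quotes \cite[Proposition~3.1]{pierre77} for the rational splitting of the Stiefel manifold, whereas you read it off directly from the Sullivan model of Theorem~\ref{modelfibredestiefel}; both arguments are equivalent.
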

\begin{rmq} Remark that when $k$ is even in general  the rational homotopy type of $ \; Map\left(M,S^k,\tilde{f}\right)$ depends on the homotopy class of $\tilde{f}
$ (see \cite{Moller1} for example). Therefore, when the codimension is even, the rational homotopy type depends on the choice of immersion. 
\end{rmq}
For the proof of the second part of Theorem~\ref{caseven} we need the following lemma.
\begin{lem}
\label{yvess}
Let $M$ be a manifold of dimension $m$ and $k\geq0$ an even integer.  If $H^k\left(M, \mathbb{Q}\right)=0$, then all the components of $Map\left(M,S^k\right)$ have the same rational homotopy type.
\end{lem}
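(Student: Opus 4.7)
My plan is to pass to the rationalization and exploit the Postnikov decomposition of an even-dimensional rational sphere. Since $k$ is even, a minimal Sullivan model of $S^k$ is $\Lambda(x_k, y_{2k-1})$ with $dx_k = 0$ and $dy_{2k-1} = x_k^2$, so $(S^k)_{\mathbb{Q}}$ is the homotopy fibre of the squaring map $x_k^2 \colon K(\mathbb{Q}, k) \to K(\mathbb{Q}, 2k)$. In particular $(S^k)_{\mathbb{Q}}$ is the total space of a principal fibration
\[
K(\mathbb{Q}, 2k-1) \longrightarrow (S^k)_{\mathbb{Q}} \longrightarrow K(\mathbb{Q}, k)
\]
whose structure group is the abelian topological group $K(\mathbb{Q}, 2k-1)$.

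Applying $Map(M,-)$ pointwise yields a principal fibration of mapping spaces
\[
Map(M, K(\mathbb{Q}, 2k-1)) \longrightarrow Map(M, (S^k)_{\mathbb{Q}}) \longrightarrow Map(M, K(\mathbb{Q}, k))
\]
on which $Map(M, K(\mathbb{Q}, 2k-1))$ acts by pointwise translation in the fibre. The hypothesis $H^k(M, \mathbb{Q}) = 0$ forces $\pi_0 Map(M, K(\mathbb{Q}, k)) = H^k(M, \mathbb{Q}) = 0$, so the base is path-connected. Principality of the action then lets me identify $\pi_0 Map(M, (S^k)_{\mathbb{Q}})$ with the abelian group $\pi_0 Map(M, K(\mathbb{Q}, 2k-1)) = H^{2k-1}(M, \mathbb{Q})$ acting on itself by translation, so this action is transitive on path components. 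Since each translation is a self-homeomorphism of the total space, all path components of $Map(M, (S^k)_{\mathbb{Q}})$ are pairwise homotopy equivalent.

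To descend back to $Map(M, S^k)$ I will invoke Møller's theorem \cite{moller} (already used in the proof of Theorem~\ref{theorem1}): for each $f \colon M \to S^k$ the rationalization $r \colon S^k \to (S^k)_{\mathbb{Q}}$ induces a rational equivalence $Map(M, S^k; f) \simeq_{\mathbb{Q}} Map(M, (S^k)_{\mathbb{Q}}; r \circ f)$. Since the right-hand side is independent of $f$ up to homotopy equivalence by the previous paragraph, the rational homotopy type of the left-hand side is also independent of $f$, which is the claim of the lemma. The most delicate step is the transitivity assertion: one must justify that $Map(M,-)$ really produces a principal bundle of mapping spaces from a principal bundle of spaces, and that the connecting map $H^{k-1}(M, \mathbb{Q}) \to H^{2k-1}(M, \mathbb{Q})$ at the basepoint of the long exact homotopy sequence of the induced fibration vanishes; this vanishing reflects the fact that the derivative of squaring at zero is zero and therefore obstructs nothing on $\pi_0$.
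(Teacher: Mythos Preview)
Your proof is correct and takes a genuinely different route from the paper's. The paper argues algebraically via the Haefliger model: given the section model $\sigma\colon A\otimes\Lambda(x,y)\to A$ of a map $\psi\colon M\to S^k$, the hypothesis $H^k(M,\mathbb{Q})=0$ lets one homotope $\sigma$ so that $\sigma(x)=0$, and then the substitution $y\mapsto y-\sigma(y)$ forces $\sigma|_{\Lambda(x,y)}=0$, i.e.\ the model of the constant map; hence every component has the same Haefliger model. You instead work topologically, using the two--stage Postnikov tower of $(S^k)_{\mathbb{Q}}$ to obtain a principal $K(\mathbb{Q},2k-1)$--fibration on mapping spaces and letting the structure group permute the components. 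Your approach avoids the Haefliger machinery and makes the homogeneity visible through an explicit group action; the paper's approach stays inside Sullivan's framework and shows directly which generator forces which cohomological hypothesis, so it adapts more transparently to targets with longer minimal models.

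One minor remark: the worry you flag about the connecting map $H^{k-1}(M,\mathbb{Q})\to H^{2k-1}(M,\mathbb{Q})$ is unnecessary for your argument. Transitivity of the $\pi_0$--action follows already from surjectivity of $\pi_0(\text{fibre})\to\pi_0(\text{total space})$ in the long exact sequence, which needs only $\pi_0(\text{base})=0$; the connecting map governs freeness of the action, not transitivity, and you never use freeness. Your heuristic that the derivative of squaring at zero vanishes is nonetheless correct: a loop at the constant map corresponds to a class in $H^1(S^1,\ast)\otimes H^{k-1}(M)$, and its cup square lands in $H^2(S^1,\ast)\otimes H^{\ast}(M)=0$.
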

\begin{proof}
A model of the component of mapping space $Map\left(M,S^k;\psi\right)$ is given by the Haefliger model \cite{HA82} of the trivial bundle 
\[
M\times S^k\longrightarrow M
\]
associated to the section 
\[
s\colon M\longrightarrow M\times S^k, x\mapsto \left(x,\psi\left(x\right)\right).
\]
Let $\left(A,d_A\right)$ a finite dimensional cgda model of $M$ and $\left(\Lambda\left(x,y\right),dy=x^2\right)$ be the minimal model of $S^k$ with $\mid{x}\mid=k, \mid{y}\mid=2k-1$. Let
\[
\sigma\colon A\otimes \Lambda\left(x,y\right)\longrightarrow A
\]
be a model of $s$ such that $\sigma_{\mid A}=id_A$.\\
The Haefliger model of $Map\left(M,S^k;\psi\right)$ can explicitly  be constructed from the cdga $\left(A,d_A\right)\otimes\left(\Lambda{\left(x,y\right)},d \right)$ and the cgda map $\sigma$.\\
Note that $\sigma\left(x\right)$ is a cocycle of degree $k$ is $A$ and  since $H^k(A,d)=0$, we can   homotop $\sigma$  to some map which sends $x$ to $0$. Moreover $\sigma\left(y\right)$ is a cocycle $a\in A$ and replay the generator  $y$ by $y-a$, we can also assume that $\sigma\left(y\right)=0$. Thus we can assume that $\sigma_{\mid {\Lambda\left(x,y\right)}}=0$ which would be the model of the section associated to the map $\psi\colon M\rightarrow S^k$. Therefore 
$ Map\left(M,S^k;\psi\right)$ and $ Map\left(M,S^k;\ast\right)$ have the same cgda model and hence 
 the same rational homotopy type. 
\end{proof}

\begin{proof}[Proof of Theorem~\ref{caseven} and Proposition~\ref{coreven}]
Since $e\left(\tau_M\right)=0$, $\tilde{p}_{\frac{k}{2}}\left(\tau_M\right)=0$, $\forall i\geq 1, p_i\left(\tau_{\mathbb{R}^{m+k}}\right)=0$ and $e\left(\tau_{\mathbb{R}^{m+k}}\right)=0$ by Corollary~\ref{coro1} we have the following rational homotopy equivalence
 \[
 Imm\left(M,\mathbb{R}^{m+k};f\right)\simeq_{\mathbb{Q}} Map\left(M,\mathbb{R}^{m+k};\phi_f^1\right)\times Map\left(M,V_m\left(\mathbb{R}^{m+k}\right);\phi_f^2\right)\simeq_{\mathbb{Q}}Map\left(M,V_m\left(\mathbb{R}^{m+k}\right);\phi_f^2\right) \]
 and  by Proposition~\cite[Proposition~3.1]{pierre77} we have
\[
V_m\left(\mathbb{R}^{m+k}\right)\simeq_{\mathbb{Q}}
\begin{cases}
\prod_{\frac{k}{2}+1\leq\ell\leq \lfloor\frac{m+k-1}{2}\rfloor}K\left(\mathbb{Q},4\ell-1\right)\times K\left(\mathbb{Q},m+k-1\right)\times S^k \text{ if $m$ is even}\\

\prod_{\frac{k}{2}+1\leq\ell\leq \lfloor\frac{m+k-1}{2}\rfloor}K\left(\mathbb{Q},4\ell-1\right)\times S^k \text{ if $m$ is odd}
\end{cases}
\]
The first part of the proof is analogue to the proof of Propoition~\ref{corod}.\\
Assume now the extra hypothesis $H^k\left(M,\mathbb{Q}\right)=0$. Then the theorem follow directly from Lemma~\ref{yvess}
\end{proof}

From the results we deduce the following corollary.
\begin{coro} 
\label{betti}
Let $M$ be a simply connected manifold of dimension $m$ and $k\geq m$ an integer. If $k$ is odd or $e\left(\tau_M\right)=0$, then the rational Betti numbers of  $Imm\left(M,\mathbb{R}^{m+k}\right)$ have polynomial growth.
\end{coro}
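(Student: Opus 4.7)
The plan is to reduce to the explicit component-by-component descriptions already obtained in Proposition~\ref{corod} and Proposition~\ref{coreven}, and then to deduce polynomial growth of Betti numbers from the structure of finite products of Eilenberg--Mac Lane spaces, together with rational ellipticity of the relevant components of $\mathrm{Map}(M,S^k)$ when $k$ is even.

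The first step is to check that the hypotheses of Theorem~\ref{casodd} or Theorem~\ref{caseven} are satisfied. For $k$ odd, since $k\geq m$ and $M$ is simply connected we may assume $k\geq 3$ (the low-dimensional cases $m\leq 2$ force $M$ to be a point, $\mathbb{R}$, or $S^2$, where the conclusion is immediate). For $k$ even, the hypothesis $e(\tau_M)=0$ is given, while the other required hypothesis $\tilde{p}_{k/2}(\tau_M)=0$ is automatic: this class lies in $H^{2k}(M,\mathbb{Q})$, and the inequality $k\geq m$ (with $m\geq 1$) gives $2k\geq 2m>m=\dim M$, so $H^{2k}(M,\mathbb{Q})=0$ for dimension reasons.

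Applying Proposition~\ref{corod} (for $k$ odd) or Proposition~\ref{coreven} (for $k$ even), every connected component of $Imm(M,\mathbb{R}^{m+k})$ is rationally a finite product of Eilenberg--Mac Lane spaces of the form $K(H^{4i-1-q}(M,\mathbb{Q}),q)$ and $K(H^{m+k-1-j}(M,\mathbb{Q}),j)$, together with an additional factor $\mathrm{Map}(M,S^k;\tilde f)$ when $k$ is even. Each factor of the form $K(\mathbb{Q}^n,q)$ with $n$ finite has Poincaré series $(1+t^q)^n$ if $q$ is odd and $(1-t^q)^{-n}$ if $q$ is even; in both cases the coefficients grow polynomially in the cohomological degree. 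The Poincaré series of the finite product is the product of these, a rational function in $t$ with poles only at roots of unity, whose coefficients therefore still grow polynomially.

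The only remaining ingredient, and what I anticipate as the main subtlety, is the mapping-space factor $\mathrm{Map}(M,S^k;\tilde f)$ in the even codimension case. Since $k$ is even, the minimal Sullivan model of $S^k$ is $(\Lambda(x_k,y_{2k-1}),\,dy=x^2)$, so $S^k$ is rationally elliptic with rational homotopy concentrated in degrees $k$ and $2k-1$. Because $M$ is a finite-type simply connected manifold, the Haefliger model of $\mathrm{Map}(M,S^k;\tilde f)$ is built from finitely many generators, and its rational homotopy groups are supported in finitely many degrees (at most $2k-1$), each being finite-dimensional (bounded in terms of the Betti numbers of $M$). Hence every component of $\mathrm{Map}(M,S^k)$ is itself rationally elliptic, so has finite total rational cohomology and in particular bounded Betti numbers. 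Combined with the previous paragraph, this gives polynomial growth of the Betti numbers of each connected component of $Imm(M,\mathbb{R}^{m+k})$, which is the desired conclusion.
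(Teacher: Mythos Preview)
Your approach is the natural one and matches what the paper intends (the paper simply states the corollary as an immediate consequence of Proposition~\ref{corod} and Proposition~\ref{coreven}, without writing out a proof). Your verification that $\tilde p_{k/2}(\tau_M)=0$ is automatic for degree reasons when $k\geq m$, and your treatment of the Eilenberg--Mac~Lane factors via the rational function $\prod(1+t^{m_j})/\prod(1-t^{n_i})$, are both exactly right.

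There is, however, one genuine logical slip in the last paragraph. From ``the Haefliger model has finitely many generators'' you conclude ``hence $\mathrm{Map}(M,S^k;\tilde f)$ is rationally elliptic, so has finite total rational cohomology''. Finite-dimensional rational homotopy does \emph{not} imply rational ellipticity in general: $K(\mathbb{Q},2)$ has one-dimensional $\pi_*\otimes\mathbb{Q}$ but infinite-dimensional cohomology. So this inference, as stated, is unjustified. Fortunately you do not need ellipticity at all: the fact you already established---that the minimal model $(\Lambda W,d)$ of $\mathrm{Map}(M,S^k;\tilde f)$ has $\dim W<\infty$---gives directly
\[
\dim H^n(\Lambda W,d)\;\leq\;\dim(\Lambda W)^n,
\]
and the Poincar\'e series of $\Lambda W$ is the same kind of rational function (poles only at roots of unity) you invoked for the $K(\pi,n)$ factors. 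That already yields polynomial growth of the Betti numbers of this factor, and hence of the product. So the fix is simply to replace the ellipticity sentence by a direct appeal to the same rational-function bound you used two paragraphs earlier.
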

\begin{rmq}
\label{alp}
In \cite{alp} Arone, Lambrechts and Pryor prove that, if $\chi\left(M\right)\leq -2$ and $k\geq {m+1}$ the rational Betti numbers of space of smooth embeddings of $M$ in $\mathbb{R}^{m+k} $ modulo immersions, $\overline{Emb}\left(M ,\mathbb{R}^{m+k}\right)$, have exponential growth. By Corollary~\ref{betti} we deduce that the Betti numbers of smooth embedding $Emb\left(M ,\mathbb{R}^{m+k}\right)$ have exponential growth.
\end{rmq}

\section{Series of the ranks of homotopy groups the connected components of the space of immersions}
\label{computation}
Proposition~\ref{corod} and Proposition~\ref{coreven} show that the rational homotopy type of the space of immersions of a manifold  in an euclidian space with large codimension depends only on the codimension and the rational Betti numbers of the manifold. From this we can obtain an explicit formula for the ranks of  the rational homotopy groups of the component of space of immersions. More precisely, when the codimension is odd we obtain the following result.
\begin{thm}
\label{seriepoincare4}
Let  $M$  be a simply connected manifold closed  of dimension $m$  and $k\geq 3$ an odd integer. If $k\geq \frac{m}{2}+1$ , then:
 \[
 \sum_{i=1}^{\infty}rank\;\pi_i\left( Imm\left(M,\mathbb{R}^{m+k};f\right)\right)x^i
 =
 \begin{cases}
 \begin{aligned}
& \frac{x^{2k-m-1}\left(1-x^{2m+2}\right)}{1-x^4}P_M\left(x\right) &&\text{ if $m$ even}&\\
 &\frac{x^{3k-m-2}\left(1-x^{2m+2}\right)}{1-x^4}\left(P_M\left(x\right)\right)^2& &\text{ if $m$ odd}&
 \end{aligned} 
 \end{cases} 
 \]
 where
 \[
 P_M\left(x\right)=\sum \dim H_i\left(M,\mathbb{Q}\right)x^i \text{ is a  Poincar\'e serie of } M.
 \]
\end{thm}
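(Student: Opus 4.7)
The plan is to derive the formula by directly reading off the rational Poincaré series from the explicit decomposition given by Proposition~\ref{corod}. Since that proposition presents $Imm(M,\mathbb{R}^{m+k};f)$ as a product of Eilenberg-Mac Lane spaces, and since the rational homotopy of $K(V,q)$ is concentrated in degree $q$ with rank $\dim V$, the homotopy Poincaré series of the whole component is simply the sum of the contributions $\dim V\cdot x^q$ over all factors. Writing $b_j:=\dim H^j(M,\mathbb{Q})$, each factor $K(H^{4i-1-q}(M,\mathbb{Q}),q)$ contributes $b_{4i-1-q}\,x^q$.

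For a fixed Stiefel index $i$, the inner sum transforms as
\[
\sum_{q=1}^{4i-1} b_{4i-1-q}\,x^q \;=\; \sum_{j=0}^{4i-2} b_j\,x^{4i-1-j}.
\]
The hypothesis $k\geq \tfrac{m}{2}+1$, together with $i\geq \lceil k/2\rceil$, forces $4i-2\geq 2k>m$, so the truncation $b_j=0$ for $j>m$ makes the sum reduce to $\sum_{j=0}^m b_j x^{4i-1-j}$. Since $M$ is closed and simply connected, hence orientable, Poincaré duality $b_j=b_{m-j}$ and the substitution $j'=m-j$ rewrite this as $x^{4i-1-m}P_M(x)$. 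Summing over the range of $i$ (respectively $[\lceil k/2\rceil,\lfloor (m+k)/2\rfloor]$ when $m$ is even and $[\lceil k/2\rceil,\lfloor(m+k-1)/2\rfloor]$ when $m$ is odd) gives a geometric progression of common ratio $x^4$, whose closed form is an expression of the shape $\dfrac{x^{\alpha}(1-x^{\beta})}{1-x^4}\,P_M(x)$.

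When $m$ is odd, the auxiliary factor $A=\prod_{j=1}^{m+k-1}K(H^{m+k-1-j}(M,\mathbb{Q}),j)$ contributes, by exactly the same Poincaré-duality rewriting of $\sum_{j=1}^{m+k-1} b_{m+k-1-j}x^j$, an additional term $x^{k-1}P_M(x)$. The last step is then to combine the main geometric sum with this term over the common denominator $1-x^4$ and recognize the resulting polynomial as the claimed numerator; this is where the paper's squared factor $(P_M(x))^2$ must emerge from the algebraic combination of the two Poincaré-dual pieces.

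The key conceptual points (reduction to Eilenberg-Mac Lane summands, application of Poincaré duality, geometric summation) are short; the main obstacle is purely bookkeeping. One must verify the correct endpoints of the index intervals, carefully track the floors and ceilings depending on the parities of $m$ and $k$, and match the algebraic closed form of the sum with the stated numerator and denominator. The hypothesis $k\geq \tfrac{m}{2}+1$ is precisely the threshold at which Poincaré duality may be applied uniformly across \emph{every} index $i$ appearing in the product, which is what makes the closed-form formula hold without correction terms at the low end.
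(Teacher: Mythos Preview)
Your approach mirrors the paper's: start from the Eilenberg--Mac Lane decomposition of Proposition~\ref{corod}, read off the contribution $b_{4i-1-q}\,x^{q}$ of each factor, use Poincar\'e duality together with the hypothesis $k\ge m/2+1$ to convert each inner sum into $x^{4i-1-m}P_M(x)$, and then sum the resulting geometric progression in $x^{4}$. For the case $m$ even this is line for line what the paper does.

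For $m$ odd your last paragraph hides a genuine gap. You correctly say that the extra block $A$ contributes an \emph{additive} term $x^{k-1}P_M(x)$, since homotopy groups of a product split as a direct sum. You then assert that combining this with the main geometric sum ``over the common denominator $1-x^{4}$'' will produce the stated factor $\bigl(P_M(x)\bigr)^{2}$. That cannot happen: both the main sum and the $A$-contribution are of the form $(\text{rational function of }x)\cdot P_M(x)$, so their sum is again linear in $P_M(x)$, never a multiple of $P_M(x)^{2}$ for a general closed $M$. (Compare total ranks: setting $x=1$, the left side grows linearly in $\sum_j b_j(M)$ while the stated right side grows quadratically.) The paper itself reaches the squared factor by writing ``$\times K$'' with $K=x^{k-1}P_M(x)$, i.e.\ by \emph{multiplying} rather than adding the two contributions; that is not the correct operation for the homotopy Poincar\'e series of a product of spaces. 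Your argument is sound up to the point where you attempt to recover the displayed $m$-odd formula; the honest outcome of the additive bookkeeping you describe is a single factor of $P_M(x)$, and the discrepancy lies in the stated formula rather than in your method.
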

\begin{proof}
 By Proposition~\ref{corod} if  $f\colon M\looparrowright \mathbb{R}^{m+k}$ is an  immersion we have
 \[
Imm\left(M ,\mathbb{R}^{m+k};f\right)\simeq_{\mathbb{Q}}
\begin{cases}
\begin{aligned}
&\prod_{\lceil\frac{k}{2}\rceil\leq i\leq \lfloor\frac{m+k-1}{2}\rfloor}\prod_{1\leq q\leq 4i-1}K\left(H^{4i-1-q}\left(M,\mathbb{Q}\right),q\right) &&\text{ if }& m \text{ is even}\\
&\prod_{\lceil\frac{k}{2}\rceil\leq i\leq\lfloor \frac{m+k-1}{2}\rfloor}\prod_{1\leq q\leq 4i-1}K\left(H^{4i-1-q}\left(M,\mathbb{Q}\right),q\right)\times {K}&&\text{ if } &m \text{ is odd} ,
\end{aligned}
\end{cases}
\]
where
\begin{eqnarray*}
A={\prod_{1\leq j\leq m+k-1}K\left(H^{m+k-1-j}\left(M,\mathbb{Q}\right),j\right)}.
\end{eqnarray*}
Consequently 
\begin{itemize}
\item{ if $m$ is even} we have
\begin{eqnarray*}
 \sum_{i=1}^{\infty}rank\;\pi_i\left( Imm\left(M,\mathbb{R}^{m+k};f\right)\right)x^i =\sum_{i=\frac{k}{2}}^{\frac{m+k-1}{2}}\sum_{q=1}^{4i-1}b_{4i-1-q}\left(M\right)x^q.
 \end{eqnarray*}
Pose $q'=4i-1-q$, then we obtain 
 \begin{eqnarray*}
 \sum_{i=1}^{\infty}rank\;\pi_i\left( Imm\left(M,\mathbb{R}^{m+k},f\right)\right)x^i &=&\sum_{i=\frac{k}{2}}^{\frac{m+k-1}{2}}\sum_{q'=4i-2}^{0}b_{q'}\left(M\right)x^{4i-1-q'}\\
 &=&\sum_{i=\frac{k}{2}}^{\frac{m+k-1}{2}}x^{4i-1}\sum_{q'=4i-2}^{0}b_{q'}\left(M\right)x^{-q'}
  \end{eqnarray*}
 Pose $q"=m-q'$, then
  \begin{eqnarray*}
  \sum_{i=1}^{\infty}rank\;\pi_i\left( Imm\left(M,\mathbb{R}^{m+k};f\right)\right)x^i =\sum_{i=\frac{k}{2}}^{\frac{m+k-1}{2}}x^{4i-1-m}\sum_{q''=m-4i+2}^{m}b_{m-q''}\left(M\right)x^{q''}.
  \end{eqnarray*}
By Poincar\'e duality  we have 
\[
b_{m-i}\left(M\right)=b_i\left(M\right)
\]
consequently
  \begin{eqnarray*}
   \sum_{i=1}^{\infty}rank\;\pi_i\left( Imm\left(M,\mathbb{R}^{m+k};f\right)\right)x^i =\sum_{i=\frac{k}{2}}^{\frac{m+k-1}{2}}x^{4i-1-m}\sum_{q''=m-4i+2}^{m}b_{q''}\left(M\right)x^{q''}.
  \end{eqnarray*}
  On the other hand , since $\forall i<0, b_i\left(M\right)=0$ then
  \[
  \forall i\geq \frac{m+2}{4}, \text{ we have } \sum_{q''=m-4i+2}^{m}b_{q''}\left(M\right)x^{q''}=\sum_{q''=0}^{m}b_{q''}\left(M\right)x^{q''}.
    \]
 Since $k\geq \frac{m}{2}+1$ (by hypothese ) we have:
  \begin{eqnarray*}
  \sum_{i=\frac{k}{2}}^{\frac{m+k-1}{2}}x^{4i-1-m}\sum_{q''=m-4i+2}^{m}b_{q''}\left(M\right)x^{q''}=\sum_{i=\frac{k}{2}}^{\frac{m+k-1}{2}}x^{4i-1-m}\sum_{q''=0}^{m}b_{q''}\left(M\right)x^{q''}.
    \end{eqnarray*}
Then
    \begin{eqnarray*}
     \sum_{i=1}^{\infty}rank\;\pi_i\left( Imm\left(M,\mathbb{R}^{m+k};f\right)\right)x^i={\left(1+x^4+\cdots+x^{2m-2}\right)}{x^{2k-m-1}}Betti_M\left(x\right).
     \end{eqnarray*}
   To finish, remark that
    \begin{eqnarray*}
    1+x^4+\cdots x^{2m-2}=\frac{1-x^{2m+2}}{1-x^4}.
    \end{eqnarray*}
    
\item{ If $m$ is odd}, we have
  \begin{eqnarray*}
  \sum_{i=1}^{\infty}rank\;\pi_i\left( Imm\left(M,\mathbb{R}^{m+k};f\right)\right)x^i =\sum_{i=\frac{k}{2}}^{\frac{m+k-1}{2}}x^{4i-1-m}\sum_{q''=0}^{m}b_{m-q''}\left(M\right)x^{q''}\times{K},
 \end{eqnarray*}
 where 
 \begin{eqnarray*}
 K=x^{k-1}\sum_{j=-k+2}^{m}b_j\left(M\right)x^j=x^{k-1}\sum_{j=0}^{m}b_j\left(M\right)x^j ;\text{ since }k\geq 3.
 \end{eqnarray*}
  \end{itemize} 
  The  rest  of the proof is the same as in the case  $m$ even.
\end{proof}
When the codimension is  even  we have the following result.
\begin{thm}
\label{seriepoincare2}
Let $M$  be a simply connected manifold closed  of dimension $m$  and $k\geq 2$ an even integer. Suppose that $k\geq\frac{m}{2}+1$
. Then, if $H^k\left(M,\mathbb{Q}\right)=0$ and $e\left(\tau_M\right)=0$ then
\begin{eqnarray*}
 \sum_{i=1}^{\infty}rank\; \pi_i\left( Imm\left(M,\mathbb{R}^{m+k};f\right)\right)x^i
  =
  \begin{cases}
  \begin{aligned}
  &\frac{x^{2k-m-1}\left(1-x^{2m}\right)}{1-x^4}P_M\left(x\right)\times R\left(x\right)&& \text{ if $m$ is odd}&\\
  &\frac{x^{3k-m-2}\left(1-x^{2m}\right)}{1-x^4}\left(P_M\left(x\right)\right)^2\times R\left(x\right)&& \text{ if $m$ is even},&
  \end{aligned}
    \end{cases}
\end{eqnarray*}
where
\[
P_M\left(x\right)= \sum \dim H_i\left(M,\mathbb{Q}\right)x^i  \text{ and } R\left(x\right)=\sum_{i}\left(\dim H^{k-i}\left(M,\mathbb{Q}\right)-\dim H^{2k-i-1}\left(M,\mathbb{Q}\right)\right)x^i.
\]
\end{thm}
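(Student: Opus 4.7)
The plan is to closely follow the argument of Theorem~\ref{seriepoincare4}, the only genuinely new ingredient being an additional $Map(M,S^k;\tilde{f})$ factor that reflects the fact that, for even $k$, the Stiefel manifold $V_m(\mathbb{R}^{m+k})$ has a rational factor of $S^k$ rather than being a pure product of Eilenberg--Mac Lane spaces. The strategy is therefore: reduce to a rational product by Proposition~\ref{coreven}, simplify the spherical mapping-space factor using Lemma~\ref{yvess}, and then compute the Poincar\'e series of rational homotopy groups factor by factor.

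First, I would verify the hypotheses of Proposition~\ref{coreven}. The assumption $e(\tau_M)=0$ is explicit; for the second hypothesis $\tilde{p}_{k/2}(\tau_M)=0$, note that $\tilde{p}_{k/2}(\tau_M)\in H^{2k}(M,\mathbb{Q})$ and $k\geq \tfrac{m}{2}+1$ gives $2k\geq m+2>m$, so $H^{2k}(M,\mathbb{Q})=0$ for dimensional reasons. Proposition~\ref{coreven} then decomposes $Imm(M,\mathbb{R}^{m+k};f)$ rationally as a product of Eilenberg--Mac Lane spaces with $Map(M,S^k;\tilde{f})$, plus an extra Eilenberg--Mac Lane factor $A$ when $m$ is even. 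Since $H^k(M,\mathbb{Q})=0$, Lemma~\ref{yvess} yields $Map(M,S^k;\tilde{f})\simeq_{\mathbb{Q}} Map(M,S^k;\ast)$, so no dependence on the homotopy class $\tilde{f}$ remains.

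Next I would add up the Poincar\'e series. Each Eilenberg--Mac Lane factor $K(H^{4i-1-q}(M,\mathbb{Q}),q)$ contributes $b_{4i-1-q}(M)\,x^q$, so the Eilenberg--Mac Lane product contributes $\sum_i\sum_q b_{4i-1-q}(M)\,x^q$. Reindexing by $j=4i-1-q$ and invoking Poincar\'e duality $b_j(M)=b_{m-j}(M)$ (valid since $M$ is simply connected and closed, hence orientable), the inner sum collapses to $x^{-m}P_M(x)$ whenever its upper limit $4i-2$ exceeds $m$; the hypothesis $k\geq \tfrac{m}{2}+1$ guarantees this for every $i$ in the range, and the remaining geometric series in $x^4$ sums to a factor of the form $(1-x^{2m})/(1-x^4)$ up to an overall power of $x$. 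This is the same telescoping as in the proof of Theorem~\ref{seriepoincare4}. The factor $A$ that appears when $m$ is even contributes an additional $x^{k-1}P_M(x)$ by an entirely analogous computation, which accounts for the upgrade from $P_M(x)$ to $(P_M(x))^2$ in that case.

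The one genuinely new computation, and what I expect to be the main obstacle, is the Poincar\'e series of $\pi_\ast(Map(M,S^k;\ast))\otimes \mathbb{Q}$, which must be identified with $R(x)$. I would use the Sullivan minimal model $(\Lambda(x,y),\,dy=x^2)$ of $S^k$, with $|x|=k$ and $|y|=2k-1$, together with the Haefliger/Brown--Szczarba model of mapping spaces, to construct a Sullivan model of $Map(M,S^k;\ast)$ whose generators are of the form $x\otimes \alpha^{\vee}$ and $y\otimes \alpha^{\vee}$ for $\alpha$ running through a graded basis of $H^\ast(M,\mathbb{Q})$, placed in degree $|v|-|\alpha|$. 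The hypothesis $H^k(M,\mathbb{Q})=0$ is crucial here: it ensures that no generator $x\otimes \alpha^{\vee}$ lands in degree zero, so that no truncation is required when passing to the component of the constant map and the resulting differential is purely quadratic (hence the model is already minimal). Counting generators in each degree, while tracking the interaction between the differential $dy=x^2$ and the dual of the cup product on $H^\ast(M,\mathbb{Q})$, produces the series $R(x)$. Multiplying this contribution by the Eilenberg--Mac Lane contributions computed above yields the two claimed formulas according to the parity of $m$.
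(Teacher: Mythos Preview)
The paper does not actually supply a proof of Theorem~\ref{seriepoincare2}: the statement is the last item before the bibliography and no argument is given. Your overall strategy---invoke Proposition~\ref{coreven}, replace $Map(M,S^k;\tilde f)$ by $Map(M,S^k;\ast)$ via Lemma~\ref{yvess}, and then imitate the computation in the proof of Theorem~\ref{seriepoincare4}---is exactly the route the paper's architecture sets up, so in that sense your plan is the intended one.

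There is, however, a genuine gap in the final step. You write that ``multiplying this contribution by the Eilenberg--Mac Lane contributions'' yields the claimed formulas, and that the extra factor $A$ ``accounts for the upgrade from $P_M(x)$ to $(P_M(x))^2$''. But for a product of spaces $X\times Y$ one has $\pi_i(X\times Y)\cong\pi_i(X)\oplus\pi_i(Y)$, so the generating series of ranks of homotopy groups \emph{add}, they do not multiply. (The paper's own proof of Theorem~\ref{seriepoincare4} in the $m$ odd case already contains this confusion, so the printed formulas themselves seem to be in error; you have inherited rather than introduced the problem.) A related issue arises in your identification of $R(x)$. Your Haefliger-model argument is essentially correct---though note that truncation \emph{is} needed for generators $x\otimes\alpha^\vee$ with $|\alpha|>k$, not only $|\alpha|=k$; for the constant section these are set to $0$ and no linear differentials appear, so minimality survives. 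But then counting generators gives
\[
\sum_{i\ge 1}\bigl(\dim H^{k-i}(M,\mathbb{Q})+\dim H^{2k-1-i}(M,\mathbb{Q})\bigr)x^i,
\]
with a \emph{plus} sign, whereas the paper's $R(x)$ carries a minus sign. So while your method is the right one, a correct execution of it cannot produce the formula as stated; the discrepancy lies in the target, not in your approach.
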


\bibliographystyle{plain}
\bibliography{mabiblio}

\textsf{Universit\'e catholique de Louvain, Chemin du Cyclotron 2, B-1348 Louvain-la-Neuve, Belgique\\
Institut de Recherche en Math\'ematique et Physique\\}
\textit{E-mail address: abdoul.yacouba@uclouvain.be}
\end{document}